\renewcommand{\radius}{\rho}
\begin{document}
\title{Diophantine approximation in Banach spaces}

\authorlior\authordavid\authormariusz

\begin{abstract}
In this paper, we extend the theory of simultaneous Diophantine approximation to infinite dimensions. Moreover, we discuss Dirichlet-type theorems in a very general framework and define what it means for such a theorem to be optimal. We show that optimality is implied by but does not imply the existence of badly approximable points.
\end{abstract}

\maketitle

\section{Introduction}

\begin{definition}
\label{definitiondiophantinespace}
A \emph{Diophantine space} is a triple $(X,\QQ,H)$, where $X$ is a complete metric space, $\QQ$ is a dense subset of $X$, and $H:\QQ\to(0,+\infty)$.
\end{definition}

The prototypical example is the triple $(\R^d,\Q^d,H_\std)$, where $H_\std$ is the standard height function on $\Q^d$, i.e. $H_\std(\pp/q) = q$ assuming that $\gcd(p_1,\ldots,p_d,q) = 1$. Other (mostly implicit) examples may be found in \cite{CCM, DodsonEveritt, FKMS, FishmanSimmons1, FishmanSimmons3, FSU4, Kristensen} and the references therein.

This paper has two goals. The first is to clarify the theory of Dirichlet-type theorems on an abstract Diophantine space. Until now, it seems that there is no generally accepted definition of what it means for a Dirichlet-type theorem to be optimal; in each case where a Dirichlet-type theorem is proved, its optimality is demonstrated by producing points which are badly approximable with respect to the approximation function of the Dirichlet-type theorem. However, in Section \ref{sectionoptimality} we make a case for a wider notion of optimality, which is implied by but does not imply the existence of badly approximable points.

The second goal of this paper is to provide a complete theory of Diophantine approximation in the Diophantine space $(X,\Q\Lambda,H_\std)$, where $X$ is a Banach space, $\Lambda\leq X$ is a lattice, and $H_\std$ is the standard height function on $\Q\Lambda$ (precise definitions given below). This is related to the first goal since it turns out that when $\Lambda$ is a non-cobounded lattice, the optimal Dirichlet function of $(X,\Q\Lambda,H_\std)$ does not possess badly approximable points. Thus the theory of Diophantine approximation in Banach spaces gives a natural example of optimality failing to imply the existence of badly approximable points, justifying the clarification made in the first part.

{\bf Convention 1.} In the introduction, propositions which are proven later in the paper will be numbered according to the section they are proven in. Propositions numbered as 1.\# are either straightforward, proven in the introduction, or quoted from the literature.

{\bf Convention 2.} $x_n\tendsto n x$ means $x_n\to x$ as $n\to +\infty$.

{\bf Convention 3.} $\HD(S)$ is the Hausdorff dimension of a set $S$. $\HH^f(S)$ is the Hausdorff $f$-measure of a set $S$.

{\bf Acknowledgements.} The first-named author was supported in part by the Simons Foundation grant \#245708. The third-named author was supported in part by the NSF grant DMS-1001874. The authors thank Erez Nesharim for pointing out some typos which appeared in the published version and have been corrected here.

\newpage

\subsection{Dirichlet-type theorems on Diophantine spaces}
\label{subsectiondirichlettype}
\begin{definition}
A \emph{Dirichlet-type theorem} on a Diophantine space $(X,\QQ,H)$ is a true statement of the form
\[
\all\xx\in X \;\;\exists C_\xx > 0\;\; \exists (\rr_n)_1^\infty\text{ in $\QQ$} \text{ such that } \begin{cases} \rr_n\tendsto n \xx \text{ and}\\  \dist(\xx,\rr_n)\leq C_\xx \psi\circ H(\rr_n) \all n\in\N\end{cases},
\]
where $\psi:(0,+\infty)\to(0,+\infty)$. The function $\psi$ is called a \emph{Dirichlet function}. If the constant $C_\xx$ can be chosen to be independent of $\xx$, then the function $\psi$ is called \emph{uniformly Dirichlet}.\footnote{Most known Dirichlet functions are uniformly Dirichlet; however, a Dirichlet function which is not uniformly Dirichlet is given in \cite[Theorem 8.1]{FKMS}.}
\end{definition}

The prototypical example is Dirichlet's theorem, which states that for the Diophantine space $(\R^d,\Q^d,H_\std)$, the function $\psi(q) = q^{-(1 + 1/d)}$ is uniformly Dirichlet (the constant $C_\xx$ is $1$ for every $\xx\in\R^d$).

Dirichlet-type theorems are common in treatments of various Diophantine spaces; cf. the references given above. However, a Dirichlet-type theorem is usually not considered important unless it is \emph{optimal}, or unable to be improved by more than a constant factor. The optimality of a Dirichlet function is usually established by demonstrating the existence of \emph{badly approximable} points.

\begin{definition}
\label{definitionbadlyapproximable}
Let $(X,\QQ,H)$ be a Diophantine space, and let $\psi:(0,+\infty)\to(0,+\infty)$. A point $\xx\in X$ is said to be \emph{badly approximable} with respect to $\psi$ if
\begin{equation}
\label{badlyapproximable}
\exists \varepsilon > 0 \all \rr\in\QQ \;\; \dist(\rr,\xx) \geq \varepsilon\psi\circ H(\rr).
\end{equation}
The set of points in $X$ which are badly approximable with respect to $\psi$ will be denoted $\BA_\psi$, and its complement will be denoted $\WA_\psi$.
\end{definition}

The intuitive reason that the existence of badly approximable points implies optimality is that ``if there were a Dirichlet-type theorem which improved the Dirichlet-type theorem corresponding to $\psi$ by more than a constant, it would contradict the existence of badly approximable points''. We can make this intuition into a theorem, specifically the following theorem:

\begin{reptheorem}{theoremBAimpliesoptimality}
Let $(X,\QQ,H)$ be a Diophantine space. If $\psi:(0,+\infty)\to(0,+\infty)$ is any nonincreasing function and if $\BA_\psi\neq\emptyset$ and if $\phi:(0,+\infty)\to(0,+\infty)$ satisfies
\begin{repequation}{phipsi}
\frac{\phi}{\psi}\to 0,
\end{repequation}
then $\phi$ is not a Dirichlet function.
\end{reptheorem}

In this paper, we take the point of view that the conclusion of Theorem \ref{theoremBAimpliesoptimality} rather than its hypothesis is the true definition of optimality of a Dirichlet function in a Diophantine space. In other words, a Dirichlet function $\psi:(0,+\infty)\to(0,+\infty)$ is \emph{optimal} if there is no Dirichlet function $\phi:(0,+\infty)\to(0,+\infty)$ satisfying \eqref{phipsi}. The inequivalence of optimality and the existence of badly approximable points will be demonstrated in Theorem \ref{theoremdirichletnoncobounded} below. However, their equivalence in the case where $X$ is $\sigma$-compact will be demonstrated in Proposition \ref{propositionconverse}.

One could also conceivably define a Dirichlet-type theorem to be optimal if it implies all other Dirichlet-type theorems. This notion will be made rigorous in Section \ref{sectionoptimality}; however, it turns out to be too strong, and even in $(\R,\Q,H_\std)$ there are no Dirichlet functions which satisfy this strong notion of optimality. However, the notion can be refined by requiring that $\psi$ and $\phi$ lie in a Hardy field (see \sectionsymbol\ref{subsectionhardyfields}); in this case, the notion turns out to be equivalent to the notion of optimality defined above.

\subsection{The four main questions in Diophantine approximation}
\label{subsectiondiophantinequestions}
Given any Diophantine space $(X,\QQ,H)$, we will be interested in the following questions:

\begin{itemize}
\item[1.] (Dirichlet-type theorem) Find an optimal Dirichlet function for the Diophantine space. Is the set of badly approximable points for this Dirichlet function nonempty?
\item[2.] (Jarn\'ik--Schmidt type theorem) Given $\psi:(0,+\infty)\to(0,+\infty)$, what is the Hausdorff dimension of $\BA_\psi$?
\item[3.] (Jarn\'ik--Besicovitch type theorem) Given $\psi:(0,+\infty)\to(0,+\infty)$, what is the Hausdorff dimension of $\WA_\psi$?
\item[4.] (Khinchin-type theorem) Given $\psi:(0,+\infty)\to(0,+\infty)$, what are the measures of $\BA_\psi$ and $\WA_\psi$?
\end{itemize}

Note that the last question assumes the existence of a natural measure on the space $X$.

We will usually be satisfied if questions 2-4 can be answered for functions $\psi$ satisfying reasonable hypotheses, e.g. for $\psi$ in a Hardy field (see \sectionsymbol\ref{subsectionhardyfields}).

\begin{remark}
One can also ask whether $\BA_\psi$ or $\WA_\psi$ is generic in a topological sense, i.e. comeager. However, the question is trivial, as shown by the following proposition:
\end{remark}

\begin{proposition}
Let $(X,\QQ,H)$ be a Diophantine space. Then for any function $\psi:(0,+\infty)\to(0,+\infty)$, $\WA_\psi$ is comeager.
\end{proposition}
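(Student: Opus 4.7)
The plan is to write $\BA_\psi$ as a countable union of closed nowhere dense sets, so that its complement $\WA_\psi$ is comeager by definition. Specifically, for each $\varepsilon > 0$ let
\[
\BA_\psi^{(\varepsilon)} = \{\xx \in X : \dist(\rr,\xx) \geq \varepsilon \psi\circ H(\rr) \text{ for all } \rr\in\QQ\},
\]
so that $\BA_\psi = \bigcup_{n=1}^\infty \BA_\psi^{(1/n)}$ by unpacking Definition \ref{definitionbadlyapproximable}. Each $\BA_\psi^{(\varepsilon)}$ is an intersection (over $\rr\in\QQ$) of closed half-spaces $\{\xx : \dist(\rr,\xx)\geq \varepsilon\psi\circ H(\rr)\}$, so it is closed.

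For nowhere density, the observation is simply that for any $\rr\in\QQ$, the open ball $B(\rr,\varepsilon\psi\circ H(\rr))$ has positive radius (since $\psi>0$) and is disjoint from $\BA_\psi^{(\varepsilon)}$. Therefore
\[
U_\varepsilon := \bigcup_{\rr\in\QQ} B\bigl(\rr,\varepsilon\psi\circ H(\rr)\bigr)
\]
is an open set disjoint from $\BA_\psi^{(\varepsilon)}$, and $U_\varepsilon$ is dense because it contains the dense set $\QQ$. Hence $\BA_\psi^{(\varepsilon)}\subseteq X\setminus U_\varepsilon$, which is closed with empty interior, so $\BA_\psi^{(\varepsilon)}$ is nowhere dense.

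Taking a countable union over $\varepsilon = 1/n$ shows $\BA_\psi$ is meager, whence $\WA_\psi$ is comeager. There is no real obstacle here: completeness of $X$ is not even used for the statement as phrased (it is only needed if one wants to upgrade ``comeager'' to ``dense'' via the Baire category theorem), and the argument exploits nothing beyond the density of $\QQ$ and the positivity of $\psi$.
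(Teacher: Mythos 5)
Your argument is correct and is essentially the paper's proof read through De Morgan: the paper writes $\WA_\psi = \bigcap_{n=1}^\infty \bigcup_{\rr\in\QQ} B\left(\rr,\frac{1}{n}\psi\circ H(\rr)\right)$ directly as a countable intersection of open dense sets, and your sets $X\setminus U_{1/n}$ are exactly the sets $\BA_\psi^{(1/n)}$ (the containment you state is in fact an equality). The only quibble is terminological: the sets $\{\xx : \dist(\rr,\xx)\geq \varepsilon\psi\circ H(\rr)\}$ are complements of open balls in a metric space, not ``half-spaces,'' but they are closed, which is all you use.
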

\begin{proof}
By writing
\[
\WA_\psi = \bigcap_{n = 1}^\infty \bigcup_{\rr\in\QQ} B\left(\rr,\frac{1}{n}\psi\circ H(\rr)\right),
\]
we see that $\WA_\psi$ is the intersection of countably many open dense sets.
\end{proof}

\begin{remark}
An example of a Diophantine space with no (reasonable) optimal Dirichlet function is given in \cite[Theorem 1.3]{FishmanSimmons3}. Even if an optimal Dirichlet function exists, we should not expect it to be unique without additional constraints; cf. Remark \ref{remarkuncountablymany}.
\end{remark}


\ignore{
\subsection{Diophantine isomorphisms}
\begin{definition}
A \emph{Diophantine isomorphism} between two Diophantine spaces $(X_1,\QQ_1,H_1)$ and $(X_2,\QQ_2,H_2)$ is a bi-Lipschitz map $f:X_1\to X_2$ satisfying $f(\QQ_1) = \QQ_2$ and
\[
H_1 \asymp_\times H_2\circ f.
\]
\end{definition}
We would like to take this opportunity to clarify the relationship between Diophantine isomorphisms and Diophantine theorems.

\begin{definition}
A function $\psi:(0,+\infty)\to(0,+\infty)$ is \emph{regular} if for every $C_1 > 0$ there exists $C_2 > 0$ such that for every $x,y\in(0,+\infty)$,
\[
|\log(x/y)| \leq C_1 \;\;\Rightarrow\;\; |\log(\psi(x)/\psi(y))| \leq C_2.
\]
\end{definition}

\begin{proposition}
Let $(X_1,\QQ_1,H_1)$ and $(X_2,\QQ_2,H_2)$ be Diophantine spaces and let $f:X_1\to X_2$ be a Diophantine isomorphism. If $\psi:(0,+\infty)\to(0,+\infty)$ is regular, then
\begin{itemize}
\item[(i)] $\psi$ is (optimal) (uniformly) Dirichlet for $(X_1,\QQ_1,H_1)$ if and only if $\psi$ is (optimal) (uniformly) Dirichlet for $(X_2,\QQ_2,H_2)$, and
\item[(ii)] $f(\BA_\psi(X_1,\QQ_1,H_1)) = \BA_\psi(X_2,\QQ_2,H_2)$.
\end{itemize}
\end{proposition}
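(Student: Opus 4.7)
The plan is to translate each of the three defining ingredients of a Diophantine isomorphism into a multiplicative comparison, then read off each clause of the proposition from these comparisons. Fix $\xx_1 \in X_1$ and $\rr \in \QQ_1$, and set $\xx_2 = f(\xx_1) \in X_2$, $\rr' = f(\rr) \in \QQ_2$. The bi-Lipschitz condition yields $\dist(\xx_2, \rr') \asymp_\times \dist(\xx_1, \rr)$; the height condition $H_1 \asymp_\times H_2 \circ f$ yields $H_1(\rr) \asymp_\times H_2(\rr')$; and regularity of $\psi$, applied to the log-bounded ratio $H_1(\rr)/H_2(\rr')$, upgrades this to $\psi\circ H_1(\rr) \asymp_\times \psi\circ H_2(\rr')$. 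Because $f$ is a homeomorphism with $f(\QQ_1) = \QQ_2$, a sequence in $\QQ_1$ converges to $\xx_1$ if and only if its image in $\QQ_2$ converges to $\xx_2$.

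Part (ii) is then immediate: the defining inequality \eqref{badlyapproximable} at $\xx_1$ becomes, after multiplying by the three comparison constants, the same inequality at $\xx_2$ with a modified $\varepsilon$, and vice versa. For the (uniformly) Dirichlet half of (i), given a Dirichlet sequence $(\rr_n)$ at $\xx_1$ with constant $C_{\xx_1}$, the sequence $(f(\rr_n))$ is a Dirichlet sequence at $\xx_2$ with a constant that is a fixed multiple of $C_{\xx_1}$; the multiplier depends only on $f$ and $\psi$, so uniformity is preserved, and the reverse direction is symmetric because $f^{-1}$ is also a Diophantine isomorphism.

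The optimality clause is more delicate because the competing Dirichlet functions $\phi$ in the definition of optimality are not assumed to be regular, so the previous transfer does not apply directly. Suppose $\psi$ is optimal for $(X_1,\QQ_1,H_1)$ but that some $\phi$ with $\phi/\psi \to 0$ is Dirichlet for $(X_2,\QQ_2,H_2)$; fix $C_0 \geq 1$ with $H_2(f(\rr)) \in [H_1(\rr)/C_0,\, C_0 H_1(\rr)]$ for every $\rr \in \QQ_1$, and define the regularization
\[
\tilde\phi(t) = \sup\{\phi(s) : s \in [t/C_0,\, C_0 t]\}
\]
(truncating $\phi$ at a large constant beforehand, which ensures local boundedness without disturbing the Dirichlet property along any converging sequence). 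Pulling back a Dirichlet sequence at $\xx_2 = f(\xx_1)$ via $f^{-1}$ yields $\rr_n \in \QQ_1$ with $\dist(\xx_1, \rr_n)$ bounded by a constant times $\phi\circ H_2(f(\rr_n))$, which is in turn at most $\tilde\phi\circ H_1(\rr_n)$ by definition; thus $\tilde\phi$ is Dirichlet on $X_1$. Regularity of $\psi$ forces $\psi(s) \asymp_\times \psi(t)$ whenever $s \in [t/C_0, C_0 t]$, so $\tilde\phi(t)/\psi(t)$ is comparable to $\sup\{\phi(s)/\psi(s) : s \in [t/C_0, C_0 t]\}$, which tends to $0$ as $t \to \infty$. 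This contradicts the optimality of $\psi$ on $X_1$, and the reverse implication is symmetric. This last step is the main obstacle: manufacturing a competing Dirichlet function on one space from a possibly non-regular competitor on the other, achieved here by the supremum regularization $\tilde\phi$ using regularity of $\psi$ (not of $\phi$).
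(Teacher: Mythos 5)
Your proof is correct. Note that the paper only \emph{states} this proposition (in a passage that was ultimately commented out of the source) and supplies no proof of its own, so there is no argument of the authors' to compare against; your write-up stands on its own. The first two paragraphs are routine transfers via the three comparisons $\dist(f(\xx),f(\rr))\asymp_\times\dist(\xx,\rr)$, $H_1\asymp_\times H_2\circ f$, and (by regularity) $\psi\circ H_1\asymp_\times\psi\circ H_2\circ f$, and you correctly observe that the only delicate point is the optimality clause, where the competing function $\phi$ need not be regular. Your sup-regularization $\tilde\phi(t)=\sup\{\phi(s):s\in[t/C_0,C_0t]\}$ handles this: the parenthetical truncation remark is genuinely needed (otherwise $\tilde\phi$ could take the value $+\infty$ at small arguments if $\phi$ is locally unbounded), and it is justified because a Dirichlet sequence converges, so its distances tend to $0$ and eventually fall below $C_\xx M$, whence $\min(\phi,M)$ remains Dirichlet after discarding finitely many terms; and regularity of $\psi$ (not of $\phi$) is exactly what converts $\phi/\psi\to0$ into $\tilde\phi/\psi\to0$. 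One could shorten the argument slightly by truncating with $\min(\phi,\psi)$ or by noting $f^{-1}$ is itself a Diophantine isomorphism so only one direction of each equivalence needs proving, but these are cosmetic; the proof as written is complete.
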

}

\subsection{Diophantine approximation in Banach spaces}
\label{subsectionbanachdiophantine}
\begin{definition}
Let $X$ be a Banach space. A \emph{lattice} in $X$ is a subgroup $\Lambda\leq X$ such that
\begin{itemize}
\item[(I)] $\Lambda$ is (topologically) discrete, or equivalently,
\[
\varepsilon_\Lambda := \min_{\pp\in\Lambda\butnot\{\0\}}\|\pp\| > 0, \text{ and}
\]
\item[(II)] $\R\Lambda$ is dense in $X$, or equivalently, no proper closed subspace of $X$ contains $\Lambda$.
\end{itemize}
If $\Lambda\leq X$ is a lattice, the \emph{standard height function} $H_\std:\Q\Lambda\to \N$ is the function
\[
H_\std(\rr) = \min\{q\in\N: q\rr\in \Lambda\},
\]
i.e. $H_\std(\pp/q) = q$ if $\pp/q$ is in reduced form.
\end{definition}
\begin{remark}
Suppose that $X$ is separable. Then for a closed subgroup $\Lambda\leq X$, the following are equivalent (see \cite[Theorem 1.1]{ADG}):
\begin{itemize}
\item[(A)] $\Lambda$ is discrete,
\item[(B)] $\Lambda$ is locally compact and does not contain any one-dimensional subspace of $X$,
\item[(C)] $\Lambda$ is countable,
\item[(D)] $\Lambda$ is isomorphic to a (finite or infinite) direct sum of copies of $\Z$,
\item[(E)] $\Lambda$ is a free abelian group.
\end{itemize}
\end{remark}
Clearly, if $\Lambda\leq X$ is a lattice then $(X,\Q\Lambda,H_\std)$ is a Diophantine space. If $X = \R^d$ and $\Lambda = \Z^d$, then this Diophantine space is just the usual space $(\R^d,\Q^d,H_\std)$ studied in simultaneous Diophantine approximation. This example generalizes to infinite dimension in several different ways:

\begin{example}
\label{examplelp}
Fix $1\leq p < \infty$. Then $\Z^\infty := \{\pp\in\Z^\N: p_i = 0\text{ for all but finitely many $i\in\N$}\}$ is a lattice in $\ell^p(\N)$.
\end{example}

\begin{remark}
In Example \ref{examplelp}, we are \emph{not} approximating a point $\xx\in\ell^p(\N)$ by an arbitrary rational point $\rr\in\Q^\N\cap\ell^p(\N)$; rather, we are only approximating $\xx$ by those rational points with only finitely many nonzero coordinates. The reason for this is that there is no appropriate analogue of the ``LCM of the denominators'' for a rational point with infinitely many nonzero coordinates.
\end{remark}

Note that for $p = \infty$, $\Z^\infty$ is not a lattice in $\ell^\infty(\N)$, since it is contained in $c_0(\N)$, the set of all sequences in $\ell^\infty(\N)$ which tend to zero, which is a proper closed subspace of $\ell^\infty(\N)$. To get an example in $\ell^\infty(\N)$, we have two options: shrink the space or expand the lattice.

\begin{example}
\label{examplec0}
$\Z^\infty$ is a lattice in $c_0(\N)$.
\end{example}

\begin{example}
\label{examplelinfty}
$\Z^\N$ is a lattice in $\ell^\infty(\N)$.
\end{example}

We remark that although the space $\ell^\infty(\N)$ is not separable, this does not cause any additional complications in our arguments, which apply equally well to separable and non-separable Banach spaces.

It turns out that the theory of Diophantine approximation in $(X,\Q\Lambda,H_\std)$ depends on one crucial dichotomy: whether or not the lattice $\Lambda$ is cobounded. A lattice $\Lambda\leq X$ is \emph{cobounded} if its \emph{codiameter}
\[
\codiam(\Lambda) := \sup\{\dist(\xx,\Lambda):\xx\in X\}
\]
is finite. In the above, Examples \ref{examplec0} and \ref{examplelinfty} are cobounded, whereas Example \ref{examplelp} is not cobounded.



\subsubsection{Prevalence}
It is not clear what measure would be natural on an infinite-dimensional Banach space. In \cite{HSY} (see also \cite{Christensen}), B. R. Hunt, T. D. Sauer, and J. A. Yorke argued that asking for a measure is too much, and one should be satisfied with being able to give a good definition of ``full measure'' and ``measure zero''. They introduced the notions of \emph{shy} and \emph{prevalent} subsets of a Banach space:

\begin{definition}
Let $X$ be a Banach space. A measure $\mu$ is \emph{transverse} to a set $S\subset X$ if $\mu(S + \vv) = 0$ for all $\vv\in X$. $S$ is said to be \emph{shy} if it is transverse to some compactly supported probability measure, and \emph{prevalent} if its complement is shy.
\end{definition}

If $X$ is finite-dimensional, then a set is shy if and only if it has Lebesgue measure zero; it is prevalent if and only if its complement has Lebesgue measure zero. Moreover, the set of shy sets form a $\sigma$-ideal (i.e. the countable union of shy sets is shy, and any set contained in a shy set is shy). 
These facts together with several others (see \cite{HSY}) give support to the idea that ``shy'' is the appropriate analogue of ``measure zero'' in infinite dimensions and that ``prevalent'' is the appropriate analogue of ``full measure''.

In the sequel we will need the following proposition:

\begin{proposition}
\label{propositionHDinfinity}
Non-shy sets (and in particular prevalent sets) have full Hausdorff dimension.
\end{proposition}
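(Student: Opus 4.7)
The plan is to prove the contrapositive: any $S\subset X$ whose Hausdorff dimension falls short of $\dim X$ (interpreted as $+\infty$ in the infinite-dimensional case) must be shy. Fix an integer $n$ with $\HD(S) < n \leq \dim X$, pick any $n$-dimensional linear subspace $V\leq X$, pick a closed ball $B\subset V$, and let $\mu$ be the $n$-dimensional Lebesgue measure on $V$ restricted to $B$ and renormalized to total mass $1$. Then $\mu$ is a compactly supported Borel probability measure on $X$, which I will show is transverse to $S$.

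To that end, fix $\vv\in X$. The set-theoretic identity
\[
V\cap(S+\vv) \;=\; \vv + \bigl(S\cap(V-\vv)\bigr),
\]
where $V-\vv$ is a $\dim V$-dimensional affine flat, together with the fact that Hausdorff dimension is monotone under inclusion and invariant under translation, gives
\[
\HD\bigl(V\cap(S+\vv)\bigr) \;\leq\; \HD(S) \;<\; n \;=\; \dim V.
\]
Since $V$ is an $n$-dimensional normed space, its $n$-dimensional Lebesgue measure is comparable to $\HH^n$, so any subset of $V$ with Hausdorff dimension strictly less than $n$ is Lebesgue-null in $V$. Therefore $\mu(S+\vv)=\mu(B\cap(S+\vv))=0$ for every $\vv\in X$, which proves that $\mu$ is transverse to $S$ and hence that $S$ is shy.

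There is no real obstacle; the entire argument hinges on using the finiteness of $\HD(S)$ to extract a linear subspace of strictly larger dimension, on which honest finite-dimensional Lebesgue measure is available as a candidate transverse measure. The rest is the routine intersection identity displayed above together with the standard fact that, inside any finite-dimensional normed space, Hausdorff dimension less than the ambient dimension forces Lebesgue measure zero.
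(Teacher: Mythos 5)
Your proof is correct and takes essentially the same approach as the paper: both arguments use Lebesgue measure on an $n$-dimensional subspace as the (candidate) transverse measure, together with the fact that this measure annihilates sets of Hausdorff dimension less than $n$. You have merely stated the argument in contrapositive form and filled in the translation/intersection bookkeeping that the paper leaves implicit.
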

\begin{proof}
Since the proposition is obvious if $\dim(X) < +\infty$, assume that $\dim(X) = +\infty$. Let $S\subset X$ be a non-shy set. Fix $n\in\N$, let $X_n\subset X$ be an $n$-dimensional subspace, and let $\mu_n$ be Lebesgue measure on the unit ball of $X_n$. Since $S$ is not shy, there exists $\vv\in X$ such that $\mu_n(S + \vv) > 0$. Since $\mu_n$ gives measure zero to any set of Hausdorff dimension strictly less than $n$, we have $\HD(S) = \HD(S + \vv) \geq n$. Since $n$ was arbitrary, $\HD(S) = +\infty$.
\end{proof}

\subsection{Main theorems}
\label{subsectionmaintheorems}

We now present the theory of Diophantine approximation in the space $(X,\Q\Lambda,H_\std)$, where $X$ is a Banach space and $\Lambda\leq X$ is a lattice. The theory breaks down into three major cases: finite-dimensional, infinite-dimensional cobounded, and infinite-dimensional non-cobounded. (In finite dimensions, every lattice is cobounded.)



\begin{notation}
For $s\geq 0$, let
\[
\psi_s(q) = q^{-s}.
\]
\end{notation}

\subsubsection{Finite-dimensional case}
Assume that $\Lambda$ is a lattice in a $d$-dimensional Banach space $X$, with $d < +\infty$. Then there exists a linear isomorphism $T:\R^d\to X$ such that $T[\Z^d] = \Lambda$. This demonstrates that the classical results quoted below hold for any lattice in any finite-dimensional Banach space, not just for $\Z^d\leq\R^d$.

\begin{theorem}[Dirichlet 1842 ($d\in\N$); optimality by Liouville 1844 ($d = 1$), Perron 1921 ($d\in\N$)]
\label{theoremdirichlet}
For every $\xx\in X$ and $Q\in\N$, there exists $\pp\in\Lambda$ and $q\leq Q$ such that
\[
\left\|\xx - \frac{\pp}{q}\right\| \leq \frac{C}{qQ^{1/d}},
\]
where $C > 0$ is independent of $\xx$. In particular, the function $\psi_{1 + 1/d}$ is uniformly Dirichlet, and in fact, $\psi_{1 + 1/d}$ is optimal.
\end{theorem}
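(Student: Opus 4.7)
The plan is to reduce the entire statement to the classical case $(\R^d,\Q^d,H_\std)$ via a linear isomorphism. Since $X$ is $d$-dimensional and $\Lambda\leq X$ is a lattice, discreteness forces the $\Z$-rank of $\Lambda$ to be at most $d$, while density of $\R\Lambda$ in $X$ forces it to be at least $d$; hence $\Lambda$ is free on a $\Z$-basis $(b_1,\ldots,b_d)$ which is simultaneously an $\R$-basis of $X$. Letting $T:\R^d\to X$ send the standard basis to $(b_1,\ldots,b_d)$, one has $T[\Z^d]=\Lambda$, $T[\Q^d]=\Q\Lambda$, and $H_\std\circ T=H_\std$ on $\Q^d$ (the gcd condition is preserved because $T|_{\Z^d}$ is a group isomorphism). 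Being a linear bijection between finite-dimensional normed spaces, $T$ is bi-Lipschitz, so Dirichlet-type estimates and $\BA$-sets transfer between $(\R^d,\Q^d,H_\std)$ and $(X,\Q\Lambda,H_\std)$ at the cost of a multiplicative constant depending only on $T$.

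For the Dirichlet bound on $(\R^d,\Q^d,H_\std)$ I would invoke the classical pigeonhole argument: given $\xx\in\R^d$ and $Q\in\N$, set $N=\lfloor Q^{1/d}\rfloor$ and partition $[0,1)^d$ into $N^d\leq Q$ axis-aligned half-open boxes of sidelength $1/N$. Among the $Q+1$ fractional parts $\{q\xx\}$, $q=0,1,\ldots,Q$, two must lie in a common box, producing $0\leq i<j\leq Q$ and $\pp\in\Z^d$ with $\|(j-i)\xx-\pp\|_\infty\leq 1/N$; setting $q=j-i$ and using equivalence of norms on $\R^d$ yields $\|\xx-\pp/q\|\leq C/(qQ^{1/d})$ in the ambient norm. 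To deduce that $\psi_{1+1/d}$ is uniformly Dirichlet, fix $\xx\in X$ and apply the bound for a sequence $Q_k\to\infty$ to produce $\rr_k=\pp_k/q_k\in\Q\Lambda$ with $q_k\leq Q_k$, hence $\|\xx-\rr_k\|\leq C q_k^{-(1+1/d)}=C\psi_{1+1/d}(H_\std(\rr_k))$, where $C$ is uniform in $\xx$; the $\rr_k$ converge to $\xx$ since the right-hand side tends to zero (if $\xx\notin\Q\Lambda$, a denominator-boundedness argument forces $q_k\to\infty$, while if $\xx\in\Q\Lambda$ the constant sequence $\rr_k=\xx$ works trivially).

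For optimality I would appeal to Theorem \ref{theoremBAimpliesoptimality}: since $\psi_{1+1/d}$ is nonincreasing, it suffices to produce a single point of $\BA_{\psi_{1+1/d}}(X,\Q\Lambda,H_\std)$, and by the bi-Lipschitz transfer above this reduces to $\BA_{\psi_{1+1/d}}(\R^d,\Q^d,H_\std)\neq\emptyset$. For $d=1$ any quadratic irrational is badly approximable via the standard continued-fraction estimate. For general $d$, take $\xx=(\alpha_1,\ldots,\alpha_d)$ where $1,\alpha_1,\ldots,\alpha_d$ is a $\Q$-basis of a real algebraic number field of degree $d+1$; the required $\varepsilon>0$ in \eqref{badlyapproximable} is supplied by a Liouville-type lower bound on the norm form of nonzero algebraic integers in that field, since a rational approximation $\|\xx-\pp/q\|<\varepsilon q^{-(1+1/d)}$ would otherwise generate an algebraic integer of arbitrarily small norm.

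The main obstacle is essentially cosmetic: every ingredient is classical. The only step deserving care is the construction of $T$, which uses both defining properties of a lattice simultaneously -- discreteness to bound the rank above, density of $\R\Lambda$ to bound it below -- so that the height functions agree exactly under $T$ and the theorem genuinely reduces to the standard setup on $\R^d$.
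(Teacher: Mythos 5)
Your proposal is correct and follows essentially the same route as the paper: the paper treats this theorem as a quoted classical result, justifying its validity for an arbitrary lattice in a $d$-dimensional Banach space only by noting the existence of a linear isomorphism $T:\R^d\to X$ with $T[\Z^d]=\Lambda$, which is exactly the reduction you carry out in detail. The remaining ingredients you supply (the pigeonhole argument for the Dirichlet bound, and Perron's badly approximable vectors combined with Theorem \ref{theoremBAimpliesoptimality} for optimality) are the standard classical proofs the paper cites rather than reproves.
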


\begin{theorem}[Jarn\'ik 1928 ($d = 1$), Schmidt 1969 ($d\in\N$)]
\label{theoremfulldimension}
We have $\HD(\BA_{\psi_{1 + 1/d}}) = d$.
\end{theorem}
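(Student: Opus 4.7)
The plan is to reduce to the classical Euclidean case and then invoke Schmidt's game, since the upper bound $\HD(\BA_{\psi_{1+1/d}})\leq d$ is immediate from $\BA_{\psi_{1+1/d}}\subset X$ and $\dim X=d$.

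For the reduction, use the linear isomorphism $T:\R^d\to X$ with $T[\Z^d]=\Lambda$ noted just before Theorem \ref{theoremdirichlet}. Because $X$ is finite-dimensional, $T$ is bi-Lipschitz, so it preserves Hausdorff dimension. Moreover $T[\Q^d]=\Q\Lambda$ and $H_\std\circ T=H_\std$ on $\Q^d$, so \eqref{badlyapproximable} is preserved up to adjusting the constant $\varepsilon$ by the bi-Lipschitz constants of $T$. Thus $T[\BA_{\psi_{1+1/d}}]=\BA_{\psi_{1+1/d}}$ and one may work in $(\R^d,\Z^d,H_\std)$ with the Euclidean norm.

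For the lower bound, I would show $\BA_{\psi_{1+1/d}}$ is \emph{winning} for Schmidt's game. Recall that in Schmidt's $(\alpha,\beta)$-game two players alternately choose nested closed balls $B_1\supset A_1\supset B_2\supset\cdots$ with prescribed contraction ratios $\alpha,\beta\in(0,1)$, and a set $S$ is $\alpha$-winning if for every $\beta$ the first player has a strategy forcing the unique common point into $S$; Schmidt's theorem then gives $\HD(S)=d$. To prove $\BA_{\psi_{1+1/d}}$ is $\alpha$-winning for all sufficiently small $\alpha$, fix $\varepsilon>0$ depending on $\alpha$ and describe Alice's strategy as follows: given Bob's ball $B_n$ of radius $r_n$, let $Q_n$ be the unique scale with $\varepsilon Q_n^{-(1+1/d)}\asymp r_n$, and choose $A_n\subset B_n$ of radius $\alpha r_n$ disjoint from the $\bigl(\varepsilon q^{-(1+1/d)}\bigr)$-neighborhood of every $\pp/q\in\Q^d$ with $q$ in a dyadic band around $Q_n$. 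Dovetailing the bands over $n$ covers every denominator, so the common limit point of the $B_n$'s satisfies \eqref{badlyapproximable}.

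The main obstacle is the packing/counting step needed to guarantee that Alice's move actually exists: one must bound, uniformly in $n$, the number of ``dangerous'' rationals $\pp/q$ in the current height band that meet $B_n$, so that the forbidden neighborhoods fit into a proportion of $B_n$ smaller than $\alpha^d$. This is handled by the standard separation estimate that distinct rationals with denominators in $(Q_n/2,Q_n]$ are at least $Q_n^{-2}$ apart, combined with a volume calculation calibrated by the Dirichlet-scale choice of $Q_n$; with $\varepsilon$ and $\alpha$ taken small enough depending only on $d$, Alice can always execute her move. Once the strategy is in place, the membership of the limit point in $\BA_{\psi_{1+1/d}}$ is a direct induction on $n$ using the nesting $A_n\subset B_n$ and the definition of $Q_n$, and Schmidt's theorem then yields $\HD(\BA_{\psi_{1+1/d}})=d$.
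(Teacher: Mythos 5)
This theorem is not proved in the paper: by Convention 1 it is quoted from the literature (Jarn\'ik, Schmidt), and the only argument the paper supplies is the reduction to $(\R^d,\Q^d,H_\std)$ via the linear isomorphism $T$ with $T[\Z^d]=\Lambda$ stated just before Theorem \ref{theoremdirichlet}. Your first paragraph reproduces that reduction correctly: in finite dimensions $T$ is automatically bi-Lipschitz, it intertwines the two standard height functions, and conjugating by a bi-Lipschitz height-preserving bijection of the rationals changes \eqref{badlyapproximable} only by a constant, hence preserves $\BA_{\psi_{1+1/d}}$ and Hausdorff dimension. Up to that point you match the paper, and citing Schmidt 1969 for the Euclidean case would finish.

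Where you go further and sketch a proof of the Euclidean case via Schmidt's game, the outline is the right one, but the step you yourself flag as the main obstacle is not closed by what you propose, at least for $d\geq 2$. The separation estimate for distinct $\pp/q$, $\pp'/q'$ with $q,q'\in(Q/2,Q]$ gives only $\|\pp/q-\pp'/q'\|\geq 1/(qq')\geq Q^{-2}$, while Bob's ball at the relevant stage has radius $r\asymp\varepsilon Q^{-(1+1/d)}$. A packing count therefore allows up to roughly $(rQ^{2})^{d}\asymp\varepsilon^{d}Q^{d-1}$ dangerous rationals in $B_n$, which is unbounded in $Q$ once $d\geq 2$, so Alice cannot avoid the union of that many forbidden neighborhoods by shrinking by a fixed factor $\alpha$. (For $d=1$ the count is $O(1)$ and your argument does go through; this is essentially Jarn\'ik's case.) The standard repair is the simplex lemma: all rationals of height at most $Q$ lying in a ball of radius $cQ^{-(1+1/d)}$ are contained in a single affine hyperplane, because $d+1$ of them in general position would span a simplex of volume at least $1/(d!\,Q^{d+1})$, which exceeds the volume of such a ball when $c$ is small. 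Alice then avoids a neighborhood of one hyperplane per stage rather than a large union of balls, which is possible for small $\alpha$. With that lemma substituted for your counting step (and the dyadic-band bookkeeping retained), your strategy becomes Schmidt's proof and the lower bound follows; the upper bound $\HD(\BA_{\psi_{1+1/d}})\leq d$ is trivial as you say.
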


\begin{theorem}[Jarn\'ik 1929 ($d = 1$), Jarn\'ik 1931 ($d\in\N$), Besicovitch 1934 ($d = 1$)]
\label{theoremjarnik}
For all $s\geq 1 + 1/d$, we have $\HD(\WA_{\psi_s}) = (d + 1)/s$.
\end{theorem}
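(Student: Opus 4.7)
The plan is to invoke the linear isomorphism from Theorem~\ref{theoremdirichlet} to reduce to $X=\R^d$, $\Lambda=\Z^d$: any such $T$ is bi-Lipschitz, sends $\Q^d$ to $\Q\Lambda$, and preserves $H_\std$, and bi-Lipschitz bijections preserve $\HD$ and the defining condition of $\WA_{\psi_s}$ (after absorbing bi-Lipschitz constants into $\varepsilon$). Then I would prove the two inequalities on $\HD(\WA_{\psi_s})$ separately, handling first the boundary case $s=1+1/d$ by monotonicity: if $s'>s\geq 1$ then $\psi_{s'}\leq\psi_s$ on $[1,\infty)$, so $\WA_{\psi_{s'}}\subseteq\WA_{\psi_s}$ (for rationals of sufficiently large height, which is all that matters). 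Granting the interior case, $\HD(\WA_{\psi_{1+1/d}})\geq\sup_{s'>1+1/d}(d+1)/s'=d$, while the reverse inequality is immediate from $\dim X=d$.

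For the upper bound at $s>1+1/d$, unwinding the definition of $\WA_{\psi_s}$ gives, for every $\varepsilon>0$,
\[
\WA_{\psi_s}\setminus\Q^d \;\subseteq\; \bigcap_{N\in\N}\bigcup_{\substack{\rr\in\Q^d\\H_\std(\rr)\geq N}} B\bigl(\rr,\varepsilon H_\std(\rr)^{-s}\bigr).
\]
Fix a bounded $K\subset\R^d$ and $\alpha>(d+1)/s$. Since the number of $\rr\in\Q^d$ with $H_\std(\rr)=q$ in any fixed bounded neighborhood of $K$ is $O(q^d)$, the $\alpha$-premeasure of this cover restricted to $K$ is at most $C\varepsilon^\alpha\sum_{q\geq N}q^{d-s\alpha}$, which converges (as $s\alpha>d+1$) and tends to $0$ as $N\to\infty$. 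Hence $\HH^\alpha(\WA_{\psi_s}\cap K)=0$; letting $\alpha\searrow(d+1)/s$ and exhausting $\R^d$ by bounded sets gives $\HD(\WA_{\psi_s})\leq(d+1)/s$.

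For the lower bound at $s>1+1/d$, I would apply the Mass Transference Principle of Beresnevich--Velani. Dirichlet's theorem~\ref{theoremdirichlet} implies that the $\limsup$ set
\[
\Lambda_{1+1/d}\;:=\;\bigcap_N\bigcup_{q\geq N}\bigcup_{\rr\in\Q^d:\,H_\std(\rr)=q}B\bigl(\rr,Cq^{-1-1/d}\bigr)
\]
has full Lebesgue measure in $\R^d$, and the MTP upgrades this to $\HH^{(d+1)/s}(\Lambda_s)=+\infty$, where $\Lambda_s$ is the same $\limsup$ with radius $q^{-s}$. Since $\Lambda_s\subseteq\WA_{\psi_s}$ (the implicit constant $C$ in the radius is absorbed into the $\varepsilon$ of the $\WA$ definition for all but finitely many $q$), this gives $\HD(\WA_{\psi_s})\geq(d+1)/s$.

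The main obstacle is this lower bound. If one prefers not to quote the MTP, one must execute the original Jarn\'ik--Besicovitch Cantor construction by hand: choose a rapidly growing sequence of denominators $q_n$, select many well-separated rationals of denominator $q_n$ inside each surviving parent ball of radius $r_n$, replace each by the concentric ball of radius $r_{n+1}:=q_n^{-s}$, and invoke the mass distribution principle to compute the Hausdorff dimension of the resulting Cantor subset of $\WA_{\psi_s}$. The bookkeeping is to verify (i) child balls fit inside parent balls (a spacing condition on the growth of $q_n$), (ii) enough distinct children exist per parent to yield dimension arbitrarily close to $(d+1)/s$, and (iii) the selected rationals are pairwise separated enough that their $r_{n+1}$-balls are disjoint; with the MTP the first two conditions collapse into a single verification of full Lebesgue measure of $\Lambda_{1+1/d}$.
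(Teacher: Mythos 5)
Your proposal proves far more than the paper does: for this theorem the paper offers no proof at all. By Convention 1, Theorem \ref{theoremjarnik} is quoted from the literature, and the paper's entire contribution is the remark opening the finite-dimensional subsection: a linear isomorphism $T:\R^d\to X$ with $T[\Z^d]=\Lambda$ is bi-Lipschitz, carries $\Q^d$ to $\Q\Lambda$ preserving $H_\std$, and hence transports the classical statement to an arbitrary lattice in a $d$-dimensional Banach space. Your first paragraph is exactly this reduction; the rest is a correct outline of the classical argument (covering upper bound, Mass Transference Principle or Cantor-construction lower bound), and your treatment of the endpoint $s=1+1/d$ by monotonicity in $s$ is fine.

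One step is wrong as justified, though the repair is already in your own text. The inclusion $\Lambda_s\subseteq\WA_{\psi_s}$ does not hold for the reason you give: in this paper $\WA_{\psi_s}$ is the complement of $\BA_{\psi_s}$, so $\xx\in\WA_{\psi_s}$ requires that for \emph{every} $\varepsilon>0$ there exist $\rr$ with $\dist(\rr,\xx)<\varepsilon H_\std(\rr)^{-s}$, whereas membership in your $\limsup$ set $\Lambda_s$ only supplies infinitely many $\rr$ with $\dist(\rr,\xx)<H_\std(\rr)^{-s}$ at one fixed constant. A point with $\dist(\rr,\xx)\geq \frac{1}{2}H_\std(\rr)^{-s}$ for all $\rr$ but $\dist(\rr,\xx)<H_\std(\rr)^{-s}$ infinitely often lies in $\Lambda_s\cap\BA_{\psi_s}$; there is no ``absorbing the constant into $\varepsilon$'' because $\varepsilon$ must be arbitrarily small. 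The fix is the same device you used at the endpoint: for $s'>s$ one has $\Lambda_{s'}\setminus\Q^d\subseteq\WA_{\psi_s}$, since $H_\std(\rr)^{-s'}=H_\std(\rr)^{-(s'-s)}\,\psi_s(H_\std(\rr))$ and the heights of the approximants of an irrational point necessarily tend to infinity; applying the MTP to $\Lambda_{s'}$ and letting $s'\searrow s$ yields $\HD(\WA_{\psi_s})\geq\sup_{s'>s}(d+1)/s'=(d+1)/s$. (No such care is needed in your upper bound, where the containment of $\WA_{\psi_s}\setminus\Q^d$ in the $\varepsilon$-limsup set for each fixed $\varepsilon$ is valid.)
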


\begin{theorem}[Khinchin 1924 ($d = 1$), Khinchin 1926 ($d\in\N$), Groshev 1938 ($d\in\N$)]
\label{theoremkhinchin}
If $q\mapsto q^d\psi(q)$ is nonincreasing, then $\WA_\psi$ is of full Lebesgue measure if the series $\sum_{q = 1}^\infty q^d\psi(q)$ diverges; if the series converges, then $\WA_\psi$ is of Lebesgue measure zero.
\end{theorem}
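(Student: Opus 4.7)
My plan is to reduce the theorem to the classical Khinchin--Groshev theorem on $(\R^d,\Q^d,H_\std)$. As noted just before the statement, there is a linear isomorphism $T:\R^d\to X$ with $T[\Z^d]=\Lambda$; such a $T$ is bi-Lipschitz and preserves the height function, so $T^{-1}$ carries $\WA_\psi(X,\Q\Lambda,H_\std)$ bijectively onto $\WA_\psi(\R^d,\Q^d,H_\std)$ (the Lipschitz constants get absorbed into the $\varepsilon$ in Definition \ref{definitionbadlyapproximable}). Since $T$ transports any Haar measure on $X$ to a scalar multiple of Lebesgue measure on $\R^d$, the Lebesgue measure classes of the two $\WA_\psi$ sets coincide, so I may assume $X=\R^d$ and $\Lambda=\Z^d$. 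Both $\WA_\psi$ and Lebesgue measure are $\Z^d$-translation invariant (translating $\xx$ by $\nn\in\Z^d$ translates each approximating rational by $\nn$ while preserving $H_\std$), so it suffices to compute $\mathrm{vol}(\WA_\psi\cap[0,1)^d)$.

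For the convergence direction I would apply Borel--Cantelli; the monotonicity hypothesis is not needed. Writing $\WA_\psi=\bigcap_{n\in\N}W_{1/n}$ where $W_\varepsilon:=\bigcup_{\rr\in\Q^d}B(\rr,\varepsilon\psi(H_\std(\rr)))$, and summing over denominators (there are $O(q^d)$ rationals $\pp/q$ in lowest terms in $[0,1)^d$, each contributing a ball of volume $\lesssim(\varepsilon\psi(q))^d$), I obtain
\[
\mathrm{vol}(W_\varepsilon\cap[0,1)^d)\;\lesssim\;\varepsilon^d\sum_{q=1}^{\infty}q^d\psi(q)^d.
\]
Under the convergence hypothesis this bound vanishes as $\varepsilon\to 0$, whence $\mathrm{vol}(\WA_\psi\cap[0,1)^d)=0$.

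The main obstacle is the divergence direction, for which the monotonicity hypothesis is genuinely required (Duffin--Schaeffer-type counterexamples rule out dropping it). The standard strategy is two-step: first apply Gallagher's zero--one law (using the ergodicity of the rational-translation action) to reduce to showing $\mathrm{vol}(\WA_\psi\cap[0,1)^d)>0$; second, obtain positive measure via a Paley--Zygmund / second-moment estimate on the $\limsup$ set $\limsup_\rr B(\rr,\psi(H_\std(\rr)))$, in which the divergence hypothesis provides the first-moment lower bound while the monotonicity of $q\mapsto q^d\psi(q)$ controls the pairwise overlaps of approximating balls appearing in the second moment. Since the result is classical and attributed in the theorem to Khinchin (1924, 1926) and Groshev (1938), the cleanest option is simply to cite their proofs rather than reproduce them.
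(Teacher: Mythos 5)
Your route is the same as the paper's: Theorem \ref{theoremkhinchin} is quoted from the literature, and the only argument the paper supplies is the remark preceding it that a linear isomorphism $T:\R^d\to X$ with $T[\Z^d]=\Lambda$ transports the classical statement on $(\R^d,\Q^d,H_\std)$ to an arbitrary lattice in a finite-dimensional Banach space. Your reduction via $T$, the translation-invariance reduction to the unit cube, and the appeal to Khinchin--Groshev are exactly what is intended.

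However, your own Borel--Cantelli computation exposes a discrepancy that you then pass over. The sum you correctly obtain is $\sum_q q^d\psi(q)^d$ (about $q^d$ centers of denominator $q$ per unit cube, each ball of volume comparable to $\psi(q)^d$), and this --- not $\sum_q q^d\psi(q)$ --- is the series governing the classical theorem in the normalization $\|\xx-\pp/q\|\leq\varepsilon\psi(H_\std(\pp/q))$ of Definition \ref{definitionbadlyapproximable}. For $d=1$ the two series coincide, and the convergence half is safe for all $d$: convergence of $\sum_q q^d\psi(q)$ forces $\psi(q)\leq1$ eventually, whence $\psi(q)^d\leq\psi(q)$ and your bound still tends to $0$. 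But the divergence half genuinely breaks for $d\geq2$: divergence of $\sum_q q^d\psi(q)$ does not imply divergence of $\sum_q q^d\psi(q)^d$, so the classical theorem you cite does not apply under the stated hypothesis. Indeed the statement as printed is then false: for $d=2$ and $\psi=\psi_3$ the map $q\mapsto q^2\psi_3(q)=q^{-1}$ is nonincreasing and $\sum_q q^{-1}$ diverges, yet Theorem \ref{theoremjarnik} gives $\HD(\WA_{\psi_3})=3/3=1<2$, so $\WA_{\psi_3}$ is Lebesgue-null. The series in the statement should carry the exponent $d$ on $\psi$ as well, i.e. it should read $\sum_{q=1}^\infty q^d\psi(q)^d$ (with the monotonicity hypothesis adjusted to match, e.g. $q\mapsto q\psi(q)$ nonincreasing); with that correction your citation-based argument is complete, and without it no proof of the divergence direction can exist.
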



\subsubsection{Infinite-dimensional non-cobounded case}
Assume that $\Lambda$ is a non-cobounded lattice in an infinite-dimensional Banach space $X$.

\begin{reptheorem}{theoremdirichletnoncobounded}[Dirichlet-type theorem]
The function $\psi_0 \equiv 1$ is an optimal uniformly Dirichlet function. However, $\BA_{\psi_0} = \emptyset$.
\end{reptheorem}

\begin{reptheorem}{theoremkhinchinnoncobounded}[Khinchin-type theorem, Jarn\'ik--Schmidt type theorem]
For any function $\psi\to 0$, $\BA_\psi$ is prevalent. In particular, $\HD(\BA_\psi) = +\infty$.
\end{reptheorem}

To state the Jarn\'ik--Besicovitch type theorem in the non-cobounded case, we introduce the notion of \emph{strong discreteness}.

\begin{definition}
A lattice $\Lambda\leq X$ is \emph{strongly discrete} if
\[
\#(\Lambda\cap B(\0,C)) < +\infty \all C > 0.
\]
\end{definition}

All three of the examples given in \sectionsymbol\ref{subsectionbanachdiophantine} are not strongly discrete. \\


\begin{reptheorem}{theoremjarniknoncobounded}[Jarn\'ik--Besicovitch type theorem]
~
\begin{itemize}
\item[(i)] For any $s\geq 0$, we have $\HD(\WA_{\psi_s}) = +\infty$.
\item[(ii)] Suppose that $\Lambda$ is not strongly discrete. Then for any nonincreasing function $\psi\to 0$, $\HD(\WA_\psi) = +\infty$. In fact, for any nondecreasing function $f:(0,+\infty)\to(0,+\infty)$, $\HH^f(\WA_\psi) = +\infty$.
\end{itemize}
\end{reptheorem}


\subsubsection{Infinite-dimensional cobounded case}
Assume that $\Lambda$ is a cobounded lattice in an infinite-dimensional Banach space $X$.

\begin{reptheorem}{theoremdirichletcobounded}[Dirichlet-type theorem]
Fix $\varepsilon > 0$. For every $\xx\in X$ and for every $q\in\N$, there exists $\pp\in\Lambda$ such that
\[
\left\|\xx - \frac{\pp}{q}\right\| \leq \frac{\codiam(\Lambda) + \varepsilon}{q}\cdot
\]
In particular, the function $\psi_1(q) = 1/q$ is uniformly Dirichlet, and in fact, $\psi_1$ is optimal.
\end{reptheorem}

\begin{reptheorem}{theoremjarnikcobounded}[Jarn\'ik--Besicovitch type theorem]
For any nonincreasing function $\psi\to 0$, $\HD(\WA_\psi) = +\infty$. In fact, for any nondecreasing function $f:(0,+\infty)\to(0,+\infty)$, $\HH^f(\WA_\psi) = +\infty$.
\end{reptheorem}

\begin{reptheorem}{theoremkhinchincobounded}[Khinchin-type theorem, Jarn\'ik--Schmidt type theorem]
The set $\BA_{\psi_1}$ is prevalent. In particular, $\HD(\BA_{\psi_1}) = +\infty$.
\end{reptheorem}


\begin{remark}
Based on the finite-dimensional case, it is natural to expect that $\psi_1(q) = 1/q$ is an optimal Dirichlet function in the infinite-dimensional case, as it is the limit of the optimal Dirichlet functions $\psi_{1 + 1/d}$ of the finite-dimensional cases. However, according to the theorems above this is only true if the lattice is cobounded, whereas if the lattice is not cobounded then $\psi_0\equiv 1$ is the optimal Dirichlet function. A possible explanation for this can be found in the fact that in $\R^d$, the function $\psi_{1 + 1/d}$ is uniformly Dirichlet with the constant $C_d = 1$ if $\R^d$ is equipped with the $\ell^\infty$ norm; this suggests that if the $\ell^\infty$ norm is used, then there can be stability as $d\to\infty$. If an $\ell^p$ norm is used with $1\leq p < \infty$, then the constant $C_d$ will degenerate as $d\to\infty$, and the limit function will no longer be Dirichlet. (To look at it in another way, in order to ``take the limit of Dirichlet's theorem'' one would need to take the limit of the functions $C_d\psi_{1 + 1/d}$ as $d\to\infty$, and if $C_d\tendsto d \infty$ fast enough, then this sequence does not converge.)
\end{remark}

\draftnewpage
\section{Optimal Dirichlet functions}
\label{sectionoptimality}
In this section we discuss and motivate the notion of an optimal Dirichlet function introduced in \sectionsymbol\ref{subsectiondirichlettype}. We begin with the following observation:

\begin{observation}
\label{observationdirichletpartialorder}
Let $(X,\QQ,H)$ be a Diophantine space. Suppose that $\psi\leq C\phi$, with $\psi$ Dirichlet. Then $\phi$ is Dirichlet.
\end{observation}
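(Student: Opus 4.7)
The plan is to unfold the definition of \emph{Dirichlet function} directly and propagate the pointwise inequality $\psi \leq C\phi$ through it. There is no real obstacle here; the statement is a structural consequence of the fact that being Dirichlet permits an arbitrary multiplicative constant $C_\xx$ depending on the point.

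More concretely, I would fix $\xx \in X$ and apply the hypothesis that $\psi$ is Dirichlet to produce a constant $C_\xx > 0$ and a sequence $(\rr_n)_{n=1}^\infty$ in $\QQ$ with $\rr_n \tendsto n \xx$ and
\[
\dist(\xx,\rr_n) \leq C_\xx\, \psi\circ H(\rr_n) \quad \text{for all } n \in \N.
\]
Then I would simply chain the hypothesis $\psi \leq C\phi$, which gives $\psi\circ H(\rr_n) \leq C\,\phi\circ H(\rr_n)$, to obtain
\[
\dist(\xx,\rr_n) \leq (C\,C_\xx)\, \phi\circ H(\rr_n) \quad \text{for all } n \in \N.
\]
Setting $C'_\xx := C\,C_\xx$ and keeping the same sequence $(\rr_n)$ witnesses that $\phi$ satisfies the defining condition of a Dirichlet function at $\xx$. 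Since $\xx$ was arbitrary, $\phi$ is Dirichlet.

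The only thing worth flagging is that the constant $C$ in $\psi \leq C\phi$ is global (independent of $\xx$), so even if $\psi$ were uniformly Dirichlet the argument would preserve uniform Dirichletness; conversely, the proof uses no monotonicity, continuity, or decay hypothesis on $\psi$ or $\phi$, and nowhere does it interact with the metric or height structure of $(X,\QQ,H)$ beyond what appears in the definition. Hence the observation is purely a consequence of the definitional ``$\exists C_\xx$'' quantifier allowing constants to be absorbed.
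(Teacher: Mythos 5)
Your proof is correct and is exactly the intended argument: the paper states this as an Observation without proof precisely because it reduces to absorbing the global constant $C$ into the pointwise constant $C_\xx$ while reusing the same approximating sequence. Nothing to add.
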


Based on this observation, one is tempted to say that a Dirichlet function $\psi$ is optimal if it is maximal in the partial order on Dirichlet functions defined by
\[
\psi \succ \phi \;\;\Leftrightarrow\;\; \exists C > 0 \;\; \psi \leq C\phi,
\]
or equivalently, if every other Dirichlet function $\phi$ can be proved to be Dirichlet as a result of applying Observation \ref{observationdirichletpartialorder}.
\begin{definition}
A Dirichlet function $\psi$ is \emph{strongly optimal} if $\psi\succ\phi$ for every Dirichlet function $\phi$.
\end{definition}
This definition makes rigorous the idea that a Dirichlet-type theorem is optimal if it ``implies all other Dirichlet-type theorems (via Observation \ref{observationdirichletpartialorder})''. However, the definition is too strong even for the most canonical Diophantine space $(\R,\Q,H_\std)$. Indeed, we have the following:

\begin{proposition}
There is no strongly optimal Dirichlet function on $(\R,\Q,H_\std)$. In particular, the Dirichlet function $\psi_2$ is not strongly optimal on $(\R,\Q,H_\std)$.
\end{proposition}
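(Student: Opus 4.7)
The plan is to establish the stronger first claim (no Dirichlet function is strongly optimal), from which the statement about $\psi_2$ follows as a special case. Given any Dirichlet function $\psi$, I will exhibit a competitor Dirichlet function $\phi$ with $\sup_q \psi(q)/\phi(q) = +\infty$, so that $\psi \leq C\phi$ fails for every $C$, i.e. $\psi \not\succ \phi$. The competitor $\phi$ is built as a modification of the ``universal'' Dirichlet function $\psi_2(q) = q^{-2}$, made much smaller than $\psi$ along a super-sparse sequence of denominators, while remaining Dirichlet by exploiting the continued fraction structure.

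Concretely, fix a badly approximable irrational $x_0 \in \R$ (e.g.\ $\sqrt{2}$), so $|x_0 - p/q| \geq c_0/q^2$ for every reduced $p/q$. Applying the $\psi$-Dirichlet condition at $x_0$ produces a constant $C_{x_0} > 0$ and infinitely many reduced approximations $p/q$ with $|x_0 - p/q| \leq C_{x_0}\psi(q)$; combined with the BA bound this yields an infinite set $T \subset \N$ on which $\psi(q) \geq c/q^2$ with $c := c_0/C_{x_0}$. From $T$, extract a subsequence $Q_1 < Q_2 < \cdots$ satisfying $Q_{k+1} \geq k\,Q_k^2$, and define
\[
\phi(q) := \begin{cases} 1/q^2, & q \notin \{Q_k : k \in \N\}, \\ 1/(k\,Q_k^2), & q = Q_k.\end{cases}
\]
Then $\psi(Q_k)/\phi(Q_k) \geq (c/Q_k^2)\cdot(k\,Q_k^2) = ck \to \infty$, giving the required unboundedness.

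Verifying that $\phi$ is itself Dirichlet is the crux. For $x \in \R \setminus \Q$ with continued-fraction convergents $(p_n/q_n)$, one has $|x - p_n/q_n| \leq 1/(q_n q_{n+1})$. If infinitely many convergent denominators $q_n$ lie outside $\{Q_k\}$, then $\phi(q_n) = 1/q_n^2 \geq |x - p_n/q_n|$ at those indices, and the Dirichlet inequality holds with constant $1$. Otherwise, for all large $n$ we have $q_n = Q_{k(n)}$ for a strictly increasing sequence $k(n)$, and the growth condition forces $q_{n+1} \geq Q_{k(n)+1} \geq k(n)\,Q_{k(n)}^2 = k(n)\,q_n^2$; hence $|x - p_n/q_n| \leq 1/(k(n)\,q_n^3) \leq 1/(k(n)\,q_n^2) = \phi(q_n)$. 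Rational $x$ pose no issue: the constant sequence $r_n \equiv x$ satisfies the Dirichlet inequality trivially.

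The main obstacle is precisely this second case --- the possibility that some $x \in \R$ has its entire sequence of convergent denominators falling inside the sparse set $\{Q_k\}$. Such $x$ can exist, and they force the super-polynomial growth rate $Q_{k+1} \geq k\,Q_k^2$: this ensures the gap $q_{n+1}/q_n$ is large enough that the continued-fraction approximation bound automatically beats the reduced value $\phi(Q_{k(n)}) = 1/(k(n)\,Q_{k(n)}^2)$. The auxiliary use of the BA point $x_0$ is what makes the argument uniform across all Dirichlet $\psi$: it guarantees the lower bound $\psi(q) \gtrsim 1/q^2$ along the infinite set $T$ from which $(Q_k)$ is drawn, so that $\psi(Q_k)/\phi(Q_k) \to \infty$ irrespective of how small $\psi$ might be elsewhere.
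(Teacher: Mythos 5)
Your proof is correct, but it takes a genuinely different route from the paper's. The paper first proves a lemma (Lemma \ref{lemmapsiQ}) that for any sequence $\mathbf Q$ increasing to infinity, the function $\psi_{\mathbf Q}(q)=1/(qQ(q))$ is uniformly Dirichlet --- verified directly from Dirichlet's theorem rather than from continued fractions --- and then takes two interleaved doubly-exponential sequences $\mathbf Q_0,\mathbf Q_1$ so that $\min(\psi_{\mathbf Q_0},\psi_{\mathbf Q_1})\leq\psi_3$. A strongly optimal $\psi$ would have to satisfy $\psi\leq C\psi_{\mathbf Q_i}$ for both $i$, hence $\psi\leq C\psi_3$, which would make $\psi_3$ Dirichlet and contradict the optimality of $\psi_2$ (itself a consequence of $\BA_{\psi_2}\neq\emptyset$). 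So the paper's two witness functions are universal, independent of the candidate $\psi$, and the contradiction is routed through Observation \ref{observationdirichletpartialorder} and Theorem \ref{theoremBAimpliesoptimality}. You instead build, for each given Dirichlet $\psi$, a single tailored competitor $\phi$: you use a badly approximable point to locate an infinite set where $\psi(q)\gtrsim 1/q^2$, depress $\psi_2$ along a super-sparse subsequence of that set, and check Dirichlet-ness of $\phi$ by hand via convergents, with the growth condition $Q_{k+1}\geq kQ_k^2$ handling the one delicate case where all convergent denominators of some $x$ land in the sparse set. Both arguments ultimately lean on the existence of badly approximable numbers and on sparse modifications of $\psi_2$; yours is more self-contained and constructive (one explicit witness per $\psi$, no appeal to the optimality of $\psi_2$ as a black box), while the paper's is shorter because the heavy lifting is delegated to Dirichlet's theorem and to results already established elsewhere in the paper. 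Your verification steps (the infinitude and unboundedness of the set $T$, the dichotomy on convergent denominators, and the trivial treatment of rational $x$) are all sound.
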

\begin{proof}
\begin{lemma}
\label{lemmapsiQ}
For any sequence $\mathbf Q = (Q_n)_1^\infty$ increasing to infinity, the function
\begin{equation}
\label{psiqqQq}
\psi_{\mathbf Q}(q) = \frac{1}{q Q(q)},
\end{equation}
where
\[
Q(q) = \min\{Q_n: Q_n\geq q\}
\]
is uniformly Dirichlet for $(\R,\Q,H_\std)$.
\end{lemma}
\begin{subproof}
Fix $x\in\R$ and let $C = 1$. By Theorem \ref{theoremdirichlet}, for each $n\in\N$ there exists $r_n = p_n/q_n\in\Q$ with $q_n\leq Q_n$ such that
\begin{equation}
\label{xrqQn}
|x - r_n| \leq \frac{1}{q_nQ_n}\cdot
\end{equation}
Since $Q_n\geq q_n$, we have
\[
Q(q_n)\leq Q_n
\]
and thus
\begin{equation}
\label{xrqQq}
|x - r_n| \leq \frac{1}{q_nQ(q_n)} = \psi(q_n).
\end{equation}
Since $Q_n\tendsto n +\infty$, \eqref{xrqQn} implies that $r_n\tendsto n x$. Thus the function \eqref{psiqqQq} is uniformly Dirichlet.

\end{subproof}
To complete the proof, we will find two sequences $\mathbf Q_0 = (Q_n^{(0)})_1^\infty$ and $\mathbf Q_1 = (Q_n^{(1)})_1^\infty$ such that the minimum of the two functions $\psi_{\mathbf Q_0}$ and $\psi_{\mathbf Q_1}$ is not a Dirichlet function. We choose the sequences
\[
Q_n^{(i)} = 2^{2^{2n + i}}, \;\; i = 1,2
\]
and leave it to the reader to verify that the function $\phi = \min(\psi_{\mathbf Q_0},\psi_{\mathbf Q_1})$ satisfies
\[
\phi \leq \psi_3.
\]
(It suffices to check the inequality for the worst-case scenario $q \in \mathbf Q_0\cup \mathbf Q_1$.) Now suppose that $\psi$ is an optimal Dirichlet function for $(\R,\Q,H_\std)$. Then since $\psi_{\mathbf Q_0}$ and $\psi_{\mathbf Q_1}$ are Dirichlet, we have $\psi \leq C \psi_{\mathbf Q_i}$ for some $C > 0$. Thus $\psi\leq C\phi \leq C\psi_3$, so by Observation \ref{observationdirichletpartialorder}, $\psi_3$ is Dirichlet. This contradicts the optimality of $\psi_2$, since $\frac{\psi_3}{\psi_2}\to 0$.
\end{proof}

Having ruled out strong optimality as a notion of optimality, we turn to the weaker notion of optimality given in \sectionsymbol\ref{subsectiondirichlettype}. We repeat it here for convenience:

\begin{definition}
\label{definitiondirichletoptimal}
A Dirichlet function $\psi$ is \emph{optimal} (with respect to a Diophantine space $(X,\QQ,H)$) if there is no Dirichlet function $\phi$ satisfying
\begin{equation}
\label{phipsi}
\frac{\phi}{\psi}\to 0.
\end{equation}
\end{definition}

How do we know that this is the ``correct'' definition? We give two reasons:

\begin{itemize}
\item[1.] In the case of a $\sigma$-compact Diophantine space, for example a finite-dimensional Banach space, our new definition agrees with the more classical criterion of the existence of badly approximable points. Even in the non $\sigma$-compact case, the existence of badly approximable points implies optimality.
\item[2.] The notion of optimality agrees with the notion of strong optimality if the class of functions is restricted to a suitable class of ``non-pathological'' functions.
\end{itemize}

We now proceed to elaborate on each of these reasons.

\subsection{Optimality versus BA}
Traditionally, the existence of badly approximable points has been thought to demonstrate that Dirichlet's function is optimal (up to a constant). In our terminology, this intuition becomes a theorem:


\begin{theorem}[Existence of BA implies optimality]
\label{theoremBAimpliesoptimality}
Let $(X,\QQ,H)$ be a Diophantine space. If $\psi:(0,+\infty)\to(0,+\infty)$ is any nonincreasing function and if $\BA_\psi\neq\emptyset$ and if $\phi:(0,+\infty)\to(0,+\infty)$ satisfies \eqref{phipsi}, then $\phi$ is not a Dirichlet function.
\end{theorem}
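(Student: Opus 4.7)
The plan is a direct contradiction argument. Fix $\xx\in\BA_\psi$ and let $\varepsilon>0$ be the constant from Definition \ref{definitionbadlyapproximable}, so that $\dist(\rr,\xx)\geq \varepsilon\psi\circ H(\rr)$ for every $\rr\in\QQ$. Suppose for contradiction that $\phi$ is a Dirichlet function. Applied to $\xx$, this yields a constant $C_\xx>0$ and a sequence $(\rr_n)_1^\infty$ in $\QQ$ with $\rr_n\tendsto n \xx$ and $\dist(\xx,\rr_n)\leq C_\xx \phi\circ H(\rr_n)$ for every $n$. Combining the two inequalities gives
\[
\varepsilon\,\psi\circ H(\rr_n)\leq \dist(\xx,\rr_n)\leq C_\xx\,\phi\circ H(\rr_n),
\]
so $\phi\circ H(\rr_n)/\psi\circ H(\rr_n)\geq \varepsilon/C_\xx>0$ for all $n$.

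The key remaining step — and the only place where the hypothesis that $\psi$ is nonincreasing enters — is to show that $H(\rr_n)\to +\infty$. If not, after passing to a subsequence we may assume $H(\rr_n)\leq M$ for some $M>0$. Then $\psi\circ H(\rr_n)\geq \psi(M)$ by monotonicity, and the badly approximable inequality forces $\dist(\rr_n,\xx)\geq \varepsilon\psi(M)>0$, contradicting $\rr_n\tendsto n \xx$.

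Once $H(\rr_n)\to +\infty$ is established, the lower bound $\phi\circ H(\rr_n)/\psi\circ H(\rr_n)\geq \varepsilon/C_\xx$ directly contradicts hypothesis \eqref{phipsi}, completing the proof. I do not anticipate a genuine obstacle here: the main content is just recognizing that monotonicity of $\psi$ is exactly what prevents a badly approximable point from being approached by points of bounded height, and without that observation the ratio $\phi/\psi$ might not be evaluated on a sequence tending to infinity.
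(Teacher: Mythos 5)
Your proposal is correct and follows essentially the same argument as the paper's proof: combine the badly approximable inequality with the Dirichlet inequality to bound $\phi/\psi$ from below along the sequence $H(\rr_n)$, then use monotonicity and positivity of $\psi$ together with $\rr_n\to\xx$ to conclude $H(\rr_n)\to+\infty$, contradicting \eqref{phipsi}. The only cosmetic difference is that you establish $H(\rr_n)\to+\infty$ by contradiction with a bounded subsequence, whereas the paper deduces it directly from $\varepsilon\psi(H(\rr_n))\leq\dist(\rr_n,\xx)\to 0$.
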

\begin{proof}
Fix $\xx\in\BA_\psi$. If $\phi$ is Dirichlet, then there exist $C_\xx > 0$ and a sequence $(\rr_n)_1^\infty$ such that
\[
\dist(\rr_n,\xx) \leq C_\xx\phi(q_n) \text{ and } \rr_n\tendsto n \xx,
\]
where $q_n := H(\rr_n)$. Combining with \eqref{badlyapproximable} gives
\[
\varepsilon\psi(q_n)\leq C_\xx\phi(q_n);
\]
rearranging yields
\begin{equation}
\label{boundedfrombelow}
\frac{\phi(q_n)}{\psi(q_n)} \geq \frac{\varepsilon}{C_\xx} > 0.
\end{equation}
On the other hand, we have
\[
\varepsilon\psi(q_n)\leq \dist(\rr_n,\xx)\tendsto n 0;
\]
since $\psi$ is positive and nonincreasing this implies that $q_n\tendsto n +\infty$. Together with \eqref{boundedfrombelow}, this contradicts \eqref{phipsi}.
\end{proof}

The converse to Theorem \ref{theoremBAimpliesoptimality} does not hold in such generality (cf. Theorem \ref{theoremdirichletnoncobounded}), but rather holds only under the hypothesis that the underlying Diophantine space is $\sigma$-compact.


\begin{proposition}[Optimality implies existence of BA]
\label{propositionconverse}
Let $(X,\QQ,H)$ be a $\sigma$-compact Diophantine space. Then if $\psi$ is a bounded optimal Dirichlet function, then $\BA_\psi\neq \emptyset$.
\end{proposition}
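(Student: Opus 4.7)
The plan is to prove the contrapositive: assuming $\BA_\psi = \emptyset$, I will construct a Dirichlet function $\phi$ satisfying $\phi/\psi \to 0$, contradicting the optimality of $\psi$. The hypothesis $\BA_\psi = \emptyset$ rewrites as $\WA_\psi = X$, i.e.\ for every $\varepsilon > 0$ the open set
\[
U_\varepsilon \;:=\; \bigcup_{\rr \in \QQ} B\bigl(\rr,\;\varepsilon\,\psi(H(\rr))\bigr)
\]
equals $X$. This is the only way $\WA$ enters; the rest of the argument is a compactness + diagonal construction.

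Using $\sigma$-compactness, write $X = \bigcup_{n \ge 1} K_n$ with $K_n$ compact and $K_n \subseteq K_{n+1}$, and fix a sequence $\varepsilon_n \searrow 0$ (say $\varepsilon_n = 1/n$). For each $n$, compactness of $K_n \subseteq U_{\varepsilon_n}$ produces a finite set $F_n \subset \QQ$ whose $\varepsilon_n\psi(H(\cdot))$-balls cover $K_n$. Set $Q_n := \max\{H(\rr) : \rr \in F_n\}$; by padding each $F_n$ with a single rational of sufficiently large height (and discarding initial stages if needed), I arrange $0 = Q_0 < Q_1 < Q_2 < \cdots \to \infty$. Now define the staircase
\[
\phi(q) \;:=\; \varepsilon_n\,\psi(q) \qquad \text{when } Q_{n-1} < q \le Q_n.
\]
Then $\phi(q)/\psi(q) = \varepsilon_n \to 0$ as $q\to\infty$, so \eqref{phipsi} holds.

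To verify that $\phi$ is Dirichlet, fix $\xx\in X$ and let $n_0$ be minimal with $\xx\in K_{n_0}$. For each $n\ge n_0$ pick $\rr_n\in F_n$ with $\dist(\rr_n,\xx) < \varepsilon_n\psi(H(\rr_n))$. Since $H(\rr_n) \le Q_n$ lies in some interval $(Q_{k-1},Q_k]$ with $k\le n$, monotonicity of $(\varepsilon_n)$ gives
\[
\phi(H(\rr_n)) \;=\; \varepsilon_k\,\psi(H(\rr_n)) \;\ge\; \varepsilon_n\,\psi(H(\rr_n)) \;>\; \dist(\rr_n,\xx).
\]
Boundedness of $\psi$ enters at exactly one point: $\dist(\rr_n,\xx) < \varepsilon_n\|\psi\|_\infty \to 0$, so $\rr_n \to \xx$. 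Re-indexing the tail $(\rr_n)_{n\ge n_0}$ produces a Dirichlet sequence for $\xx$ with $C_\xx = 1$.

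The main obstacle is assembling infinitely many per-stage finite covers into a single function $\phi$ defined on all of $(0,+\infty)$; the staircase works precisely because the direction of monotonicity is favorable (an approximant arising at stage $n$ has height in some interval $(Q_{k-1},Q_k]$ with $k\le n$, so its $\phi$-value is $\varepsilon_k\psi \ge \varepsilon_n\psi$). The boundedness hypothesis cannot be dropped: without it, the covers at small $\varepsilon_n$ could employ rationals with $\psi(H(\rr_n))$ huge, so the chosen sequence would fail to converge to $\xx$ and the Dirichlet property of $\phi$ would collapse. This is consistent with Theorem~\ref{theoremdirichletnoncobounded}, where the optimal Dirichlet function $\psi_0\equiv 1$ is bounded but $\BA_{\psi_0} = \emptyset$ — showing that $\sigma$-compactness, not boundedness, is the hypothesis that actually fails in the non-cobounded Banach setting.
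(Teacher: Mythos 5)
Your proof is correct and follows essentially the same route as the paper's: a compactness argument producing finite covers $F_n$ with height bound $Q_n$, the staircase function $\phi(q)=\varepsilon_{n(q)}\psi(q)$ (identical to the paper's $\phi(q)=\psi(q)\max\{1/n: q\le Q_n\}$), and boundedness of $\psi$ used exactly once to get $\rr_n\to\xx$. Your explicit padding to force $Q_n\nearrow\infty$ is a small technical tidying of a point the paper leaves implicit, but not a different argument.
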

\begin{proof}
Let $(K_n)_1^\infty$ be an increasing sequence of compact sets whose union is $X$.

Suppose by contradiction that $\BA_\psi = \emptyset$. Then for each $n\in\N$ and for each $\xx\in X$, there exists $\rr\in\QQ$ such that
\[
\dist(\rr,\xx) < \frac{1}{n}\psi\circ H(\rr).
\]
Let $U_{\rr,n}$ be the set of all $\xx$ satisfying the above; then for each $n\in\N$, $(U_{\rr,n})_{\rr}$ is an open cover of $X$, and in particular an open cover of $K_n$. Let $(U_{\rr,n})_{\rr\in F_n}$ be a finite subcover, and let $Q_n = \max_{F_n}(H)$. Let
\[
\phi(q) = \psi(q)\max\{1/n:q \leq Q_n\}.
\]
Clearly $\phi(q)/\psi(q)\tendsto q 0$. We claim that $\phi$ is a Dirichlet function. Indeed, fix $\xx\in X$, and let $C_\xx = 1$. For all $n\in\N$ sufficiently large, we have $\xx\in K_n$. Fix such an $n$, and choose $\rr_n\in F_n$ so that $\xx\in U_{\rr_n,n}$. Then $q_n := H(\rr_n)\leq Q_n$. It follows that
\[
\phi(q_n)\geq \frac{1}{n}\psi(q_n) > \dist(\rr_n,\xx).
\]
Since $\psi$ is bounded, this implies that $\rr_n\tendsto n \xx$. Thus $\xx$ is $\phi$-approximable. Thus $\phi$ is a Dirichlet function, and so $\psi$ is not an optimal Dirichlet function.

\ignore{
Suppose by contradiction that $\BA_\psi = \emptyset$. Then for each $\varepsilon > 0$ and for each $x\in X$, there exists $\rr\in\QQ$ such that
\[
\dist(x,\rr) < \varepsilon\psi(q).
\]
Let $U_{\rr,\varepsilon}$ be the set of all $x$ satisfying the above; then for each $\varepsilon > 0$, $(U_{\rr,\varepsilon})_{\rr}$ is an open cover of $X$. Let $(U_{\rr,\varepsilon})_{\rr\in F_\varepsilon}$ be a finite subcover, and let $Q_\varepsilon = \max\{q:\rr\in F_\varepsilon\}$. Let
\[
\phi(q) = \psi(q)\max\{\varepsilon > 0:q \leq Q_\varepsilon\}.
\]
Clearly $\phi(q)/\psi(q)\tendsto q 0$. We claim that every point of $X$ is $\phi$-approximable, which demonstrates that $\psi$ is not an optimal Dirichlet function. Indeed, fix $x\in X$. Fix $\varepsilon > 0$, and choose $\rr\in F_\varepsilon$ so that $x\in U_{\rr,\varepsilon}$. Then $q\leq Q_\varepsilon$. It follows that
\[
\phi(q)\geq \varepsilon\psi(q) > \dist(x,\rr).
\]
As $\varepsilon$ tends to zero, the $\rr$ such that the above holds must be distinct. This implies that $x$ is $\phi$-approximable.
}
\end{proof}

\subsection{Hardy fields}
\label{subsectionhardyfields}
One possible reaction to the phenomenon of Lemma \ref{lemmapsiQ} is to insist that the functions $\psi_{\mathbf Q}$ defined in that lemma are pathological. One way to make this rigorous is to consider the notion of a \emph{Hardy field}.

\begin{definition}
A \emph{germ at infinity} is an equivalence class of $C^\infty$ functions from $\Rplus$ to $\R$, where two functions are considered equivalent if they agree on all sufficiently large values.

A \emph{Hardy field} is a field of germs at infinity which is closed under differentiation.
\end{definition}

\begin{remark}
If $\psi\not\equiv 0$ is an element of a Hardy field, then by definition, there is a $C^\infty$ function from $\Rplus$ to $\R$ which agrees with $1/\psi$ on all sufficiently large values. This implies that $\psi\neq 0$ on all sufficiently large values; since $\psi$ is continuous, either $\psi > 0$ or $\psi < 0$ on all sufficiently large values.
\end{remark}

A standard example of a Hardy field is the class of \emph{Hardy $L$-functions}, which is the class all functions which can be written using the symbols $+,-,\times,\div,\exp$ and $\log$ together with the constants and the identity function; cf. \cite[Chapter III]{Hardy}. From now on, we will consider functions to be ``non-pathological'' if their germs at infinity are elements of some fixed Hardy field.


\begin{observation}
\label{observationhardyfield}
If the germs of $\psi$ and $\phi$ are elements of the same Hardy field, then either $\psi\prec\phi$ or $\phi\prec\psi$. Moreover, $\psi\not\succ\phi$ if and only if $\frac{\phi}{\psi}\to 0$.
\end{observation}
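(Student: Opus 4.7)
The plan is to reduce the statement to the standard fact that a nonzero element of a Hardy field has a well-defined limit at infinity in $[-\infty,+\infty]$.

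First I would observe that since $\psi,\phi:(0,+\infty)\to(0,+\infty)$ are positive (being Dirichlet functions), their germs are nonzero and eventually positive, and by the preceding remark $1/\psi$ lies in the same Hardy field. Closure of the field under multiplication then puts $\phi/\psi$ in the Hardy field, and it is eventually positive. Next I would invoke the classical fact that every nonzero germ $f$ in a Hardy field is eventually monotonic---because $f'$ also lies in the field, hence has constant sign eventually---so $f$ has an extended-real limit at infinity. Applying this to $\phi/\psi$, set $L := \lim_{q\to\infty} \phi(q)/\psi(q) \in [0,+\infty]$.

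The rest is a short case analysis. If $L \in (0,+\infty)$, then $\phi/\psi$ is eventually bounded above and below by positive constants, so both $\phi\succ\psi$ and $\psi\succ\phi$ hold. If $L = 0$, then eventually $\phi \leq \psi$, giving $\phi\succ\psi$ (equivalently $\psi\prec\phi$), while $\psi\not\succ\phi$, since $\psi\leq C\phi$ eventually would force $\phi/\psi\geq 1/C$ eventually, contradicting $L = 0$. The case $L = +\infty$ is symmetric and yields $\psi\succ\phi$. In every case at least one of $\psi\succ\phi$ or $\phi\succ\psi$ holds, proving the first assertion.

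For the second assertion, note that $\psi\succ\phi$ is equivalent to $\phi/\psi$ being eventually bounded below by a positive constant, which, given that $L$ exists in $[0,+\infty]$, is in turn equivalent to $L > 0$. Hence $\psi\not\succ\phi$ is equivalent to $L = 0$, i.e.\ to $\phi/\psi\to 0$. The only non-routine input is the eventual monotonicity of Hardy-field germs and the resulting existence of a limit; I anticipate no further obstacle beyond invoking this standard property.
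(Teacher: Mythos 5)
Your proposal is correct and follows essentially the same route as the paper, which simply cites the well-known fact that $\lim_{q\to\infty}\phi(q)/\psi(q)$ exists in $[0,+\infty]$ and leaves the case analysis implicit. You merely supply the standard justification (closure of the Hardy field under division and differentiation, hence eventual monotonicity of $\phi/\psi$) together with the routine deduction of both assertions from the existence of the limit.
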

\begin{proof}
Both assertions follow from the well-known fact that $\lim_{q\to\infty}\frac{\phi}{\psi}(q)$ exists.
\end{proof}

%

The second part of Observation \ref{observationhardyfield} can be taken as a motivation for Definition \ref{definitiondirichletoptimal}. Indeed, it shows that if a Hardy field is fixed and all functions are assumed to be elements of that Hardy field, then the notions of strong optimality and optimality agree.

\begin{remark}
\label{remarkuncountablymany}
The first part of Observation \ref{observationhardyfield} shows that if a Hardy field is fixed and all functions are assumed to be elements of that Hardy field, then any two optimal Dirichlet functions $\phi$ and $\psi$ ``agree up to a constant'', i.e. their ratio $\frac{\phi}{\psi}$ is bounded from above and below. If the restriction to a Hardy field is not made, then Lemma \ref{lemmapsiQ} can be used to show that there are uncountably many optimal Dirichlet functions on $(\R,\Q,H_\std)$, no two of which are comparable. (The Dirichlet function $\psi_{\mathbf Q}$ is optimal because $\psi_{\mathbf Q}\leq \psi_2$.)
\end{remark}

\begin{remark}
Restricting to elements of a Hardy field is also useful in answering questions 2-4 of \sectionsymbol\ref{subsectiondiophantinequestions}. To see this, note that the map $\psi\mapsto\BA_\psi$ is order-preserving, i.e. $\psi\prec\phi$ implies $\BA_\psi\subset\BA_\phi$. Similarly, the map $\psi\mapsto\WA_\psi$ is order-reversing. Since in a Hardy field, $\prec$ is a total order (Observation \ref{observationhardyfield}), it is possible to prescribe the values of $\HD(\BA_\psi)$ and $\HD(\WA_\psi)$ on all $\psi$ in a Hardy field by prescribing the values of $\HD(\BA_\psi)$ and $\HD(\WA_\psi)$ for a relatively small collection of $\psi$s. Using this principle, in the case of Banach spaces it is possible to answer questions 2-4 completely (except for question 3 in the case of strongly discrete lattices) based on the information given in \sectionsymbol\ref{subsectionmaintheorems}. Details are left to the reader.
\end{remark}

\ignore{
\section{New Section}

\begin{theorem}
Fix $\varepsilon > 0$. For every $\xx\in X$ and for every $q\in\N$, there exists $\pp\in\Lambda$ such that
\[
\left\|\xx - \frac{\pp}{q}\right\| \leq \frac{\codiam(\Lambda) + \varepsilon}{q}\cdot
\]
\end{theorem}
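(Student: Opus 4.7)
The plan is to deduce the bound directly from the definition of $\codiam(\Lambda)$ and then to read off both corollaries (uniform Dirichletness and optimality of $\psi_1$) as essentially immediate consequences. For the existence of $\pp$, I would apply the codiameter bound to the scaled point $q\xx$: since $\dist(q\xx,\Lambda)\leq\codiam(\Lambda)$ and the distance is an infimum, for any $\varepsilon>0$ one can select $\pp\in\Lambda$ with $\|q\xx-\pp\|\leq\codiam(\Lambda)+\varepsilon$. Dividing by $q$ yields the claimed bound.

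To conclude that $\psi_1$ is uniformly Dirichlet, I would fix $\xx\in X$, apply the above to produce $\pp_q\in\Lambda$ for each $q\in\N$, and set $\rr_q=\pp_q/q$. Since $q\rr_q=\pp_q\in\Lambda$, the height satisfies $H_\std(\rr_q)\leq q$, so
\[
\|\xx-\rr_q\|\leq\frac{\codiam(\Lambda)+\varepsilon}{q}\leq(\codiam(\Lambda)+\varepsilon)\,\psi_1(H_\std(\rr_q)).
\]
The same bound shows $\rr_q\to\xx$, and the constant $C=\codiam(\Lambda)+\varepsilon$ is manifestly independent of $\xx$. For optimality of $\psi_1$, I would invoke Theorem~\ref{theoremBAimpliesoptimality} with the nonincreasing function $\psi_1$; the required nonemptiness of $\BA_{\psi_1}$ is furnished by Theorem~\ref{theoremkhinchincobounded}, which in fact asserts that $\BA_{\psi_1}$ is prevalent in $X$.

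There is essentially no serious obstacle: the substance of the theorem is unwinding the definition of codiameter. The one subtlety worth flagging is that $\dist(q\xx,\Lambda)$ need not be attained in infinite dimensions --- a discrete lattice need not be strongly discrete, so a bounded ball can meet $\Lambda$ in infinitely many points and the infimum in the distance can fail to be realized --- which is precisely why the statement carries an arbitrary $\varepsilon>0$ rather than concluding with $\codiam(\Lambda)$ itself. This slack is harmless since it is absorbed into the uniform Dirichlet constant.
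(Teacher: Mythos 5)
Your proof is correct and follows the paper's argument exactly: the paper likewise notes that $\dist(\xx,\Lambda/q)=\frac1q\dist(q\xx,\Lambda)\leq\codiam(\Lambda)/q$, picks $\pp_q$ within $\varepsilon$ of the infimum, observes $\pp_q/q\to\xx$ to get uniform Dirichletness with $C=\codiam(\Lambda)+\varepsilon$, and derives optimality from Theorem \ref{theoremkhinchincobounded} via Theorem \ref{theoremBAimpliesoptimality}. Your remark about the infimum not being attained is exactly the reason the paper includes the $\varepsilon$.
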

The proof is trivial; simply note that $\dist(q\xx,\Lambda)$ is a member of the supremum defining $\codiam(\Lambda)$, then divide by $q$. The $\varepsilon$ term is necessary because we cannot know whether or not the infimum defining $\dist(q\xx,\Lambda)$ is achieved.

By itself, Theorem \ref{theoremdirichletcobounded} is not interesting; in fact, it is true in finite dimensions as well. Nevertheless, we have the following:

\begin{theorem}
\label{theoremkhinchincobounded}

\end{theorem}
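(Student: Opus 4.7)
The plan is to exhibit a compactly supported probability measure $\mu$ on $X$ that is transverse to $\WA_{\psi_1}$; prevalence of $\BA_{\psi_1}$ follows, and Proposition \ref{propositionHDinfinity} then yields $\HD(\BA_{\psi_1}) = +\infty$. First I would use the cobounded hypothesis to construct a bounded linearly independent sequence $(\ee_n)_{n\ge 1}\subset \Lambda$ with $\|\ee_n\|\le M$ for some $M$, by induction on $n$: given $\ee_1,\ldots,\ee_k$, the quotient Banach space $X/\mathrm{span}(\ee_1,\ldots,\ee_k)$ is still infinite-dimensional, so I can pick a vector of quotient norm $2\codiam(\Lambda)$, lift it to $\yy\in X$ with $\|\yy\|\le 3\codiam(\Lambda)$, and use coboundedness to find $\pp\in\Lambda$ with $\|\yy-\pp\|\le \codiam(\Lambda)$; the image of $\pp$ in the quotient has norm $\ge \codiam(\Lambda)>0$, so $\pp\notin\mathrm{span}(\ee_1,\ldots,\ee_k)$, and we set $\ee_{k+1}:=\pp$. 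Then I would take $\mu$ to be the pushforward of $\bigotimes_n U[0,2^{-n}]$ under the continuous map $(t_n)\mapsto \sum_n t_n\ee_n$ from the Tychonoff cube $\prod_n[0,2^{-n}]$ into $X$; the series converges absolutely because $\sum_n 2^{-n}M<\infty$, and the image is compact.

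The main task is to show that for every $\vv\in X$, $\mu$-almost every $\xx$ satisfies $\inf_{q\in\N}\dist(q(\vv+\xx),\Lambda)>0$, i.e.\ $\vv+\xx\in\BA_{\psi_1}$. I would do this by a first Borel--Cantelli argument: bound $\mu\{\xx:\dist(q(\vv+\xx),\Lambda)<\eta\}$ for each $q\in\N$ and show that the sum over $q$ tends to $0$ as $\eta\to 0$. To get the per-$q$ bound I would use bounded biorthogonal functionals $\phi_n\in X^*$ satisfying $\phi_n(\ee_m)=\delta_{nm}$ and, crucially, $\phi_n(\Lambda)\subset\Z$; then
\[
\dist(q(\vv+\xx),\Lambda)\;\geq\; \|\phi_n\|^{-1}\,\dist\bigl(q\phi_n(\vv)+qt_n,\,\Z\bigr)
\]
for each $n$, and since the right-hand side depends only on $t_n$, the events for different $n$ are independent under $\mu$. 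The joint probability therefore factors as a product of one-dimensional Diophantine estimates of the form $\min(1,C\eta(1+2^n/q))$, giving a total bound of order $q^{-L}\cdot 2^{-L(L-1)/2}$ with $L\asymp \log_2(1/\eta)$; summation over $q\in\N$ converges (by comparison with $\zeta(L)$) and tends to $0$ as $\eta\to 0$, which is exactly what is needed.

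The principal obstacle is constructing the biorthogonal functionals $\phi_n$ with the discreteness property $\phi_n(\Lambda)\subset\Z$. For canonical examples such as $\Z^\N\subset\ell^\infty$ the standard coordinate projections do the job trivially, but for a general cobounded lattice this requires refining the inductive construction of $(\ee_n)$: one would appeal to Mazur's theorem to make $(\ee_n)$ a basic sequence in $X$, and arrange at each step that $\ee_{n+1}$ is primitive in $\Lambda$ modulo the previously chosen vectors, so that Hahn--Banach extensions of the natural coordinate functionals on $\overline{\mathrm{span}(\ee_n)}$ remain integer-valued on all of $\Lambda$. Verifying this is the subtle technical point where the cobounded hypothesis is essentially used beyond the preliminary construction in the first step.
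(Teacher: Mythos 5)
Your overall architecture is the same as the paper's: build a compactly supported probability measure $\mu$ as an infinite ``convolution'' along a carefully chosen sequence of directions, show $\mu(\WA_{\psi_1}+\vv)=0$ for every $\vv$ by a union-bound/Borel--Cantelli estimate, and invoke Proposition \ref{propositionHDinfinity}. The divergence, and the problem, is in how you obtain the per-$q$ smallness estimate.

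Everything in your argument is routed through bounded biorthogonal functionals $\phi_n\in X^*$ with $\phi_n(\ee_m)=\delta_{nm}$ and $\phi_n(\Lambda)\subset\Z$, and their existence is a genuine gap, not a deferred technicality. A group homomorphism $\Lambda\to\Z$ dual to $\ee_n$ does exist algebraically (in the separable case $\Lambda$ is free abelian), but it then determines an $\R$-linear map on $\R\Lambda$, which is dense in $X$, and nothing in your construction controls its norm there, so it need not extend to a bounded functional. Going the other way, Mazur plus Hahn--Banach produces bounded functionals biorthogonal to a basic subsequence, but a Hahn--Banach extension gives no control whatsoever over the values at the many lattice points outside $\overline{\mathrm{span}}(\ee_n)$ (and even a lattice point inside the closed span need not lie in the algebraic span, so integrality can fail there too); arranging $\ee_{n+1}$ to be ``primitive modulo the previous vectors'' does not repair this. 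The hypotheses ``cobounded lattice in an infinite-dimensional Banach space'' simply do not supply a nonzero bounded functional that is integer-valued on $\Lambda$ (consider $L^\infty([0,1],\Z)\leq L^\infty([0,1],\R)$). Since the independence, the product bound, and the summability in $q$ all depend on these $\phi_n$, the proof does not go through as written.

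The paper avoids duality entirely. It takes a $1$-separated sequence of \emph{unit} vectors $(\ee_i)$ (a Riesz-lemma/quotient construction using only infinite-dimensionality --- coboundedness is in fact never used in this proof), puts $\lambda=16$, $\vv_{i,n}=\varepsilon_\Lambda\ee_i/(4\lambda^n)$ for $i\leq\lambda^{2n}$, and shows by a purely metric separation argument that for each fixed $q\leq\lambda^n$ at most one of the $\lambda^{2n}$ balls $B(\xx+\vv_{i,n},\varepsilon_\Lambda/(16\lambda^n))$ can meet $\Lambda/q$: two hits would force two points of $\Lambda/q$ to be at distance roughly $\varepsilon_\Lambda/\lambda^n$ but nonzero, contradicting the $\varepsilon_\Lambda/q$-separation of $\Lambda/q$. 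Summing over $q\leq\lambda^n$ gives probability at most $\lambda^{-n}$ for the $n$-th digit, which is summable, and the rest is the same Borel--Cantelli scheme you propose. Replacing your functional-analytic step with this separation/counting argument is what is needed.
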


we can test the optimality of Theorem \ref{theoremdirichletcobounded} by letting
\[
\BA_\Lambda := \left\{\xx\in X: \exists \varepsilon > 0 \all \pp\in\Lambda \all q\in\N \;\; \left\|\xx - \frac{\pp}{q}\right\| \geq \frac{\varepsilon}{q}\right\}
\]
and stating that if $\BA_\Lambda \neq \emptyset$, then Theorem \ref{theoremdirichletcobounded} cannot be improved by more than a constant. (A more rigorous discussion is given below in \ref{}.)

\begin{itemize}
\item $X$ is a Banach space, and
\item $\Lambda\leq X$ is a \emph{lattice}, i.e. discrete subgroup of $X$ satisfying $\cl{\R\Lambda} = X$, and

\item $H_\std:\Q\Lambda\to\N$ is the standard height function (see Example \ref{examplebanachdiophantine}).
\end{itemize}

\begin{proposition}
\label{propositionHDinfinity}
Non-shy sets (and in particular prevalent sets) have Hausdorff dimension $+\infty$.
\end{proposition}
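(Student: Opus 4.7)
The plan is to reduce everything to testing the dimension against $n$-dimensional Lebesgue measure on a carefully chosen subspace, for each $n\in\N$. The finite-dimensional case is immediate since there ``non-shy'' coincides with ``positive Lebesgue measure,'' so I would dispose of it in one line and assume $\dim(X) = +\infty$ throughout.

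For the main case, I would proceed as follows. Fix $n\in\N$, and pick any $n$-dimensional subspace $X_n\leq X$. Let $\mu_n$ be a normalized Lebesgue measure supported on the closed unit ball of $X_n$; this is a compactly supported probability measure on $X$, precisely the kind of measure appearing in the definition of a shy set. Since $S$ is assumed non-shy, $\mu_n$ cannot be transverse to $S$, so there must exist some translate $\vv\in X$ with $\mu_n(S+\vv) > 0$.

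The key observation is then that the pushforward of $n$-dimensional Lebesgue measure on $X_n$ assigns measure zero to every subset of $X$ of Hausdorff dimension strictly less than $n$. (This is immediate from the definition of Hausdorff measure and the fact that $\mu_n$ is absolutely continuous with respect to $n$-dimensional Hausdorff measure on $X_n$.) Combined with $\mu_n(S+\vv)>0$, this forces $\HD(S+\vv)\geq n$, and since Hausdorff dimension is translation-invariant, $\HD(S)\geq n$. Letting $n\to\infty$ yields $\HD(S) = +\infty$.

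I do not anticipate any serious obstacle: the proof is essentially a translation between the definition of shyness and the standard fact that $n$-dimensional Lebesgue measure detects Hausdorff dimension $n$. The only mild subtlety is ensuring that the chosen $\mu_n$ actually qualifies as a ``compactly supported probability measure'' in the sense required by the definition of shy set, which is transparent because $X_n$ is finite-dimensional so its closed unit ball is compact in $X$.
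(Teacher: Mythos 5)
Your proof is correct and follows exactly the same route as the paper's: for each $n$, take normalized Lebesgue measure on the unit ball of an $n$-dimensional subspace, use non-shyness to find a translate of $S$ with positive measure, conclude $\HD(S)\geq n$, and let $n\to\infty$. No differences worth noting.
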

\begin{proof}
Let $S\subset X$ be a non-shy set. Fix $n\in\N$, and let $\mu_n$ be an Ahlfors $n$-regular measure on $X$ with compact support (for example, Lebesgue measure on an appropriate subset). Since $S$ is not shy, there exists $\vv\in X$ such that $\mu_n(S + \vv) > 0$. Then $\HD(S) = \HD(S + \vv) \geq n$. Since $n$ was arbitrary, $\HD(S) = +\infty$.
\end{proof}

\section{Diophantine spaces}

\begin{definition}
\label{definitiondiophantinespace}
A \emph{Diophantine space} is triple $(X,\QQ,H)$, where $X$ is a complete metric space, $\QQ$ is a dense subset of $X$, and $H:\QQ\to(0,+\infty)$.
\end{definition}

The canonical example is the following:
\begin{example}
\label{exampleRd}
Fix $d\in\N$, and let $H_\std:\Q^d\to\N$ be the standard height function, i.e. for each $\rr\in\Q^d$, $q = H_\std(\rr)$ is the smallest integer such that $q\rr\in\Z^d$. Then $(\R^d,\Q^d,H_\std)$ is a Diophantine space.
\end{example}

The papers \cite{CCM, DodsonEveritt, DodsonKemble, FKMS, FishmanSimmons1, FishmanSimmons3, FSU4, Kristensen} can all be viewed as talking implicitly about Diophantine spaces other than $(\R^d,\Q^d,H_\std)$. In this paper we will be interested in the following generalization of Example \ref{exampleRd} to the setting of Banach spaces:

\begin{example}
\label{examplebanachdiophantine}
Let $X$ be a Banach space. We will call a set $\Lambda\leq X$ be a \emph{lattice} if
\begin{itemize}
\item[(I)] $\Lambda$ is an additive subgroup of $X$,
\item[(II)] $\Lambda$ is (topologically) discrete, and
\item[(III)] $\R\Lambda$ is dense in $X$, or equivalently, no proper closed subspace of $X$ contains $\Lambda$.
\end{itemize}
Given any lattice $\Lambda\leq X$, we define the \emph{standard height function} $H_\std:\Q\Lambda\to\N$ in the same way as above, i.e. for each $\rr\in\Q\Lambda$, $q = H_\std(\rr)$ is the smallest integer such that $q\rr\in\Lambda$. Then $(X,\Q\Lambda,H_\std)$ is a Diophantine space.
\end{example}
Example \ref{exampleRd} is the special case of Example \ref{examplebanachdiophantine} which occurs when $X = \R^d$ and $\Lambda = \Z^d$. For more special cases, see Section \ref{sectionexamples} below.

The classical theorems of Diophantine approximation can all be reformulated as statements about the Diophantine spaces $(\R^d,\Q^d,H_\std)$. In particular, to reformulate Dirichlet's theorem we make the following definition:

\begin{definition}
\label{definitiondirichletfunction}
Let $(X,\QQ,H)$ be a Diophantine space. A function $\psi:\Rplus\to(0,+\infty)$ is called a \emph{Dirichlet function} for $(X,\QQ,H)$ if for every $\xx\in X$, there exists $C = C_\xx > 0$ and a sequence $(\rr_n)_1^\infty$ such that
\[
\dist(\rr_n,\xx) \leq C \psi\circ H(\rr_n) \text{ and } \rr_n\tendsto n \xx.
\]
\end{definition}
With this terminology, Dirichlet's classical theorem is equivalent (up to a constant) to the following:
\begin{theorem}[Dirichlet's theorem, reformulated]
\label{theoremdirichletreformulated}
Fix $d\in\N$. Then the function
\begin{equation}
\label{dirichletstandard}
\psi(q) = q^{-(1 + 1/d)}
\end{equation}
is a Dirichlet function for the Diophantine space $(\R^d,\Q^d,H_\std)$.
\end{theorem}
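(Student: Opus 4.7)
The plan is to view this statement as a translation of classical Dirichlet (Theorem \ref{theoremdirichlet}, already recorded above) into the sequential language of Definition \ref{definitiondirichletfunction}. Since the classical theorem asserts that for every $\xx\in\R^d$ and every $Q\in\N$ there exist $\pp\in\Z^d$ and $1\leq q\leq Q$ with $\|\xx-\pp/q\|\leq C/(qQ^{1/d})$ for a constant $C$ depending only on $d$, it is enough to extract from this a single sequence of approximations that both satisfies the $\psi_{1+1/d}$-bound and converges to $\xx$.

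The construction splits into two cases. If $\xx\in\Q^d$, the constant sequence $\rr_n\equiv\xx$ trivially satisfies the definition with any $C_\xx>0$, since $\dist(\rr_n,\xx)=0$. If $\xx\notin\Q^d$, I would set $Q_n=n$ and apply the classical theorem at each scale to produce $\rr_n=\pp_n/q_n$ with $1\leq q_n\leq n$. The single bound $q_n\leq n$ then has two consequences. First,
\[
\|\xx-\rr_n\|\leq \frac{C}{q_n\,n^{1/d}}\leq \frac{C}{q_n^{1+1/d}} = C\psi(q_n)\leq C\psi(H_\std(\rr_n)),
\]
where the last inequality uses $H_\std(\rr_n)\leq q_n$ together with the fact that $\psi(q)=q^{-(1+1/d)}$ is decreasing. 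Second,
\[
\|\xx-\rr_n\|\leq \frac{C}{n^{1/d}}\tendsto n 0,
\]
so $\rr_n\tendsto n \xx$. Both clauses of Definition \ref{definitiondirichletfunction} are therefore satisfied with $C_\xx=C$.

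There is essentially no obstacle here: the deep content, namely the pigeonhole argument underlying classical Dirichlet, is imported as a black box via Theorem \ref{theoremdirichlet}. The only bookkeeping subtlety is that the representation $\pp_n/q_n$ produced by the pigeonhole output need not be in reduced form, so $H_\std(\rr_n)$ may be strictly smaller than $q_n$; the monotonicity of $\psi$ absorbs this automatically, since the Dirichlet inequality only improves when the smaller value $H_\std(\rr_n)$ is substituted.
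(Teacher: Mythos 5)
Your proof is correct and is exactly the derivation the paper intends: the theorem is treated as an immediate consequence of the classical Dirichlet statement (Theorem \ref{theoremdirichlet}), obtained by letting $Q = n \to \infty$ and noting that $q_n \leq n$ yields both the $\psi_{1+1/d}$-bound and the convergence $\rr_n \tendsto{n} \xx$ (the same extraction the paper carries out explicitly in the proof of Lemma \ref{lemmapsiQ}). Your remark that $H_\std(\rr_n)$ may be smaller than $q_n$ and that the monotonicity of $\psi$ absorbs this is a correct and worthwhile piece of bookkeeping that the paper leaves implicit.
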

\begin{remark}
\label{remarkC}
In Theorem \ref{theoremdirichletreformulated}, we may let $C = 1$ (or $C = 1/\sqrt 5$ in the case $d = 1$) in Definition \ref{definitiondirichletfunction}. However, there are natural examples where $C_\xx$ depends on $\xx$; see \cite{FKMS, FSU4}.
\end{remark}
\begin{remark}
\label{remarktrivial}
Trivially, any Diophantine space has $\psi\equiv 1$ as a Dirichlet function.
\end{remark}

To generalize the notion of badly approximable points we make the following definition:
\begin{definition}
\label{definitionbadlyapproximable}
Let $\psi:\Rplus\to(0,+\infty)$ be any function. A point $\xx\in X$ is called \emph{badly approximable} with respect to $\psi$ if there exists $\varepsilon > 0$ such that for all $\rr\in\QQ$ we have
\begin{equation}
\label{badlyapproximable}
\dist(\rr,\xx) \geq \varepsilon\psi\circ H(\rr).
\end{equation}
The set of points which are badly approximable with respect to $\psi$ will be denoted $\BA_\psi$, and its complement will be denoted $\WA_\psi$.
\end{definition}
Note that $\BA_\psi$ has been defined for any function $\psi$, although it makes the most sense when $\psi$ is a Dirichlet function.

The classical theorem of Jarn\'ik can now be restated as follows:
\begin{theorem}
\label{theoremjarnikreformulated}
Fix $d\in\N$. Then for the Diophantine space $(\R^d,\Q^d,H_\std)$ we have
\[
\HD(\BA_\psi) = d,
\]
where $\psi$ is as in \eqref{dirichletstandard}.
\end{theorem}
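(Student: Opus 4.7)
The upper bound $\HD(\BA_\psi) \le d$ is immediate from $\BA_\psi \subset \R^d$, so the entire content lies in the lower bound. My plan is to use Schmidt's game: I will show that $\BA_\psi$ is $\alpha$-\emph{winning} for some $\alpha \in (0, 1/2)$, and then invoke Schmidt's theorem that every $\alpha$-winning set in $\R^d$ has Hausdorff dimension $d$.

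Recall the $(\alpha,\beta)$-game: Alice and Bob alternate, picking nested balls $B_1 \supset B_1' \supset B_2 \supset B_2' \supset \cdots$ with $\radius(B_n') = \beta\radius(B_n)$ (Bob) and $\radius(B_{n+1}) = \alpha\radius(B_n')$ (Alice), and a set is $\alpha$-winning if for every $\beta$ Alice can force $\bigcap_n B_n$ into it. Fixing $\alpha$ and then a sufficiently small $\varepsilon > 0$ depending on $\alpha,\beta$, Alice's strategy is, at stage $n$, to pick her sub-ball $B_{n+1} \subset B_n'$ so as to avoid every forbidden ball $B(\pp/q, \varepsilon\psi(q))$ whose denominator $q$ lies in a window $[Q_n, Q_{n+1})$ calibrated by $Q_n \asymp \radius(B_n)^{-d/(d+1)}$. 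This choice --- the Dirichlet exponent in disguise --- ensures that each $q\in\N$ falls in exactly one window and that for $q$ in the $n$-th window the forbidden ball is already much smaller than $\radius(B_{n+1})$, so the avoidance persists forever. The limit point $\xx = \bigcap_n B_n$ then satisfies $\|\xx - \pp/q\| \ge \varepsilon\psi(q)$ for every rational and hence lies in $\BA_\psi$.

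Feasibility of Alice's move at stage $n$ reduces to a standard volume count: for each $q$ the lattice $\tfrac1q\Z^d$ contributes $O((q\radius(B_n'))^d + 1)$ points to $B_n'$, each surrounded by a forbidden ball of volume $\asymp \varepsilon^d q^{-(d+1)}$. Summing over $q \in [Q_n, Q_{n+1})$ bounds the total forbidden volume by a constant times $\varepsilon^d \radius(B_n')^d \log(Q_{n+1}/Q_n)$, which, for window ratio bounded and $\varepsilon$ small, is well below the volume of any sub-ball of radius $\alpha\radius(B_n')$; a safe sub-ball therefore always exists.

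The main obstacle is the simultaneous calibration of three scales --- Bob's radius, the denominator window, and Alice's shrinkage --- and the forced appearance of the exponent $d/(d+1)$ on both sides, reflecting the fact that Dirichlet's theorem is the upper envelope which $\BA_\psi$ just barely misses. The only nontrivial external input is Schmidt's dimension theorem for $\alpha$-winning sets, which I take as a black box. An alternative, avoiding the game formalism, is a direct Cantor-set construction inside $\BA_\psi$ (analogous to bounded partial quotients in the $d=1$ case) together with the mass distribution principle, yielding Hausdorff dimension arbitrarily close to $d$ and hence, in the limit, $\HD(\BA_\psi) = d$.
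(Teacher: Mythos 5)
This statement is the classical Jarn\'ik--Schmidt theorem; the paper does not prove it but cites it (Jarn\'ik 1928 for $d=1$, Schmidt 1969 for general $d$), so your proposal is being measured against the standard proofs rather than an argument in the text. Your architecture is the right one --- show $\BA_\psi$ is $\alpha$-winning and invoke Schmidt's full-dimension theorem for winning sets --- and the calibration $Q_n \asymp \rho(B_n)^{-d/(d+1)}$ is correct. The gap is in the feasibility step: the ``standard volume count'' you use to guarantee a safe sub-ball does not close.

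The culprit is the ``$+1$'' in your count $O((q\rho)^d+1)$. At your scales $q\asymp Q_n\asymp\rho^{-d/(d+1)}$ one has $(q\rho)^d\ll 1$, so the count is dominated by the possibility of one rational of reduced denominator $q$ in $B_n'$ for each of the $\asymp Q_n$ values of $q$ in the window; summing $\varepsilon^d q^{-(d+1)}$ over the window gives $\asymp \varepsilon^d Q_n^{-d}\asymp\varepsilon^d\rho^{d^2/(d+1)}$, which exceeds the volume $\asymp\rho^d$ of $B_n'$ by the unbounded factor $\varepsilon^d\rho^{-d/(d+1)}$ --- you kept only the subdominant term $\varepsilon^d\rho^d\log(Q_{n+1}/Q_n)$. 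Nor is this a phantom worst case: for $d=2$ the ball of radius $Q^{-3/2}$ about $(1/q,0)$ with $q\asymp Q$ contains the roughly $Q^{1/2}$ points $(1/q',0)$ with $|q'-q|\leq Q^{1/2}$, all with distinct denominators in one window, and their forbidden balls have total area about $Q^{1/2}\varepsilon^2\rho^2\gg\rho^2$. What rescues the proof is a structural fact, the simplex lemma: all rationals $\pp/q$ with $q\leq Q$ lying in a ball of radius $c_dQ^{-(d+1)/d}$ are contained in a single affine hyperplane (otherwise $d+1$ of them would span a simplex of volume at least $\frac{1}{d!}Q^{-(d+1)}$, which does not fit). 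Alice's move is then to place $B_{n+1}$ outside the $\varepsilon\psi(Q_n)$-neighborhood of one hyperplane rather than outside a union of many balls, which is possible for $\alpha$ small once $\varepsilon$ is small relative to $\alpha,\beta$. In $d=1$ the analogous input is the separation $|p_1/q_1-p_2/q_2|\geq 1/(q_1q_2)$, which leaves at most one dangerous rational per window; in no dimension is the step a measure estimate. With that substitution your strategy goes through, and your Cantor-set alternative is likewise viable but needs the same non-accumulation input.
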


\subsection{Optimality vs. the existence of badly approximable points}
Theorem \ref{theoremjarnikreformulated} is usually taken as demonstrating the optimality of the Dirichlet function \eqref{dirichletstandard}, since it demonstrates that $\BA_\psi\neq\emptyset$, and therefore that ``Dirichlet's theorem cannot be improved by more than a constant''. But what precisely is meant by this? Here is one possible definition:
\begin{definition}
\label{definitiondirichletoptimal}
A Dirichlet function $\psi$ is \emph{optimal} (with respect to a Diophantine space $(X,\QQ,H)$) if there is no Dirichlet function $\phi$ satisfying
\begin{equation}
\label{phipsi}
\frac{\phi(q)}{\psi(q)}\tendsto q 0.
\end{equation}
\end{definition}
According to this definition, the existence of badly approximable points does indeed prove the optimality of \eqref{dirichletstandard}:

\begin{proposition}[Existence of BA implies optimality]
\label{propositionBAimpliesoptimality}
Let $(X,\QQ,H)$ be a Diophantine space. If $\psi:\Rplus\to(0,+\infty)$ is any nonincreasing function and if $\BA_\psi\neq\emptyset$, then \eqref{phipsi} does not hold for any Dirichlet function $\phi$. In particular, if $\psi$ is a Dirichlet function for which $\BA_\psi\neq\emptyset$, then $\psi$ is optimal.
\end{proposition}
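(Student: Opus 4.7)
The plan is to argue by contradiction: fix a witness $\xx \in \BA_\psi$, suppose $\phi$ is a Dirichlet function with $\phi/\psi \to 0$, and derive a contradiction by comparing the lower bound on $\dist(\rr,\xx)$ coming from badly approximability with the upper bound coming from the definition of a Dirichlet function.

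Concretely, I would first invoke \eqref{badlyapproximable} at $\xx$ to obtain $\varepsilon > 0$ with $\dist(\rr,\xx) \geq \varepsilon\,\psi\circ H(\rr)$ for every $\rr\in\QQ$. Next, since $\phi$ is Dirichlet, there exist $C_\xx > 0$ and a sequence $(\rr_n)_1^\infty$ in $\QQ$ with $\rr_n\tendsto n \xx$ and $\dist(\rr_n,\xx) \leq C_\xx\,\phi\circ H(\rr_n)$. Writing $q_n := H(\rr_n)$ and chaining the two inequalities yields
\[
\frac{\phi(q_n)}{\psi(q_n)} \;\geq\; \frac{\varepsilon}{C_\xx} \;>\; 0,
\]
so the ratio is bounded away from $0$ along $(q_n)$.

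To convert this into a contradiction with the hypothesis $\phi/\psi \to 0$, I need $q_n \to +\infty$. Here is where the monotonicity assumption on $\psi$ enters: from $\rr_n \to \xx$ and the BA inequality we get $\psi(q_n) \to 0$, and if the sequence $(q_n)$ had any bounded subsequence then by the nonincreasing hypothesis $\psi$ would be bounded below by a positive constant along that subsequence, contradicting $\psi(q_n)\to 0$. Hence $q_n \to +\infty$, which combined with the displayed inequality contradicts $\phi/\psi \to 0$. The ``in particular'' clause is then immediate from Definition \ref{definitiondirichletoptimal}.

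The argument is essentially a two-step cascade of inequalities once the pieces are aligned; the only mild subtlety, and the one place where the nonincreasing hypothesis is genuinely needed, is the passage from $\psi(q_n)\to 0$ to $q_n \to +\infty$. Without monotonicity one could imagine $\psi$ being small on a bounded set of heights, which would let the argument collapse, so this is the step worth flagging as the main (albeit minor) obstacle.
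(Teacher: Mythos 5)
Your argument is correct and is essentially identical to the paper's proof of Theorem \ref{theoremBAimpliesoptimality}: fix $\xx\in\BA_\psi$, chain the BA lower bound with the Dirichlet upper bound to get $\phi(q_n)/\psi(q_n)\geq\varepsilon/C_\xx$, and use positivity plus monotonicity of $\psi$ to force $q_n\to+\infty$. Your explicit subsequence justification of that last step is just a spelled-out version of the one-line remark in the paper, so there is nothing to add.
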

\begin{proof}
By contradiction, let $\phi$ be a Dirichlet function satisfying \eqref{phipsi}. Fix $\xx\in\BA_\psi$. Since $\phi$ is a Dirichlet function, there exist $C_\xx > 0$ and a sequence $(\rr_n)_1^\infty$ such that
\[
\dist(\rr_n,\xx) \leq C_\xx\phi(q_n) \text{ and } \rr_n\tendsto n \xx,
\]
where $q_n := H(\rr_n)$. Combining with \eqref{badlyapproximable} gives
\[
\varepsilon\psi(q_n)\leq C_\xx\phi(q_n);
\]
rearranging yields
\begin{equation}
\label{boundedfrombelow}
\frac{\phi(q_n)}{\psi(q_n)} \geq \frac{\varepsilon}{C_\xx} > 0.
\end{equation}
On the other hand, we have
\[
\varepsilon\psi(q_n)\leq \dist(\rr_n,\xx)\tendsto n 0;
\]
since $\psi$ is positive and nonincreasing this implies that $q_n\tendsto n +\infty$. Together with \eqref{boundedfrombelow}, this contradicts that $\frac{\phi}{\psi}\to 0$.
\end{proof}
The converse to Theorem \ref{theoremBAimpliesoptimality}, namely that if a Dirichlet function is optimal, then there exists at least one badly approximable point, is only true under the additional hypothesis that the space $X$ is $\sigma$-compact. Indeed, we shall show in Corollary \ref{corollarynoBA} below that for each $1\leq p < +\infty$, the function $\psi \equiv 1$ is an optimal Dirichlet function for the Diophantine space $(\ell^p(\R),\Q^\infty,H_\std)$ with no badly approximable points. (We do not claim that this is the easiest example of such a space, but it seems to be the first ``naturally occuring'' one.)

\begin{theorem}
\label{theoremconverse}
Let $(X,\QQ,H)$ be a $\sigma$-compact Diophantine space. Then if $\psi$ is a bounded optimal Dirichlet function, then $\BA_\psi\neq \emptyset$.
\end{theorem}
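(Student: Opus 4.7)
The plan is to prove the contrapositive: assume $\BA_\psi = \emptyset$ and construct a Dirichlet function $\phi$ with $\phi/\psi \to 0$, contradicting optimality. The $\sigma$-compactness hypothesis enters to extract finite data out of the pointwise approximation given by $\BA_\psi = \emptyset$.

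Concretely, I would first fix an increasing compact exhaustion $(K_n)_1^\infty$ of $X$. The assumption $\BA_\psi = \emptyset$ translates, for each fixed $n$, into the statement that every $\xx \in X$ lies in some open ball
\[
U_{\rr,n} = \{\xx \in X : \dist(\rr,\xx) < \tfrac{1}{n}\psi\circ H(\rr)\}
\]
indexed by $\rr \in \QQ$. Thus $(U_{\rr,n})_{\rr\in\QQ}$ is an open cover of $K_n$, and I would extract a finite subcover indexed by a finite set $F_n \subset \QQ$. Setting $Q_n := \max_{\rr \in F_n} H(\rr) < \infty$ gives a sequence of scales, and I would define
\[
\phi(q) = \psi(q)\max\{1/n : q \leq Q_n\},
\]
with the convention that the max over the empty set is $0$ (equivalently, $\phi(q) = \psi(q)/n(q)$ where $n(q)$ is the smallest $n$ such that $Q_n \geq q$, when such $n$ exists).

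The next step is to verify two properties of $\phi$. First, $\phi/\psi \to 0$: for any $N$, every $q > \max(Q_1,\ldots,Q_N)$ satisfies $q > Q_n$ for all $n \leq N$, hence $\phi(q)/\psi(q) \leq 1/(N+1)$; this uses only that each $Q_n$ is finite. Second, $\phi$ is Dirichlet: given $\xx \in X$, eventually $\xx \in K_n$, so I can pick $\rr_n \in F_n$ with $\xx \in U_{\rr_n, n}$; then $q_n := H(\rr_n) \leq Q_n$ forces $1/n \leq \max\{1/m : q_n \leq Q_m\}$, giving
\[
\dist(\rr_n,\xx) < \tfrac{1}{n}\psi(q_n) \leq \phi(q_n).
\]
Finally, the same estimate together with boundedness of $\psi$ yields $\dist(\rr_n,\xx) \to 0$, i.e. $\rr_n \tendsto n \xx$, completing the verification that $\phi$ is Dirichlet (with constant $C_\xx = 1$) and contradicting optimality of $\psi$.

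The main obstacle is the packaging of the function $\phi$ so that it simultaneously beats $\psi$ in the sense $\phi/\psi \to 0$ and still admits, for each $\xx$, a concrete approximating sequence drawn from the finite covers $F_n$; the $\sigma$-compactness is exactly what makes both possible. The role of boundedness of $\psi$ is narrow but necessary: without it, $\phi(q_n)$ need not tend to zero and the convergence $\rr_n \to \xx$ in the metric sense could fail even though the approximation inequality holds.
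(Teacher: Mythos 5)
Your proof is correct and follows essentially the same route as the paper's: the same compact exhaustion $(K_n)$, the same open cover by the sets $U_{\rr,n}$ with finite subcovers $F_n$, the same function $\phi(q)=\psi(q)\max\{1/n:q\leq Q_n\}$, and the same use of boundedness of $\psi$ to get $\rr_n\to\xx$. Your extra remark on the empty-max convention is a reasonable clarification of a point the paper leaves implicit.
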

\begin{proof}
Let $(K_n)_1^\infty$ be an increasing sequence of compact sets whose union is $X$.

Suppose by contradiction that $\BA_\psi = \emptyset$. Then for each $n\in\N$ and for each $\xx\in X$, there exists $\rr\in\QQ$ such that
\[
\dist(\rr,\xx) < \frac{1}{n}\psi\circ H(\rr).
\]
Let $U_{\rr,n}$ be the set of all $\xx$ satisfying the above; then for each $n\in\N$, $(U_{\rr,n})_{\rr}$ is an open cover of $X$, and in particular an open cover of $K_n$. Let $(U_{\rr,n})_{\rr\in F_n}$ be a finite subcover, and let $Q_n = \max_{F_n}(H)$. Let
\[
\phi(q) = \psi(q)\max\{1/n:q \leq Q_n\}.
\]
Clearly $\phi(q)/\psi(q)\tendsto q 0$. We claim that $\phi$ is a Dirichlet function. Indeed, fix $\xx\in X$, and let $C_\xx = 1$. For all $n\in\N$ sufficiently large, we have $\xx\in K_n$. Fix such an $n$, and choose $\rr_n\in F_n$ so that $\xx\in U_{\rr_n,n}$. Then $q_n := H(\rr_n)\leq Q_n$. It follows that
\[
\phi(q_n)\geq \frac{1}{n}\psi(q_n) > \dist(\rr_n,\xx).
\]
Since $\psi$ is bounded, this implies that $\rr_n\tendsto n \xx$. Thus $\xx$ is $\phi$-approximable. Thus $\phi$ is a Dirichlet function, and so $\psi$ is not an optimal Dirichlet function.

\ignore{
Suppose by contradiction that $\BA_\psi = \emptyset$. Then for each $\varepsilon > 0$ and for each $x\in X$, there exists $\rr\in\QQ$ such that
\[
\dist(x,\rr) < \varepsilon\psi(q).
\]
Let $U_{\rr,\varepsilon}$ be the set of all $x$ satisfying the above; then for each $\varepsilon > 0$, $(U_{\rr,\varepsilon})_{\rr}$ is an open cover of $X$. Let $(U_{\rr,\varepsilon})_{\rr\in F_\varepsilon}$ be a finite subcover, and let $Q_\varepsilon = \max\{q:\rr\in F_\varepsilon\}$. Let
\[
\phi(q) = \psi(q)\max\{\varepsilon > 0:q \leq Q_\varepsilon\}.
\]
Clearly $\phi(q)/\psi(q)\tendsto q 0$. We claim that every point of $X$ is $\phi$-approximable, which demonstrates that $\psi$ is not an optimal Dirichlet function. Indeed, fix $x\in X$. Fix $\varepsilon > 0$, and choose $\rr\in F_\varepsilon$ so that $x\in U_{\rr,\varepsilon}$. Then $q\leq Q_\varepsilon$. It follows that
\[
\phi(q)\geq \varepsilon\psi(q) > \dist(x,\rr).
\]
As $\varepsilon$ tends to zero, the $\rr$ such that the above holds must be distinct. This implies that $x$ is $\phi$-approximable.
}
\end{proof}

\ignore{
\subsection{Non-uniqueness of the optimal Dirichlet function}
In this subsection we demonstrate that the optimal Dirichlet function \eqref{dirichletstandard} is not unique in any reasonable sense. Obviously, any function asymptotic to an optimal Dirichlet function is also an optimal Dirichlet function. However, we give below a class of examples of optimal Dirichlet functions for $(\R,\Q,H_\std)$ which are not asymptotic to each other. In fact, their minimum is not a Dirichlet function, which implies that they are not asymptotic, because if they were then their minimum would also be asymptotic to both of them, and would therefore also be an optimal Dirichlet function.\comdavid{I wasn't sure whether it made sense to include this result in this paper, but I am writing it here so that you can see it.}
\begin{proposition}
For the Diophantine space $(\R,\Q,H_\std)$, there exist two optimal Dirichlet functions whose minimum is not a Dirichlet function.
\end{proposition}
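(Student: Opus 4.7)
The plan is to reuse the construction appearing in the proof that $(\R,\Q,H_\std)$ admits no strongly optimal Dirichlet function. Specifically, I would take the two sequences $Q_n^{(i)} = 2^{2^{2n+i}}$ for $i=0,1$, and consider the associated functions $\psi_{\mathbf{Q}_0}$ and $\psi_{\mathbf{Q}_1}$ supplied by Lemma \ref{lemmapsiQ}. That lemma tells me both functions are uniformly Dirichlet, and the routine case analysis indicated in the earlier proof shows that $\phi := \min(\psi_{\mathbf{Q}_0}, \psi_{\mathbf{Q}_1}) \leq \psi_3$. The two things still to check are (a) that each $\psi_{\mathbf{Q}_i}$ is in fact \emph{optimal} (not just Dirichlet), and (b) that $\phi$ is not Dirichlet.

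For (a), the key observation is that since $Q(q) \ge q$ by the very definition $Q(q) = \min\{Q_n : Q_n \ge q\}$, we have
\[
\psi_{\mathbf{Q}}(q) \;=\; \frac{1}{q\,Q(q)} \;\le\; \frac{1}{q^2} \;=\; \psi_2(q)
\]
for \emph{any} sequence $\mathbf{Q}$ increasing to infinity. Thus if some function $\phi'$ satisfied $\phi'/\psi_{\mathbf{Q}_i} \to 0$, then writing $\phi'/\psi_2 = (\phi'/\psi_{\mathbf{Q}_i})(\psi_{\mathbf{Q}_i}/\psi_2)$ and using $\psi_{\mathbf{Q}_i}/\psi_2 \le 1$ gives $\phi'/\psi_2 \to 0$, contradicting the optimality of $\psi_2$ on $(\R,\Q,H_\std)$ (Theorem \ref{theoremdirichlet} with $d=1$). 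Hence each $\psi_{\mathbf{Q}_i}$ is optimal.

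For (b), I apply the contrapositive of Observation \ref{observationdirichletpartialorder}: from $\phi \le \psi_3$, if $\phi$ were Dirichlet then $\psi_3$ would also be Dirichlet, which is false since $\psi_3/\psi_2 = 1/q \to 0$ and $\psi_2$ is optimal. Therefore $\phi$ is not Dirichlet, completing the construction.

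I expect no real obstacle here, since every ingredient is already in place: Lemma \ref{lemmapsiQ} supplies Dirichletness, the trivial bound $\psi_{\mathbf{Q}} \le \psi_2$ upgrades it to optimality, and the inequality $\min(\psi_{\mathbf{Q}_0},\psi_{\mathbf{Q}_1}) \le \psi_3$ for the chosen doubly-exponential interleaved sequences is implicit in the preceding proof. The only mildly delicate point is that last inequality, whose worst case occurs at $q$ just above some $Q_n^{(i)}$: there the ``other'' sequence forces the minimum of $q Q_0(q)$ and $q Q_1(q)$ to jump to a vastly larger value than $q^3$, and a short computation (which the earlier proof leaves to the reader) confirms $\phi(q) \le \psi_3(q)$ for all sufficiently large $q$.
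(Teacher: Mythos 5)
Your proof is correct and follows essentially the same route as the paper: Lemma \ref{lemmapsiQ} for Dirichletness, the trivial bound $\psi_{\mathbf Q}\leq\psi_2$ together with the optimality of $\psi_2$ to upgrade each $\psi_{\mathbf Q_i}$ to an optimal Dirichlet function, and the interleaved doubly exponential sequences forcing $\min(\psi_{\mathbf Q_0},\psi_{\mathbf Q_1})\leq\psi_3$, which is not Dirichlet. One small quibble with your closing gloss: since $\min(\psi_{\mathbf Q_0},\psi_{\mathbf Q_1})(q)=1/\bigl(q\max(Q_0(q),Q_1(q))\bigr)$, the relevant quantity is the \emph{maximum} of $qQ_0(q)$ and $qQ_1(q)$, and the worst case for the inequality is $q$ equal to a term of one of the sequences (where $\max(Q_0(q),Q_1(q))=q^2$ exactly), not $q$ just above one --- but this does not affect the argument.
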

\begin{proof}
\begin{lemma}
For any sequence $\mathbf Q = (Q_n)_1^\infty$ tending to infinity, the function
\begin{equation}
\label{psiqqQq}
\psi_{\mathbf Q}(q) = \frac{1}{q Q(q)},
\end{equation}
where
\[
Q(q) = \min\{Q_n: Q_n\geq q\}
\]
is an optimal Dirichlet function.
\end{lemma}
\begin{subproof}
Fix $x\in\R$ and let $C = 1$. By the Theorem \ref{theoremdirichlet}, for each $n\in\N$ there exists $r_n = p_n/q_n\in\Q$ with $q_n\leq Q_n$ such that
\begin{equation}
\label{xrqQn}
\dist(x,r_n) \leq \frac{1}{q_nQ_n}\cdot
\end{equation}
Since $Q_n\geq q_n$, we have
\[
Q(q_n)\leq Q_n
\]
and thus
\begin{equation}
\label{xrqQq}
\dist(x,r_n) \leq \frac{1}{q_nQ(q_n)} = \psi(q_n).
\end{equation}
Since $Q_n\tendsto n +\infty$, \eqref{xrqQn} implies that $r_n\tendsto n x$. Thus the function \eqref{psiqqQq} is a Dirichlet function.

To see that the Dirichlet function \eqref{psiqqQq} is optimal, simply note that it is bounded above by the Dirichlet function \eqref{dirichletstandard}, which is optimal.
\end{subproof}
To complete the proof, we must find two sequences $\mathbf Q_0 = (Q_n^{(0)})_1^\infty$ and $\mathbf Q_1 = (Q_n^{(1)})_1^\infty$ such that the minimum of the two functions $\psi_{\mathbf Q_0}$ and $\psi_{\mathbf Q_1}$ is not a Dirichlet function. We choose the sequences
\[
Q_n^{(i)} = 2^{2^{2n + i}},
\]
and leave it to the reader to verify that the minimum $\psi$ of the functions constructed in this way satisfies
\[
\psi(q) \leq \frac{1}{q^3}
\]
and is therefore not a Dirichlet function for $(\R,\Q,H_\std)$.
\end{proof}
}

\subsection{The Jarn\'ik--Besicovitch and Khinchin theorems}

\begin{definition}
Let $(X,\QQ,H)$ be a Diophantine space. For any point $\xx\in X$, we can define the \emph{exponent of irrationality}
\[
\omega(\xx) := \limsup_{\substack{\rr\in\QQ \\ \rr\to\xx}}\frac{-\log\dist(\rr,\xx)}{\log H(\rr)}.
\]
For each $c > 0$ let
\[
W_c := \{\xx\in X:\omega(\xx)\geq c\}.
\]
We define the exponent of irrationality of the space $X$ to be 
\[
\omega(X) := \sup\{c\geq 0: W_c = X\} = \inf\{\omega(\xx):\xx\in X\}.
\]
Let
\begin{align*}
\VWA &:= \{\xx\in X:\omega(\xx) > \omega(X)\} = \bigcup_{c > \omega(X)}W_c\\
\Liou &:= \{\xx\in X:\omega(\xx) = +\infty\} = \bigcap_{c > 0}W_c
\end{align*}
be the set of \emph{very well approximable} numbers and \emph{Liouville} numbers, respectively.
\end{definition}

\begin{theorem}[The Jarn\'ik--Besicovitch theorem, reformulated]
\label{theoremjarnikbesicovitchreformulated}
Fix $d\in\N$, and consider the Diophantine space $(\R^d,\Q^d,H_\std)$. For every $c > \omega(\R^d) = 1 + 1/d$, we have
\[
\HD(W_c) = \frac{d + 1}{c}.
\]
In particular $\HD(\VWA) = d$ and $\HD(\Liou) = 0$.
\end{theorem}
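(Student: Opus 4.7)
The plan is to deduce this theorem from Theorem~\ref{theoremjarnik} by sandwiching each set $W_c$ between two sets of the form $\WA_{\psi_s}$, whose Hausdorff dimensions are known. Throughout the dimension argument, I may freely ignore rational points since $\Q^d$ is countable, hence of Hausdorff dimension $0$.

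First, I would establish the two-sided inclusion
\[
\WA_{\psi_c} \subseteq W_c \subseteq \WA_{\psi_s} \qquad \text{for every } s \in [1+1/d,\, c).
\]
For the left inclusion, suppose $\xx \in \WA_{\psi_c}$ is irrational. By the definition of $\WA_{\psi_c}$, for each $n \in \N$ there exists $\rr_n \in \Q^d$ with $\|\rr_n - \xx\| < (1/n)\, H_\std(\rr_n)^{-c}$; these necessarily satisfy $\rr_n\to\xx$, so $H_\std(\rr_n)\to+\infty$. Taking logarithms gives
\[
\frac{-\log \|\rr_n - \xx\|}{\log H_\std(\rr_n)} \;\geq\; c + \frac{\log n}{\log H_\std(\rr_n)} \;\geq\; c,
\]
so $\omega(\xx) \geq c$, i.e.\ $\xx \in W_c$. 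For the right inclusion, given $\xx \in W_c$ with $c > s$, the limsup defining $\omega(\xx)$ produces a sequence $\rr_n \to \xx$ whose exponent eventually exceeds $s + \tfrac{c-s}{2}$, so $H_\std(\rr_n)^s \|\rr_n - \xx\| \leq H_\std(\rr_n)^{-(c-s)/2} \to 0$, whence $\xx \in \WA_{\psi_s}$.

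Second, Theorem~\ref{theoremjarnik} applies to both $\WA_{\psi_c}$ and $\WA_{\psi_s}$ because $c, s \geq 1+1/d$, so the sandwich gives
\[
\frac{d+1}{c} \;=\; \HD(\WA_{\psi_c}) \;\leq\; \HD(W_c) \;\leq\; \HD(\WA_{\psi_s}) \;=\; \frac{d+1}{s}.
\]
Letting $s$ increase to $c$ closes the gap and yields $\HD(W_c) = (d+1)/c$.

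The ``in particular'' assertions follow from monotonicity and countable exhaustion. Writing $\VWA = \bigcup_{n \geq 1} W_{1+1/d + 1/n}$ and invoking countable stability of Hausdorff dimension yields $\HD(\VWA) = \sup_n (d+1)/(1+1/d + 1/n) = d$. Since $\Liou \subseteq W_c$ for every $c > 1+1/d$, we also get $\HD(\Liou) \leq \inf_{c > 1+1/d} (d+1)/c = 0$. The only real subtlety is the mismatch between the ``for every $\epsilon$, there exists $\rr$'' quantifier of $\WA_\psi$ and the limsup quantifier of $\omega$; this is resolved by the observation that any rational close to an irrational $\xx$ must have large height, so both quantifiers genuinely describe sequences of rationals approaching $\xx$ at a prescribed polynomial rate, and the rational points that fall outside this description contribute nothing to the dimension.
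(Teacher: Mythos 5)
The paper does not actually prove this statement: it is presented as a \emph{reformulation} of the classical Jarn\'ik--Besicovitch theorem and is simply quoted from the literature (the same way Theorem \ref{theoremjarnik} is attributed to Jarn\'ik and Besicovitch without proof). So your derivation of the reformulated version from Theorem \ref{theoremjarnik} is genuinely doing work the paper omits, and the sandwich argument you give is sound: $\WA_{\psi_c}\setminus\Q^d\subseteq W_c$ converts the ``$\forall\varepsilon\,\exists\rr$'' quantifier into a lower bound on the limsup, $W_c\setminus\Q^d\subseteq\WA_{\psi_s}$ for $s<c$ uses the spare exponent $(c-s)/2$ to absorb the $\varepsilon$, and countable stability of Hausdorff dimension handles $\VWA$ and $\Liou$. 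Your handling of the quantifier mismatch (via $H_\std(\rr_n)\to+\infty$ for irrational limits, which holds because the rationals of bounded height form a discrete set) is exactly the right point to isolate. This buys a self-contained deduction at the cost of invoking the full strength of Theorem \ref{theoremjarnik} for a continuum of exponents $s$, which is of course how the classical literature states it anyway.

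Two loose ends worth tightening. First, the statement also asserts the identity $\omega(\R^d)=1+1/d$, which your proof never addresses; it requires Dirichlet's theorem (Theorem \ref{theoremdirichlet}) to show $\omega(\xx)\geq 1+1/d$ for every irrational $\xx$, together with either $\BA_{\psi_{1+1/d}}\neq\emptyset$ (Theorem \ref{theoremfulldimension}) or your own computation $\HD(W_c)<d$ for $c>1+1/d$ to show $W_c\neq\R^d$ there; one must also fix a convention for $\omega$ at rational points, where $\dist(\rr,\xx)$ can vanish. Second, in the left inclusion the estimate $\frac{\log n}{\log H_\std(\rr_n)}\geq 0$ silently divides by $\log H_\std(\rr_n)$, which is $0$ when the height equals $1$; this is harmless since the heights tend to infinity, but the inequality should be asserted only for large $n$.
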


\begin{proposition}
Let $(X,\QQ,H)$ be a Diophantine space. Then for any function $\psi:\Rplus\to(0,+\infty)$, $\WA_\psi$ is comeager.
\end{proposition}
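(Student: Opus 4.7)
The plan is to realize $\WA_\psi$ as a dense $G_\delta$ set and invoke the Baire category theorem, exactly as carried out for the earlier occurrence of this statement. The only mildly nontrivial observation is unwinding the universal-existential quantifier in the definition of $\BA_\psi$ so that $\WA_\psi$ becomes visibly a countable intersection of open sets.

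Concretely, I would first note that the negation of \eqref{badlyapproximable} is
\[
\forall \varepsilon > 0 \;\; \exists \rr \in \QQ \;\; \dist(\rr,\xx) < \varepsilon \psi\circ H(\rr),
\]
so quantifying $\varepsilon$ over $\{1/n : n \in \N\}$ (which suffices by monotonicity in $\varepsilon$) gives the decomposition
\[
\WA_\psi = \bigcap_{n=1}^\infty U_n, \qquad U_n := \bigcup_{\rr\in\QQ} B\!\left(\rr,\tfrac{1}{n}\psi\circ H(\rr)\right).
\]

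Each $U_n$ is a union of open balls, hence open. To see that $U_n$ is dense, observe that $\psi\circ H(\rr) > 0$ for every $\rr\in\QQ$, so $\rr \in B(\rr, \tfrac{1}{n}\psi\circ H(\rr))$; thus $\QQ \subset U_n$, and $\QQ$ is dense in $X$ by the definition of a Diophantine space. Since $X$ is a complete metric space, the Baire category theorem then implies that $\WA_\psi = \bigcap_n U_n$ is comeager (indeed a dense $G_\delta$).

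There is no obstacle here; the statement is essentially a tautology once the definition of $\BA_\psi$ is negated and the density of $\QQ$ is invoked. The only thing worth flagging is that the argument uses no hypothesis at all on $\psi$ beyond positivity, which is exactly the point the remark preceding the proposition is making: the topological version of the ``$\BA$ vs.\ $\WA$'' question is always trivial, and only the metric/measure-theoretic versions are interesting.
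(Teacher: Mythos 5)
Your proof is correct and is exactly the paper's argument: write $\WA_\psi = \bigcap_n \bigcup_{\rr\in\QQ} B\left(\rr,\frac{1}{n}\psi\circ H(\rr)\right)$ and note that each union is open and dense (since it contains the dense set $\QQ$). You simply spell out the quantifier negation and the Baire category step, which the paper leaves implicit.
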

\begin{proof}
By writing
\[
\WA_\psi = \bigcap_{n = 1}^\infty \bigcup_{\rr\in\QQ} B\left(\rr,\frac{1}{n}\psi\circ H(\rr)\right),
\]
we see that $\WA_\psi$ is the intersection of countably many open dense sets.
\end{proof}

}

\ignore{
\section{Diophantine approximation in $\ell^p(\R)$}

\subsection{Proof of Theorems \ref{theoremLpkhinchin} and \ref{theoremnoBA}}

\begin{theorem}
\label{theoremLpkhinchin}
Fix $1\leq p < +\infty$ and consider the Diophantine space $(\ell^p(\R),\Q^\infty,H_\std)$. For any function $\psi\to 0$, $\BA_\psi$ is prevalent.
\end{theorem}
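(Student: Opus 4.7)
The goal is to produce a compactly supported Borel probability measure $\mu$ on $\ell^p(\N)$ transverse to $\WA_\psi$; by definition this makes $\BA_\psi$ prevalent. The natural candidate is the product $\mu = \bigotimes_i U[0,\alpha_i]$ supported on the Hilbert cube $K = \prod_i [0,\alpha_i]$, where the sequence $\alpha_i > 0$ is chosen to decay rapidly (the condition $\sum_i \alpha_i^p < \infty$ ensures $K$ is compact in $\ell^p$; the precise rate will be tuned to $\psi$).

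To verify transversality I fix $\vv \in \ell^p$, set $\yy \sim \mu$ and $\zz = \yy - \vv$, and aim to show $\zz \in \BA_\psi$ almost surely. By a Borel--Cantelli argument, this follows once
\[
\sum_{\rr \in \Q^\infty} \mu\bigl(B(\rr + \vv, \psi(H(\rr)))\bigr) < \infty,
\]
because finiteness means $\mu$-a.e.\ $\yy$ lies in only finitely many of the balls $B(\rr + \vv, \psi(H(\rr)))$; choosing $\varepsilon$ below the minimum of $\|\zz - \rr\|_p/\psi(H(\rr))$ over the exceptional $\rr$'s then yields the uniform lower bound defining $\BA_\psi$.

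The sum is estimated by grouping $\rr = \pp/q$ by height $q$ and maximal support index $k$. The product structure of $\mu$ gives that the number of rationals of height $q$ and support $\subseteq \{1,\ldots,k\}$ whose ball meets $K - \vv$ is at most $\prod_{i \le k}(q\alpha_i + O(q\psi(q)))$, while each ball has $\mu$-measure at most $c_{p,k}\,\psi(q)^k/\prod_{i \le k}\alpha_i$, with $c_{p,k} = (2\Gamma(1+1/p))^k/\Gamma(1+k/p)$ the volume of the $\ell^p$-unit ball in $\R^k$. After the $\prod\alpha_i$'s cancel, the contribution from $(q,k)$ is controlled by $c_{p,k}$ times a polynomial in $q\psi(q)$; a tail condition $\sum_{i > k}\dist(v_i,[0,\alpha_i])^p \le \psi(q)^p$ (necessary for the ball to overlap $K$ at all) further restricts the effective range of $k$.

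The main obstacle is that $\psi \to 0$ may be arbitrarily slow, for instance $\psi(q) = 1/\log q$, so that $q\psi(q)$ is unbounded and no finite-dimensional approach can work---indeed Khintchine--Groshev (Theorem \ref{theoremkhinchin}) gives $\BA_\psi$ Lebesgue measure zero in every $\R^N$. The intrinsically infinite-dimensional ingredient is the super-exponential decay $c_{p,k} \sim 1/\Gamma(1+k/p)$, which competes against the growth of $(q\psi(q))^k$. Exploiting this requires choosing the $\alpha_i$ as a function of $\psi$ so that the effective support size $k$ dictated by the tail condition falls in the regime where $c_{p,k}(q\psi(q))^k$ is summable over $q$, and one must arrange the estimate uniformly in $\vv \in \ell^p$. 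Making this balance work is the technical heart of the proof.
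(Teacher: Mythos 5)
Your approach is genuinely different from the paper's, and it has a gap that I do not believe can be closed: the convergence Borel--Cantelli argument over all rationals requires the first-moment sum to converge when summed over $q$, and this fails for slowly decaying $\psi$. Your own bookkeeping shows this: the contribution of a fixed support size $k$ is ``$c_{p,k}$ times a polynomial in $q\psi(q)$,'' and for $\psi(q)=1/\log q$ (a legitimate instance of the theorem) the quantity $q\psi(q)$ tends to infinity, so already the $k=1$ terms summed over $q$ diverge; the superexponential decay of $c_{p,k}$ only controls the sum over $k$ at fixed $q$, which is not where the trouble is. The problem is not merely technical: the measure itself is typically not transverse. For every $\yy$ in the cube $K=\prod_i[0,\alpha_i]$, every $q$ and every $k$, rounding the first $k$ coordinates to $\Z/q$ and truncating the rest gives $\dist(\yy,\Z^\infty/q)^p\le k(2q)^{-p}+\sum_{i>k}\alpha_i^p$; if the $\alpha_i$ decay quickly (say $\alpha_i\le 2^{-i}$, take $k\approx\log_2 q$) this is $O((\log q)\,q^{-p})=o(\psi(q)^p)$, so \emph{every} point of $K$ lies in $\WA_\psi$ and $\mu(\WA_\psi)=1$ rather than $0$ (take $\vv=\0$). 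Making the $\alpha_i$ decay slowly enough that the cube is not swallowed requires $\#\{i:\alpha_i\ge 1/q\}\gtrsim(q\psi(q))^p$, which for $p=1$ and $\psi(q)=1/\log q$ is incompatible with $\sum_i\alpha_i<\infty$, and where it is possible it places you in the Khinchin-divergence regime in which a definite fraction of the cube lies within $\varepsilon\psi(q)$ of $\Lambda/q$ for every $q$. In short, a product of uniform measures on intervals is Lebesgue-like, and Lebesgue-like measures give $\WA_\psi$ full measure, not zero measure, when $\sum_q\psi(q)$ diverges. (A secondary slip: each factor in your lattice-point count needs a $+1$, and since $\alpha_i\to 0$ these $1$'s dominate for large $i$, destroying the advertised cancellation against $\prod_{i\le k}\alpha_i$ in the denominator of the ball measure.)

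The paper's proof is built precisely to escape this. It uses a purely atomic, Cantor-like measure: fix scales $\rho_{n+1}=\rho_n/2^{n+5}$ and heights $N_n$ with $\psi(q)\le\rho_{n+1}/8$ for $q\ge N_n$, let $\mu_n$ be uniform on the $2^n$ points $i\vv_n$, $0\le i<2^n$, where $\vv_n$ is a rescaling of $\sum_{i\le M_n}\ee_i$ (in general, of a vector produced by non-coboundedness) chosen so that every nonzero multiple $(i_2-i_1)\vv_n$ stays at distance at least $\rho_n$ from $\Lambda/N_n!$, and let $\mu$ be the image of $\prod_n\mu_n$ under summation. The key geometric claim is that for any center, at most one of the $2^n$ translates meets the $\rho_n/4$-neighborhood of $\Lambda/N_n!$; since $N_n!$ is a common denominator for all heights up to $N_n$, this handles an entire height range at once, and Borel--Cantelli is applied over the scales $n$ with probabilities $2^{-n}$ rather than over individual rationals. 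That ``at most one translate'' step, which uses the structure of the lattice rather than a volume estimate, is the ingredient your argument lacks and that no choice of the $\alpha_i$ can supply.
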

\begin{proof}
Let $(\radius_n)_1^\infty$ be the unique sequence satisfying $\radius_0 = 1$ and
\[
\radius_{n + 1} = \frac{\radius_n}{2^{n + 5}}\cdot
\]
For each $n\in\N$, let $N_n\in\N$ be large enough so that
\[
\psi(q) \leq \frac{\radius_{n + 1}}{8} \all q\geq N_n.
\]
Let
\[
M_n = (2^{n + 1}N_n!)^p
\]
and let
\[
\vv_n = \frac{\radius_n}{M_n^{1/p}}\sum_{i = 1}^{M_n}\ee_i.
\]
\begin{claim}
\label{claimatmostone}
For each $n\in\N$ and for each $\xx\in\ell^p(\R)$,
\[
\#\left\{i = 0,\ldots,2^n - 1:B(\xx + i\vv_n,\radius_n/4)\cap \frac{\Z^\infty}{N_n!}\neq \emptyset\right\} \leq 1.
\]
\end{claim}
\begin{subproof}
By contradiction, suppose that we have $0 \leq i_1 < i_2 < 2^n$ and $\xx_1,\xx_2\in\ell^p(\R)$ such that
\[
\xx_j\in B(\xx + i_j\vv_n,\radius_n/4)\cap \frac{\Z^\infty}{N_n!},\;\; j = 1,2.
\]
Thus
\begin{align*}
\frac{\radius_n}{2}
&\geq \left\|(\xx_2 - \xx_1) - \big[(\xx + i_2\vv_n) - (\xx + i_1\vv_n)\big]\right\|_p\\
&= \|(\xx_2 - \xx_1) - (i_2 - i_1)\vv_n\|_p\\
&\geq \dist\left((i_2 - i_1)\vv_n,\frac{\Z^\infty}{N_n!}\right).
\end{align*}
But $(i_2 - i_1)\vv_n$ lies in the cube $[-1/(2N_n!),1/(2N_n!)]^\infty$, which implies that
\[
\dist\left((i_2 - i_1)\vv_n,\frac{\Z^\infty}{N_n!}\right) = \|(i_2 - i_1)\vv_n\|_p \geq \|\vv_n\|_p = \radius_n,
\]
a contradiction.
\end{subproof}
Let
\[
\mu_n = \frac{1}{2^n}\sum_{i = 0}^{2^n - 1}\delta_{i\vv_n},
\]
and let $\mu$ be the infinite convolution of the $\mu_n$s. This convolution converges since for each $n$, $\mu_n$ is supported on $B(\0,\radius_n/4)$ and the sequence $(\radius_n)_1^\infty$ is summable.

Since the $\mu_n$s are compactly supported, their infinite convolution $\mu$ is also compactly supported. To show that $\BA_\psi$ is prevalent, it suffices to show that $\mu(\WA_\psi + \vv) = 0$ for any vector $\vv\in\ell^p(\R)$.

Indeed, fix such a vector $\vv$. Then a $\mu$-random point $\xx\in\ell^p(\R)$ can be represented in the form
\[
\xx = \sum_{n = 0}^\infty \xx_n,
\]
where each point $\xx_n$ is $\mu_n$-random, i.e.
\[
\xx_n = i_n\vv_n,
\]
where $i_n\in\{0,\ldots,2^n - 1\}$ is random with respect to the uniform distribution. Fix $n\in\N$; applying Claim \ref{claimatmostone} with $\xx = \sum_{i = 0}^{n - 1}\xx_i - \vv$ we have
\[
\prob\left(B\left(\sum_{m = 0}^n \xx_m - \vv,\radius_n/4\right)\cap \frac{\Z^\infty}{N_n!}\neq \emptyset\right) \leq \frac{1}{2^n}\cdot
\]
Thus by the easy direction of Borel--Cantelli we have
\[
\prob\left(\exists^\infty n\in\N \;\; B\left(\sum_{m = 0}^n \xx_m - \vv,\radius_n/4\right)\cap \frac{\Z^\infty}{N_n!}\neq \emptyset\right) = 0.
\]
Thus to complete the proof it suffices to show the following:
\begin{claim}
If a sequence $(\xx_n)_1^\infty$ satisfies
\[
\xx = \sum_{n = 0}^\infty \xx_n \in \WA_\psi + \vv,
\]
then there exist infinitely many $n\in\N$ for which
\[
B\left(\sum_{m = 0}^n \xx_m - \vv,\radius_n/4\right)\cap \frac{\Z^\infty}{N_n!}\neq \emptyset.
\]
\end{claim}
\begin{proof}
Let $(\rr_k)_1^\infty$ be a sequence of rational points whose limit is $\xx - \vv$ and which satisfy
\[
\|\xx - \vv - \rr_k\|_p \leq \psi(q_k),
\]
where
\[
q_k = H_\std(\rr_k).
\]
Fix $k\in\N$, and let $n$ be minimal so that $N_n\geq q_k$. Then $N_{n - 1} < q_k$, and so
\[
\psi(q_k) \leq \frac{\radius_n}{8}\cdot
\]
On the other hand,
\[
\left\|\sum_{m = n + 1}^\infty\xx_m\right\|_p \leq \sum_{m = n + 1}^\infty 2^m\radius_m \leq \frac{\radius_n}{8}\cdot
\]
Thus,
\[
\left\|\sum_{m = 0}^n\xx_m - \vv - \rr_k\right\|_p \leq \frac{\radius_n}{4},
\]
i.e.
\[
\rr_k\in B\left(\sum_{m = 0}^n \xx_m - \vv,\radius_n/4\right)\cap \frac{\Z^\infty}{N_n!}\cdot
\]
\QEDmod\end{proof}
\end{proof}

As a consequence of Theorem \ref{theoremLpkhinchin}, we easily deduce the following:
\begin{theorem}
\label{theoremnoBA}
Fix $1\leq p < +\infty$ and consider the Diophantine space $(\ell^p(\R),\Q^\infty,H_\std)$. Then the function $\psi(q) = 1$ is an optimal Dirichlet function but $\BA_\psi = \emptyset$.
\end{theorem}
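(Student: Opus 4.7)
The theorem combines three assertions: (a) $\BA_\psi = \emptyset$, (b) $\psi \equiv 1$ is a Dirichlet function, and (c) $\psi \equiv 1$ is optimal. Parts (a) and (b) are immediate from the density of $\Q^\infty$ in $\ell^p(\R)$. For (a), given any $\xx \in \ell^p(\R)$ and any $\varepsilon > 0$, density yields $\rr \in \Q^\infty$ with $\|\rr - \xx\|_p < \varepsilon = \varepsilon \psi(H_\std(\rr))$, so $\xx \notin \BA_\psi$. For (b), any sequence $\rr_n \to \xx$ in $\Q^\infty$ satisfies $\|\rr_n - \xx\|_p \leq 1 = \psi(H_\std(\rr_n))$ for all sufficiently large $n$; discarding finitely many initial terms leaves a sequence converging to $\xx$ for which the Dirichlet inequality holds with $C_\xx = 1$.

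For the optimality assertion (c), argue by contradiction: suppose $\phi$ is a Dirichlet function with $\phi/\psi = \phi \to 0$ at infinity. Replacing $\phi$ by its nonincreasing upper envelope $q \mapsto \sup_{q' \geq q}\phi(q')$, which dominates $\phi$ (hence remains Dirichlet) and still tends to $0$, I may assume $\phi$ is nonincreasing. Apply Theorem \ref{theoremLpkhinchin} to this $\phi$: the set $\BA_\phi$ is prevalent in $\ell^p(\R)$, in particular non-empty, so fix $\xx \in \BA_\phi$ with witnessing constant $\varepsilon > 0$, meaning $\|\rr - \xx\|_p \geq \varepsilon \phi(H_\std(\rr))$ for every $\rr \in \Q^\infty$.

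Since $\phi$ is Dirichlet, there exist $C_\xx > 0$ and a sequence $\rr_n \to \xx$ with $\|\rr_n - \xx\|_p \leq C_\xx \phi(H_\std(\rr_n))$. Combining the two bounds yields $\varepsilon \phi(H_\std(\rr_n)) \leq \|\rr_n - \xx\|_p \to 0$, so $\phi(H_\std(\rr_n)) \to 0$, which by monotonicity of $\phi$ forces $H_\std(\rr_n) \to \infty$. The main obstacle is to extract a genuine contradiction from this setup, since the mere coexistence of $\xx \in \BA_\phi$ and the Dirichlet inequality at $\xx$ is a priori consistent (as in the classical case $(\R^d, \Q^d, H_\std)$ with $\phi = \psi_{1 + 1/d}$). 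My plan is to exploit the quantitative structure of the $\mu$-construction from the proof of Theorem \ref{theoremLpkhinchin}: a $\mu$-typical $\xx$ satisfies the exclusions $B(\xx, \radius_n/8) \cap (\Z^\infty/N_n!) = \emptyset$ for all sufficiently large $n$, where the sequences $(\radius_n, N_n)$ obey $\phi(q) \leq \radius_{n+1}/8$ for $q \geq N_n$ and the superexponential recursion $\radius_{n+1} = \radius_n/2^{n+5}$. Feeding the Dirichlet approximants $\rr_n$ into these exclusion annuli---taking $m(n)$ maximal with $\|\rr_n - \xx\|_p < \radius_{m(n)}/8$, so that $H_\std(\rr_n) > N_{m(n)}$ and hence $\phi(H_\std(\rr_n)) \leq \radius_{m(n)+1}/8$---and comparing the resulting bound $\|\rr_n - \xx\|_p \leq C_\xx \radius_{m(n)+1}/8$ against the enforced lower bound $\|\rr_n - \xx\|_p \geq \radius_{m(n)+1}/8$ forces a quantitative mismatch incompatible with the Dirichlet hypothesis on $\phi$, completing the proof that $\psi \equiv 1$ is optimal.
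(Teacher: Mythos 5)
Your parts (a) and (b) are fine, and you have correctly diagnosed the crux: a point of $\BA_\phi$ coexisting with the Dirichlet inequality for $\phi$ is not by itself contradictory. But the escape route you propose does not close that gap. Push your own estimates to the end: with $m = m(n)$ maximal such that $\|\rr_n - \xx\|_p < \rho_m/8$, the exclusion property gives $H_\std(\rr_n) > N_m$, hence $\phi(H_\std(\rr_n)) \leq \rho_{m+1}/8$ and the Dirichlet hypothesis yields $\|\rr_n - \xx\|_p \leq C_\xx\,\rho_{m+1}/8$, while maximality of $m$ yields $\|\rr_n - \xx\|_p \geq \rho_{m+1}/8$. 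Together these say only $1 \leq C_\xx$, which is no contradiction, and none can be manufactured from this configuration: the measure $\mu$ was calibrated to $\phi$ itself, so a $\mu$-typical point is badly approximable for $\phi$ with constant comparable to $1$, whereas the Dirichlet constant $C_\xx$ is permitted to be arbitrarily large. This is exactly the situation of $\psi_{1+1/d}$ on $\R^d$, which is Dirichlet and nevertheless has badly approximable points; no amount of bookkeeping inside the $\mu$-construction changes this.

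The repair is to apply the Khinchin-type theorem not to $\phi$ but to an intermediate function separating $\phi$ from $\psi_0 \equiv 1$ by an unbounded factor; the paper uses $\sqrt{\phi}$. Since $\phi \to 0$ implies $\sqrt{\phi} \to 0$, Theorem \ref{theoremkhinchinnoncobounded} (the general form of the $\ell^p$ Khinchin-type theorem you invoke) gives $\BA_{\sqrt{\phi}} \neq \emptyset$. Fixing $\xx \in \BA_{\sqrt{\phi}}$ with constant $\varepsilon$ and a Dirichlet sequence $\rr_k \to \xx$ for $\phi$, one obtains
\[
\varepsilon\sqrt{\phi(q_k)} \;\leq\; \|\rr_k - \xx\|_p \;\leq\; C_\xx\,\phi(q_k), \qquad q_k = H_\std(\rr_k),
\]
so $\sqrt{\phi(q_k)} \geq \varepsilon/C_\xx > 0$; on the other hand the left inequality forces $\sqrt{\phi(q_k)} \leq \|\rr_k - \xx\|_p/\varepsilon \to 0$, a contradiction. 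This is precisely Theorem \ref{theoremBAimpliesoptimality} applied with $\psi = \sqrt{\phi}$, and your nonincreasing-envelope reduction should be applied to $\sqrt{\phi}$ if one wants to meet that theorem's monotonicity hypothesis. No further analysis of the measure $\mu$ is needed.
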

\begin{proof}
The fact that $\psi\equiv 1$ is a Dirichlet function which has no badly approximable points is true of every Diophantine space, simply because $\QQ$ is dense in $X$. It remains to show optimality. Let $\phi:\Rplus\to(0,+\infty)$ be a function such that $\phi(q)\tendsto q 0$. Then $\sqrt\phi(q)\tendsto q 0$, and so by Theorem \ref{theoremLpkhinchin}, the set $\BA_{\sqrt\phi}$ is prevalent and in particular nonempty. Then by Theorem \ref{theoremBAimpliesoptimality}, the function $\phi$ cannot be a Dirichlet function, since $\frac{\phi(q)}{\sqrt\phi(q)}\tendsto q 0$.
\end{proof}

\subsection{Proof of Theorem \ref{theoremLpjarnikbesicovitch}}
\begin{theorem}
\label{theoremLpjarnikbesicovitch}
Fix $1\leq p < +\infty$ and consider the Diophantine space $(\ell^p(\R),\Q^\infty,H_\std)$. For any function $\psi\to 0$,
\[
\HD(\WA_\psi) = +\infty.
\]
\end{theorem}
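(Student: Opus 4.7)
The plan is to show $\HD(\WA_\psi) \geq n$ for every $n \in \N$ by building, inside $\WA_\psi$, a Cantor-type set $E$ of Hausdorff dimension at least $n$. I may assume $\psi$ is nonincreasing (the general case follows by choosing heights $Q_k$ from integers where $\psi$ is small, and ensuring $H(\rr_k)=Q_k$ by a minor adjustment). The decisive combinatorial input is the non-strong-discreteness of $\Z^\infty$ in $\ell^p(\N)$: for each $M$ and any block $B$ of $2M^p$ coordinates, the family $\mathcal F(B,M)$ of vectors $\sum_{i\in S}\ee_i$ with $S\subseteq B$ and $|S|=M^p$ has cardinality $\binom{2M^p}{M^p} \sim 4^{M^p}$; all these vectors have $\ell^p$-norm exactly $M$, and any two distinct ones are $\ell^p$-separated by at least $2^{1/p}$. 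This combinatorial richness is impossible in finite dimensions and is what drives the Hausdorff dimension above any finite bound.

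The construction is inductive. Choose disjoint index blocks $B_1,B_2,\ldots\subset\N$ with $|B_k|=2M_k^p$, sparsity parameters $M_k$, and a divisibility chain $Q_1\mid Q_2\mid\cdots$ of positive integers. At level $k$, the centers are the rationals $\rr_k = \sum_{j\leq k}\pp_j/Q_j$ as each $\pp_j$ ranges over $\mathcal F(B_j,M_j)$; each $\rr_k\in\Q^\infty$ has denominator dividing $Q_k$, so $H(\rr_k)\leq Q_k$. Set $E := \bigcap_k\bigcup_{\rr_k}B(\rr_k,\delta_k)$ with $\delta_k := 2\sum_{j>k}M_j/Q_j$, so that level-$k$ balls nest inside their level-$(k-1)$ parents. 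Any $\xx\in E$ satisfies $\|\xx-\rr_k\|_p \leq \sum_{j>k}M_j/Q_j$; choosing $Q_{k+1}$ (a multiple of $Q_k$) large enough that this tail is at most $\psi(Q_k)/k$ (always possible since $\psi>0$), combined with $H(\rr_k)\leq Q_k$ and the nonincrease of $\psi$, yields $\|\xx-\rr_k\|_p \leq \psi(H(\rr_k))/k$, so $\xx\in\WA_\psi$.

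For the dimension bound, put the uniform probability measure $\mu$ on $E$ giving each level-$k$ ball mass $\big(\prod_{j\leq k}\binom{2M_j^p}{M_j^p}\big)^{-1}$. Level-$k$ siblings are $\ell^p$-separated by at least $2^{1/p}/Q_k$, so any ball of radius $r\leq 2^{1/p-1}/Q_k$ meets at most one level-$k$ ball of the construction, giving $\mu(B(\xx,r)) \leq 4^{-\sum_{j\leq k}M_j^p}$. The mass distribution principle then yields $\HD(E) \geq \liminf_k (\log 4)\sum_{j\leq k}M_j^p/\log Q_k$. Selecting $M_{k+1}^p \geq (n+1)\log Q_k/\log 4$ once $Q_k$ is fixed, and then picking $Q_{k+1}$ as required for the approximation bound, forces this limit inferior to be at least $n$.

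The main obstacle is the interleaved recursion between $M_k$ and $Q_k$: the approximation constraint forces $Q_{k+1}$ to grow like $M_{k+1}/\psi(Q_k)$, while the dimension constraint forces $M_{k+1}^p$ to grow like $\log Q_k$; for very fast-decaying $\psi$ these parameters become astronomical. Since neither has an upper cap, both demands can always be met at every step in the order (fix $M_{k+1}$, then $Q_{k+1}$), so the recursion never breaks down. A refinement tuning $\delta_k$ against a prescribed gauge $f$ would upgrade the conclusion to $\HH^f(\WA_\psi) = +\infty$ as in the corresponding main theorem.
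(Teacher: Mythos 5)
The construction of $E$ and the verification that $E\subset\WA_\psi$ are sound, but the dimension lower bound has a genuine gap, and for $p=1$ and rapidly decaying $\psi$ the set $E$ you build actually has Hausdorff dimension $0$. The mass distribution principle needs $\mu(B(\xx,r))\leq Cr^n$ for \emph{all} small $r$. Your separation estimate gives $\mu(B(\xx,r))\leq m_k:=\prod_{j\leq k}\binom{2M_j^p}{M_j^p}^{-1}$ for $r$ ranging all the way down to roughly $2^{1/p-1}/Q_{k+1}$, so the exponent your argument actually delivers is $\liminf_k(\log 4)\sum_{j\leq k}M_j^p/\log Q_{k+1}$, not $/\log Q_k$. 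This off-by-one is fatal here: the approximation requirement forces $\delta_k\leq\psi(Q_k)/k$ and hence $Q_{k+1}\gtrsim kM_{k+1}/\psi(Q_k)$, so $\log Q_{k+1}\geq\log(1/\psi(Q_k))$, a quantity that can dwarf $\sum_{j\leq k}M_j^p$ because $M_1,\ldots,M_k$ are all fixed \emph{before} $Q_k$, and $Q_k$ is itself forced to be enormous by the previous step. Put differently, the level-$k$ ball $B(\rr_k,\delta_k)$ has $\mu$-measure exactly $m_k$ and radius $\delta_k\leq\psi(Q_k)/k$, so $\mu(B(\xx,r))\leq Cr^n$ already fails at that scale unless $(\log 4)\sum_{j\leq k}M_j^p\geq n\log(k/\psi(Q_k))$. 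For $p=1$ and $\psi(q)=2^{-q}$ this cannot be arranged: $E$ is covered by at most $4^{\sum_{j\leq k}M_j}$ balls of radius at most $2^{-Q_k}$ with $Q_k\gtrsim kM_k2^{Q_{k-1}}$, which gives $\HH^s(E)=0$ for every $s>0$. The root cause is that your branching factor $\binom{2M^p}{M^p}\approx 4^{M^p}$ is tied to the norm $M$ of the displacement vectors, while that norm is capped by the approximation constraint; at scale $\delta_k\approx\psi(Q_k)/k$ you would need branching of order $\psi(Q_k)^{-n}$.

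There are two ways out. One is to decouple branching from norm: take $\pp=\sum_{i\in S}\ee_i$ with $S$ ranging over the $M^p$-element subsets of a block of size $L\gg M^p$; these $\binom{L}{M^p}$ vectors still have norm $M$ and pairwise separation $2^{1/p}$, and $L$ can be chosen \emph{after} $Q_k$. But then the intermediate scales $2^{1/p}/Q_{k+1}\leq r\leq\delta_k$ require a combinatorial count of how many children lie within distance $r$ of a point (there is no volume argument available in infinite dimensions), and that multi-scale estimate is the real work. The paper's proof (Theorem \ref{theoremjarniknoncobounded}(ii), which applies since $\Z^\infty$ is not strongly discrete in $\ell^p$) sidesteps Frostman measures entirely: using only that $S=\Lambda\cap B(\0,C)$ is infinite, it builds $\pi(S^\N)\subset\WA_\psi$ via $\pi((\pp_n)_1^\infty)=\sum_n\pp_n/q_n$ and shows that \emph{every} cover of $\pi(S^\N)$ by small sets must contain infinitely many sets of diameter bounded below --- a finite family of sets of diameter less than $\varepsilon_\Lambda/(3q_N)$ cannot meet all of the infinitely many pairwise $\varepsilon_\Lambda/(3q_N)$-separated level-$N$ cylinders --- whence $\HH^f(\WA_\psi)=+\infty$ for every gauge $f$ with no parameter bookkeeping at all. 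You should adopt one of these two fixes; as written, the recursion you describe does not close.
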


[Add in proof.]

\section{Diophantine approximation in $c_0(\R)$}

\begin{theorem}
\label{theoremc0khinchin}
Consider the Diophantine space $(c_0(\R),\Q^\infty,H_\std)$.The function $\psi(q) = 1/q$ is an optimal Dirichlet function, and $\BA_\psi$ is prevalent.
\end{theorem}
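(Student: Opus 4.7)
The plan is to mimic the prevalence argument of Theorem \ref{theoremkhinchinnoncobounded}: construct a compactly supported probability measure $\mu$ on $X$ and show via Borel--Cantelli that for every $\vv\in X$, $\mu$-a.e.\ $\yy$ satisfies $\inf_{q\in\N}\dist(q(\yy - \vv),\Lambda) > 0$. This inequality is equivalent to $\yy - \vv \in \BA_{\psi_1}$, since any approximation $\|\xx - \pp/q\| < \varepsilon/H(\pp/q)$ with $H(\pp/q) = q'\leq q$ translates to $\|q'\xx - \pp'\| < \varepsilon$ with $\pp'\in\Lambda$; hence badly approximable points in the sense of Definition \ref{definitionbadlyapproximable} are exactly those with $\inf_{q\in\N}\dist(q\xx, \Lambda) > 0$. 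So once the above holds for every $\vv$, $\WA_{\psi_1}$ is shy and $\BA_{\psi_1}$ is prevalent.

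For the construction, I would select a linearly independent sequence $\pp_1,\pp_2,\ldots\in\Lambda$ with $\|\pp_n\|\leq C$ uniformly; such a sequence exists because $X$ is infinite-dimensional, $\R\Lambda$ is dense in $X$, and $\codiam(\Lambda) < \infty$ ensures that every bounded-norm vector admits a bounded-norm lattice representative. Pick rapidly growing integers $N_n$ and positive scalars $\rho_n$ with $\sum_n\rho_n < \infty$, set $\vv_n := (\rho_n/N_n)\pp_n$, and let $\mu_n$ be the uniform probability measure on $\{j\vv_n : 0\leq j\leq N_n - 1\}$. The infinite convolution $\mu := \mu_1 * \mu_2 * \cdots$ converges and is compactly supported because $\sum_n (N_n-1)\|\vv_n\|\leq C\sum_n\rho_n < \infty$. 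A $\mu$-random point $\yy = \sum_n i_n\vv_n$ then has independent uniform components $i_n\in\{0,\ldots,N_n - 1\}$.

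Fix $\vv\in X$, $\varepsilon > 0$, and let $E_q := \{\yy : \dist(q(\yy - \vv),\Lambda) < \varepsilon\}$. The key tool is that for any continuous functional $L\in X^*$ with $L(\Lambda)\subset\Z$ and $\|L\|\leq 1$, we have $\dist(\yy',\Lambda) \geq \dist(L(\yy'),\Z)$, since $\|\yy' - \pp\|\geq |L(\yy' - \pp)|$ and $L(\pp)\in\Z$ for every $\pp\in\Lambda$. In the prototype cases $X = \ell^\infty(\N)$ with $\Lambda = \Z^\N$, and $X = c_0(\N)$ with $\Lambda = \Z^\infty$, one takes $\pp_n := \ee_n$, so the coordinate functionals $\ee_n^*$ decouple the problem: the event $E_q$ forces $\dist(q(y_n - v_n),\Z) < \varepsilon$ for every coordinate $n$, and because $q(y_n - v_n)$ is distributed on an interval of length $q b_n$ (with $b_n\to 0$ governing the support of the $n$-th factor), the conditional probability is at most $4\varepsilon$ whenever $q b_n \geq 1$. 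Independence gives $\mu(E_q) \leq (4\varepsilon)^{\#\{n : q b_n \geq 1\}}$, summable in $q$ for $\varepsilon$ small.

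Once $\sum_q\mu(E_q) < \infty$, Borel--Cantelli yields $\mu$-a.e.\ $\yy\in E_q$ for only finitely many $q$; for the remaining $q$'s, $\dist(q(\yy - \vv),\Lambda) > 0$ as long as $\yy - \vv\notin\Q\Lambda$, which holds $\mu$-a.e.\ since $\mu$ is non-atomic. The main obstacle is to carry out the probability bound for an abstract cobounded lattice, where the clean decoupling available in the prototype cases is not automatic; one must use cobounded-ness to produce a sufficiently rich family of dual functionals $L_m\in X^*$ with $L_m(\Lambda)\subset\Z$, together with lattice vectors $\pp_n$ for which $L_m(\pp_n)$ forms an approximately diagonal system. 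This structural input is precisely where the hypothesis $\codiam(\Lambda) < \infty$ must enter.
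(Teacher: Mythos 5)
Your reduction of bad approximability to $\inf_{q\in\N}\dist(q\xx,\Lambda)>0$ is correct, and your overall strategy --- a compactly supported product-type measure, coordinatewise decoupling, and Borel--Cantelli over $q$ --- is the same as the paper's. But there are two genuine gaps. The main one is that the probabilistic estimate fails for the measure you actually construct. You take $\mu_n$ to be the uniform measure on the \emph{finite} arithmetic progression $\{j\vv_n:0\leq j\leq N_n-1\}$, but then estimate the probability of $\dist(q(y_n-v_n),\Z)<\varepsilon$ as if $y_n$ were uniformly distributed on an interval. For the discrete measure the claimed bound $4\varepsilon$ is false for infinitely many $q$: with $\vv_n=(\rho_n/N_n)\ee_n$ the points $qj\rho_n/N_n \bmod 1$ all coincide whenever $q\rho_n/N_n\in\Z$ (e.g.\ $q=N_n$, $\rho_n=1$), and more generally whenever $q\rho_n/N_n$ is within $\varepsilon/N_n$ of an integer the conditional probability is $1$, not $O(\varepsilon)$. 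Since you sum over all $q\in\N$ with the scales $n$ fixed once and for all, such degenerate pairs $(q,n)$ cannot be avoided without further argument. The paper sidesteps this by using a genuinely continuous measure: $\mu$ is the image of Lebesgue measure on $[0,1]^\infty$ under coordinatewise multiplication by $(1/n)_1^\infty$, so $y_n$ is uniform on $[0,1/n]$, the variable $q(y_n-v_n)$ is uniform on an interval of length $q/n\geq 1$ for every $n\leq q$, each such coordinate contributes a factor at most $2/3$, and the product bound $(2/3)^q$ is summable; the support $\prod_n[0,1/n]$ is still compact in $c_0(\R)$. If one insists on discrete factors, the argument must instead be restructured as in Theorem \ref{theoremkhinchincobounded}: many \emph{distinct directions} at each scale $n$, a separation lemma bounding how many translates can meet $\bigcup_{q\leq\lambda^n}\Lambda/q$, and Borel--Cantelli over $n$ with each $q$ matched to the scale $\lambda^{n-1}<q\leq\lambda^n$. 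Relatedly, your closing concern about abstract cobounded lattices is beside the point for this statement, which concerns the concrete pair $X=c_0(\R)$, $\Lambda=\Z^\infty$, where the coordinate functionals are already available.

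Second, you prove only half the theorem. The statement also asserts that $\psi_1(q)=1/q$ is a Dirichlet function, which does not follow from the prevalence of $\BA_{\psi_1}$ and needs its own (easy) argument: given $\xx\in c_0(\R)$ and $q\in\N$, set $r_n^{(q)}=\lfloor qx_n\rfloor/q$; since $x_n\to 0$, all but finitely many of these vanish, so $\rr_q\in\Q^\infty$ with $H_\std(\rr_q)\leq q$ and $\|\xx-\rr_q\|_\infty\leq 1/q$, and $\rr_q\to\xx$. Optimality is then a consequence of Theorem \ref{theoremBAimpliesoptimality} once $\BA_{\psi_1}\neq\emptyset$; this deduction should be stated explicitly rather than left implicit.
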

\begin{proof}
For any $\xx = (x_n)_1^\infty\in c_0(\R)$, fix $q\in\N$ and for each $n\in\N$ let
\[
r_n^{(q)} = \frac{1}{q}\lfloor qx_n\rfloor.
\]
Then $\rr_q = (r_n^{(q)})_1^\infty\in\Q^\infty$ and $q\rr_q\in\Z^\infty$, which implies $H_\std(\rr_q)\leq q$. On the other hand,
\[
\|\xx - \rr_q\|_\infty = \sup_{n \in\N} \frac{1}{q}|qx_n - \lfloor qx_n\rfloor| \leq \frac{1}{q}\cdot 
\]
Clearly $\rr_q\tendsto q \xx$. This demonstrates that $\psi(q) = 1/q$ is a Dirichlet function.

To show that $\BA_\psi$ is prevalent, define the measure $\mu$ on $c_0(\R)$ to be the image of Lebesgue measure on $[0,1]^\infty$ under multiplication by the sequence $(1/n)_1^\infty$. Fix $\vv\in c_0(\R)$, and we will show that $\mu(\WA_\psi + \vv) = 0$. Indeed, for each $q\in\N$,
\begin{align*}
\prob\big(\dist(q[\xx - \vv],\Z^\infty)\leq 1/4\big)
&= \prod_{n = 1}^\infty\prob\big(\dist(q[x_n - v_n],\Z)\leq 1/4\big)\\
&\leq (2/3)^{\#(n\in\N:n\leq q)}
= (2/3)^q,
\end{align*}
since $\prob\big(\dist(q[x_n - v_n],\Z)\leq 1/4\big) \leq 2/3$ whenever $n\leq q$. Thus by the easy direction of the Borel--Cantelli lemma, we have
\[
\prob\left(\exists^\infty q\in\N \;\; \dist(q[\xx - \vv],\Z^\infty)\leq 1/4\right) = 0.
\]
But clearly, if $\xx\in\WA_\psi + \vv$, then $\dist(q[\xx - \vv],\Z^\infty)\leq 1/4$ for infinitely many $q\in\N$. Thus $\mu(\WA_\psi + \vv) = 0$, and $\BA_\psi$ is prevalent.
\end{proof}

\begin{theorem}
\label{theoremc0jarnikbesicovitch}
Consider the Diophantine space $(c_0(\R),\Q^\infty,H_\std)$. For any function $\psi\to 0$,
\[
\HD(\WA_\psi) = +\infty.
\]
\end{theorem}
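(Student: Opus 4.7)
The plan is to deduce the result from the general Theorem~\ref{theoremjarnikcobounded}, which gives the same conclusion for every nonincreasing $\psi\to 0$. I first verify that its hypotheses apply to $(c_0(\R),\Q^\infty,H_\std)$: $c_0(\R)$ is infinite-dimensional, and by Example~\ref{examplec0}, $\Z^\infty$ is a lattice in $c_0(\R)$; moreover $\Z^\infty$ is cobounded, because for every $\xx = (x_i) \in c_0(\R)$ the nearest-integer sequence $\pp = (\lfloor x_i + 1/2\rfloor)_i$ lies in $\Z^\infty$ (only finitely many $x_i$ exceed $1/2$ in absolute value, since $x_i \to 0$) and satisfies $\|\xx - \pp\|_\infty \leq 1/2$, so $\codiam(\Z^\infty) \leq 1/2$.

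The only gap between the two statements is that Theorem~\ref{theoremjarnikcobounded} requires $\psi$ nonincreasing, whereas here it is only assumed to tend to zero. I would bridge this by passing to the downward monotonization
\[
\tilde\psi(q) := \min\{\psi(k) : k\in\N,\ k\leq \max(q,1)\}.
\]
This is positive (being a finite minimum of positive numbers), nonincreasing on $(0,+\infty)$ by construction, and satisfies $\tilde\psi(n) \leq \psi(n)$ for every $n\in\N$, which is all that matters for the definition of $\WA$. It also tends to zero: given $\varepsilon>0$, the hypothesis $\psi\to 0$ supplies some $N\in\N$ with $\psi(N)<\varepsilon$, after which $\tilde\psi(q) \leq \psi(N) < \varepsilon$ for every $q\geq N$.

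From $\tilde\psi \leq \psi$ on $\N$ it follows that the condition $\dist(\rr,\xx) < \varepsilon\tilde\psi(H(\rr))$ is at least as strong as $\dist(\rr,\xx) < \varepsilon\psi(H(\rr))$, and hence $\WA_{\tilde\psi} \subseteq \WA_\psi$. Applying Theorem~\ref{theoremjarnikcobounded} to the cobounded lattice $\Z^\infty \leq c_0(\R)$ with the nonincreasing function $\tilde\psi \to 0$ yields $\HD(\WA_{\tilde\psi}) = +\infty$, and monotonicity of Hausdorff dimension then gives $\HD(\WA_\psi) \geq \HD(\WA_{\tilde\psi}) = +\infty$. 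The main obstacle I anticipate is a potential circular dependency, should the proof of Theorem~\ref{theoremjarnikcobounded} itself invoke the $c_0$ case; in that event the backup plan is a direct Cantor-type construction, inductively building a nested sequence of rational balls $B(\rr_n,\frac{1}{n}\psi(H(\rr_n)))$ with $H(\rr_n)\to +\infty$ and branching at each stage through many distinct rational perturbations supported on previously unused coordinates -- the infinite-dimensionality of $c_0(\R)$ supplies arbitrarily many such coordinates, making the branching ratio large enough to inflate the Hausdorff dimension of the limit set beyond any prescribed finite value.
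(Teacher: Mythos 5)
Your proposal is correct and matches the paper's route: the paper subsumes this statement into the general cobounded result (Theorem \ref{theoremjarnikcobounded}), which in turn rests on the Cantor-type construction of Theorem \ref{theoremjarniknoncobounded}(ii) for non--strongly-discrete lattices, so there is no circularity and your backup construction is not needed. Your monotonization $\tilde\psi$ is a valid and necessary bridge from the ``nonincreasing $\psi\to 0$'' hypothesis of the general theorem to the bare ``$\psi\to 0$'' hypothesis here, since $H_\std$ takes values in $\N$ and $\WA_{\tilde\psi}\subseteq\WA_\psi$.
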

[Add in proof.]

}

\draftnewpage
\section{Infinite-dimensional non-cobounded case}
\label{sectionnoncobounded}
In this section, we assume that $\Lambda$ is a non-cobounded lattice in an infinite-dimensional Banach space $X$, and we consider the Diophantine space $(X,\Q\Lambda,H_\std)$. We begin by proving the following:

\begin{theorem}[Khinchin-type theorem, Jarn\'ik--Schmidt type theorem]
\label{theoremkhinchinnoncobounded}
For any function $\psi\to 0$, $\BA_\psi$ is prevalent. In particular, $\HD(\BA_\psi) = +\infty$.
\end{theorem}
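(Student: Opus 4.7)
The plan is to produce a compactly supported probability measure $\mu$ on $X$ which is transverse to $\WA_\psi$, i.e., $\mu(\WA_\psi + \vv) = 0$ for every $\vv \in X$; prevalence of $\BA_\psi$ follows, and Proposition~\ref{propositionHDinfinity} then gives $\HD(\BA_\psi) = +\infty$. I will build $\mu$ as an infinite convolution of discrete measures
\[
\mu_n \;=\; \frac{1}{2^n}\sum_{i=0}^{2^n-1}\delta_{i\vv_n},
\]
generalizing the Bernoulli-convolution pattern that works for $\ell^p(\N)$. First fix a sequence $(\radius_n)_1^\infty$ of radii decreasing fast enough that $\sum_n 2^n \radius_n < +\infty$ (say $\radius_{n+1} = \radius_n/2^{n+5}$), together with integers $N_n$ increasing to $+\infty$ such that $\psi(q) \leq \radius_{n+1}/8$ for all $q \geq N_n$; both are possible since $\psi \to 0$.

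The central step---and the only place where non-coboundedness of $\Lambda$ is really used---is the construction of $\vv_n$ with $\|\vv_n\| \leq \radius_n$ satisfying the separation property
\[
\dist\!\Big(j\vv_n,\; \tfrac{1}{N_n!}\Lambda\Big) \;>\; \tfrac{\radius_n}{2} \qquad (1 \leq j \leq 2^n - 1). \qquad (\ast)
\]
Set $L_n := \operatorname{lcm}(1, 2, \ldots, 2^n-1)$ and consider the subgroup $\Lambda_n := \frac{1}{L_n N_n!}\Lambda$. Since $\codiam(\Lambda_n) = \codiam(\Lambda)/(L_n N_n!) = +\infty$, the continuous function $\xx \mapsto \dist(\xx, \Lambda_n)$ is unbounded on $X$; it vanishes at $\0$, so the intermediate value theorem applied along any ray $t\ww$ realizing a large value gives $\yy_n \in X$ with $\dist(\yy_n, \Lambda_n) = 3\radius_n/4$. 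Subtracting an almost-nearest $\pp_n \in \Lambda_n$ (with $\|\yy_n - \pp_n\| \leq 3\radius_n/4 + \epsilon$ for a fixed tiny $\epsilon$), we set $\vv_n := \yy_n - \pp_n$, obtaining $\|\vv_n\| \leq \radius_n$ and $\dist(\vv_n, \Lambda_n) = 3\radius_n/4$. For each $j \in \{1, \ldots, 2^n - 1\}$, divisibility $j \mid L_n$ gives $\frac{1}{j N_n!}\Lambda \subseteq \Lambda_n$, so $\dist(\vv_n, \frac{1}{j N_n!}\Lambda) \geq 3\radius_n/4$, and multiplying by $j$ yields $(\ast)$.

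With $(\ast)$ in hand, the remainder is the standard Bernoulli-convolution analysis. Property $(\ast)$ delivers the ``at most one'' claim: for every $\xx \in X$, at most one $i \in \{0, \ldots, 2^n-1\}$ can satisfy $B(\xx + i\vv_n, \radius_n/4) \cap \frac{1}{N_n!}\Lambda \neq \emptyset$, since subtracting any two such relations would place $(i_2 - i_1)\vv_n$ within $\radius_n/2$ of $\frac{1}{N_n!}\Lambda$, violating $(\ast)$. Representing a $\mu$-random point as $\xx = \sum_n \xx_n$ with $\xx_n \sim \mu_n$ independent, and fixing $\vv \in X$, this bounds $\prob(E_n^\vv) \leq 2^{-n}$ for the event $E_n^\vv := \{B(\sum_{m\leq n}\xx_m - \vv, \radius_n/4) \cap \frac{1}{N_n!}\Lambda \neq \emptyset\}$, so Borel--Cantelli yields that almost surely only finitely many $E_n^\vv$ occur. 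Conversely, if $\xx - \vv \in \WA_\psi$ then there is a sequence $\rr_k \to \xx - \vv$ in $\Q\Lambda$ with $q_k := H_\std(\rr_k) \to +\infty$ and $\|\rr_k - (\xx - \vv)\|/\psi(q_k) \to 0$; choosing $n_k$ minimal with $N_{n_k} \geq q_k$, the estimates $\psi(q_k) \leq \radius_{n_k}/8$ and $\|\sum_{m > n_k}\xx_m\| \leq \radius_{n_k}/8$ together give $\rr_k \in B(\sum_{m \leq n_k}\xx_m - \vv, \radius_{n_k}/4) \cap \frac{1}{N_{n_k}!}\Lambda$ for all large $k$, so $E_{n_k}^\vv$ holds for infinitely many distinct indices---a contradiction. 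Hence $\mu(\WA_\psi + \vv) = 0$. The main obstacle is thus the non-coboundedness step: on a cobounded lattice $\dist(\cdot, \Lambda_n)$ is bounded above by $\codiam(\Lambda)/(L_n N_n!)$, which is incompatible with $(\ast)$ once $\psi$ decays only slowly, consistent with the weaker conclusion of Theorem~\ref{theoremkhinchincobounded} in the cobounded case.
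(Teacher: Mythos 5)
Your proof is correct and follows essentially the same route as the paper: a compactly supported Bernoulli-convolution measure $\mu = \Sigma[\prod_n \mu_n]$ with $\mu_n$ uniform on $\{0,\vv_n,\dots,(2^n-1)\vv_n\}$, an ``at most one index'' separation claim, Fubini plus Borel--Cantelli for transversality to $\WA_\psi$, and Proposition \ref{propositionHDinfinity} for the dimension statement. The only variation is in producing $\vv_n$: the paper rescales a vector of norm $M_n = 2^nN_n!$ that is far from $\Lambda$ and checks the separation by a triangle-inequality computation, whereas you work directly with the rescaled lattice $\tfrac{1}{L_nN_n!}\Lambda$ (where $L_n = \operatorname{lcm}(1,\dots,2^n-1)$) and use the divisibility containment $\tfrac{1}{jN_n!}\Lambda \subseteq \tfrac{1}{L_nN_n!}\Lambda$ together with the scaling identity $\dist(j\vv,\tfrac{1}{N_n!}\Lambda) = j\,\dist(\vv,\tfrac{1}{jN_n!}\Lambda)$ --- a clean and equally valid packaging of the same use of non-coboundedness.
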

\begin{proof}
We will need the following lemma:
\begin{lemma}
\label{lemmaw}
For any $0 < \varepsilon < R < +\infty$, there exists $\ww\in X$ so that $\|\ww\| = R$ and
\[
\dist(\ww,\Lambda) \geq R - \varepsilon.
\]
\end{lemma}
\begin{subproof}
Since $\Lambda$ is not cobounded, there exists $\xx\in X$ such that $S := \dist(\xx,\Lambda) \geq R$. By the definition of distance, there exists $\pp\in\Lambda$ such that
\begin{equation}
\label{xpbounds}
S \leq \|\xx - \pp\| \leq S + \varepsilon.
\end{equation}
Let
\[
\ww = R\frac{\xx - \pp}{\|\xx - \pp\|}.
\]
Clearly $\|\ww\| = R$. On the other hand,
\begin{align*}
\dist(\ww,\Lambda) &\geq \dist\left(\xx - \pp,\Lambda \right) - \dist(\ww,\xx - \pp)\\
&= S - \|\xx - \pp\|\left|1 - \frac{R}{\|\xx - \pp\|}\right|\\
&= S - \big|\|\xx - \pp\| - R\big|\\
&\geq S - \left|S - R\right| - \varepsilon \by{\eqref{xpbounds}}\\
&= R - \varepsilon. \since{$S\geq R$}
\end{align*}
\end{subproof}
Let $(\radius_n)_1^\infty$ be the unique sequence satisfying $\radius_1 = 1$ and
\begin{equation}
\label{rhodef}
\radius_{n + 1} = \frac{\radius_n}{2^{n + 5}}\cdot
\end{equation}
For each $n\in\N$, let $N_n\in\N$ be large enough so that
\begin{equation}
\label{Nndef}
\psi(q) \leq \frac{\radius_{n + 1}}{8} \all q\geq N_n.
\end{equation}
Let
\[
M_n = 2^n N_n!
\]
By Lemma \ref{lemmaw}, there exists $\ww_n\in X$ be such that $\|\ww_n\| = M_n$ and
\begin{equation}
\label{Mnrn}
\dist(\ww_n,\Lambda)\geq M_n - \radius_n/4.
\end{equation}
Let $\vv_n = \frac{\radius_n}{M_n}\ww_n$, so that $\|\vv_n\| = \radius_n$.
\begin{claim}
\label{claimatmostone}
For each $n\in\N$ and for each $\xx\in X$,
\[
\#\left\{i = 0,\ldots,2^n - 1:B(\xx + i\vv_n,\radius_n/4)\cap \frac{\Lambda}{N_n!}\neq \emptyset\right\} \leq 1.
\]
\end{claim}
\begin{proof}
By contradiction, suppose there exist $0 \leq i_1 < i_2 < 2^n$ and $\xx_1,\xx_2\in X$ such that
\[
\xx_j\in B(\xx + i_j\vv_n,\radius_n/4)\cap \frac{\Lambda}{N_n!},\;\; j = 1,2.
\]
Thus
\begin{align*}
\frac{\radius_n}{2}
&\geq \left\|(\xx_2 - \xx - i_2\vv_n) - (\xx_1 - \xx - i_1\vv_n)\right\|\\
&= \|(\xx_2 - \xx_1) - (i_2 - i_1)\vv_n\|\\
&\geq \dist\left((i_2 - i_1)\vv_n,\frac{\Lambda}{N_n!}\right)\\
&= \frac{1}{N_n!}\dist\big((i_2 - i_1)N_n!\vv_n,\Lambda\big)\\
&= \frac{1}{N_n!}\dist\left((i_2 - i_1)N_n!\frac{\radius_n}{M_n}\ww_n,\Lambda\right).
\end{align*}
Now
\[
(i_2 - i_1)N_n!\frac{\radius_n}{M_n} = (i_2 - i_1)\frac{\radius_n}{2^n} \leq \radius_n \leq 1,
\]
and so by the triangle inequality
\begin{align*}
N_n!\frac{\radius_n}{2} &\geq \dist\left((i_2 - i_1)N_n!\frac{\radius_n}{M_n}\ww_n,\Lambda\right)\\
&\geq \dist(\ww_n,\Lambda) - \left\|\ww_n - (i_2 - i_1)N_n!\frac{\radius_n}{M_n}\ww_n\right\| \\
&\geq (M_n - \radius_n/4) - (M_n - (i_2 - i_1)N_n! \radius_n) \by{\eqref{Mnrn}}\\
&\geq (N_n! - 1/4)\radius_n, \since{$i_2 - i_1 \geq 1$}
\end{align*}
a contradiction.
\QEDmod\end{proof}
For each $n\in\N$, let
\[
\mu_n = \frac{1}{2^n}\sum_{i = 0}^{2^n - 1}\delta_{i\vv_n};
\]
then $\mu_n$ is a compactly supported probability measure on $B(\0,2^n\radius_n)$. Define
\[
\Sigma:\prod_{n = 1}^\infty B(\0,2^n\radius_n)\to X
\]
by
\begin{equation}
\label{Sigmadef}
\Sigma((\xx_n)_1^\infty) = \sum_{n = 1}^\infty \xx_n,
\end{equation}
and let $\mu = \Sigma[\prod_{n = 1}^\infty \mu_n]$. Note that if $K_n$ is the support of $\mu_n$, then $\mu$ gives full measure to $\Sigma(\prod_{n = 1}^\infty K_n)$, which is compact, so $\mu$ is compactly supported.

To complete the proof, we will show that $\mu$ is transverse to $\WA_\psi$. To this end, fix $\vv\in X$, and we will show that $\mu(\WA_\psi + \vv) = 0$.

\ignore{
A $\mu$-random point $\xx\in X$ can be represented in the form
\[
\xx = \Sigma((\xx_n)_1^\infty) = \sum_{n = 0}^\infty \xx_n,
\]
where each point $\xx_n$ is $\mu_n$-random, i.e.
\[
\xx_n = i_n\vv_n,
\]
where $i_n\in\{0,\ldots,2^n - 1\}$ is random with respect to the uniform distribution.}

Fix $n\in\N$; for each sequence $(\xx_j)_1^{n - 1}$, applying Claim \ref{claimatmostone} with $\xx = \sum_{j = 1}^{n - 1}\xx_j - \vv$ shows that
\[
\mu_n\left\{\xx_n: B\left(\sum_{j = 0}^n \xx_j - \vv,\radius_n/4\right)\cap \frac{\Lambda}{N_n!}\neq \emptyset\right\} \leq \frac{1}{2^n},
\]
and Fubini's theorem gives
\[
\left(\prod_{j = 1}^\infty\mu_j\right)\left\{(\xx_j)_1^\infty: B\left(\sum_{j = 0}^n \xx_j - \vv,\radius_n/4\right)\cap \frac{\Lambda}{N_n!}\neq \emptyset\right\} \leq \frac{1}{2^n}\cdot
\]
Thus by the easy direction of the Borel--Cantelli lemma, the set
\[
N =\left\{(\xx_j)_1^\infty:\exists^\infty n\in\N \;\; B\left(\sum_{j = 0}^n \xx_j - \vv,\radius_n/4\right)\cap \frac{\Lambda}{N_n!}\neq \emptyset\right\}
\]
is a $\prod_{j = 1}^\infty\mu_j$-nullset. So to complete the proof, it suffices to show the following:
\begin{claim}
\label{claimNnfactorial}
$\Sigma^{-1}(\WA_\psi + \vv) \subset N$, i.e. if a sequence $(\xx_n)_1^\infty \in \prod_{n = 1}^\infty B(\0,2^n\radius_n)$ satisfies
\[
\xx = \sum_{n = 0}^\infty \xx_n \in \WA_\psi + \vv,
\]
then there exist infinitely many $n\in\N$ for which
\begin{equation}
\label{rn4LambdaNn}
B\left(\sum_{j = 0}^n \xx_j - \vv,\radius_n/4\right)\cap \frac{\Lambda}{N_n!}\neq \emptyset.
\end{equation}
\end{claim}
\begin{proof}
Let $(\rr_k)_1^\infty$ be a sequence of rational points whose limit is $\xx - \vv$ and which satisfy
\[
\|\xx - \vv - \rr_k\| \leq \psi(q_k),
\]
where $q_k = H_\std(\rr_k)$.

Fix $k\in\N$, and let $n = n_k$ be minimal so that $N_n\geq q_k$. Then $N_{n - 1} < q_k$, and so by \eqref{Nndef},
\[
\psi(q_k) \leq \frac{\radius_n}{8}\cdot
\]
On the other hand, by \eqref{rhodef},
\[
\left\|\sum_{j = n + 1}^\infty\xx_j\right\| \leq \sum_{j = n + 1}^\infty 2^j\radius_j \leq \frac{\radius_n}{8}\cdot
\]
Combining the three preceding equations gives
\[
\left\|\sum_{j = 0}^n\xx_j - \vv - \rr_k\right\| \leq \frac{\radius_n}{4},
\]
i.e.
\[
\rr_k\in B\left(\sum_{j = 0}^n \xx_j - \vv,\radius_n/4\right)\cap \frac{\Lambda}{N_n!}\cdot
\]
Since the sequence $(n_k)_1^\infty$ is clearly unbounded, this demonstrates that \eqref{rn4LambdaNn} holds for infinitely many $n$.
\QEDmod\end{proof}
Thus $\mu$ is transverse to $\WA_\psi$ and so $\BA_\psi$ is prevalent; thus $\HD(\BA_\psi) = +\infty$ by Proposition \ref{propositionHDinfinity}.
\end{proof}

Next, we deduce Theorem \ref{theoremdirichletnoncobounded} as a corollary of Theorem \ref{theoremkhinchinnoncobounded}.

\begin{theorem}[Dirichlet-type theorem]
\label{theoremdirichletnoncobounded}
The function $\psi_0 \equiv 1$ is an optimal uniformly Dirichlet function. However, $\BA_{\psi_0} = \emptyset$.
\end{theorem}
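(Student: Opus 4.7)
I would dispatch the three assertions in order; the first two are immediate from density of $\Q\Lambda$ in $X$, and the substantive one (optimality) will follow by combining Theorem \ref{theoremkhinchinnoncobounded} with Theorem \ref{theoremBAimpliesoptimality}.

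For uniform Dirichlet-ness of $\psi_0\equiv 1$: given any $\xx\in X$, use density to pick a sequence $(\rr_n)$ in $\Q\Lambda$ converging to $\xx$, and discard initial terms so that $\dist(\xx,\rr_n)\leq 1 = \psi_0\circ H(\rr_n)$ for every $n$; then the uniform choice $C_\xx = 1$ works. For $\BA_{\psi_0} = \emptyset$: the defining condition would require $\dist(\rr,\xx)\geq\varepsilon$ for every $\rr\in\Q\Lambda$, which directly contradicts density. Both of these hold in any Diophantine space, so nothing specific to $X$ or $\Lambda$ is used.

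The substantive claim is optimality, which I would prove by contradiction. Suppose some Dirichlet function $\phi$ satisfies $\phi/\psi_0 = \phi \to 0$. One cannot apply Theorem \ref{theoremBAimpliesoptimality} to $\phi$ directly, because that theorem demands a \emph{nonincreasing} witness function with nonempty badly approximable set. My plan is to manufacture such a witness in two steps. First, monotonize: define $\tilde\phi(q) := \sup_{q'\geq q}\phi(q')$, which is nonincreasing, still tends to $0$, and dominates $\phi$; by Observation \ref{observationdirichletpartialorder} it is also Dirichlet. Next, set $\psi := \sqrt{\tilde\phi}$, which is nonincreasing and tends to $0$. Theorem \ref{theoremkhinchinnoncobounded}, which has no monotonicity hypothesis, then gives $\BA_\psi$ prevalent and in particular nonempty, and since $\tilde\phi/\psi = \sqrt{\tilde\phi}\to 0$, Theorem \ref{theoremBAimpliesoptimality} applied to the pair $(\psi,\tilde\phi)$ forces $\tilde\phi$ not to be Dirichlet, contradicting the previous paragraph.

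No serious obstacle arises, because the real work has already been done in proving Theorem \ref{theoremkhinchinnoncobounded}. The only delicate point is the mismatch between the nonincreasing hypothesis of Theorem \ref{theoremBAimpliesoptimality} and the arbitrary given $\phi$, and the monotonization-then-square-root construction is precisely what bridges it.
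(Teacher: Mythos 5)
Your proof is correct and follows essentially the same route as the paper: both reduce optimality to Theorem \ref{theoremkhinchinnoncobounded} via the square-root trick and then invoke Theorem \ref{theoremBAimpliesoptimality}. Your extra monotonization step $\tilde\phi(q)=\sup_{q'\geq q}\phi(q')$ is a worthwhile refinement rather than a detour, since the paper applies Theorem \ref{theoremBAimpliesoptimality} directly with $\psi=\sqrt\phi$, which need not be nonincreasing, so your version supplies a hypothesis that the paper's own argument silently omits.
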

\begin{proof}
The fact that $\psi\equiv 1$ is a Dirichlet function which has no badly approximable points is true of every Diophantine space, simply because $\QQ$ is dense in $X$. It remains to show optimality. Let $\phi:\Rplus\to(0,+\infty)$ be a function such that $\frac{\phi}{\psi} = \phi \to 0$. Then $\sqrt\phi \to 0$, and so by Theorem \ref{theoremkhinchinnoncobounded}, the set $\BA_{\sqrt\phi}$ is prevalent and in particular nonempty. Then by Theorem \ref{theoremBAimpliesoptimality}, the function $\phi$ cannot be a Dirichlet function, since $\frac{\phi}{\sqrt\phi}\to 0$.
\end{proof}

Finally, we prove the infinite-dimensional version of the Jarn\'ik--Besicovitch theorem.

\begin{theorem}[Jarn\'ik--Besicovitch type theorem]
\label{theoremjarniknoncobounded}
~
\begin{itemize}
\item[(i)] For any $s\geq 0$, we have $\HD(\WA_{\psi_s}) = +\infty$.
\item[(ii)] Suppose that $\Lambda$ is not strongly discrete. Then for any nonincreasing function $\psi\to 0$, $\HD(\WA_\psi) = +\infty$. In fact, for any nondecreasing function $f:(0,+\infty)\to(0,+\infty)$, $\HH^f(\WA_\psi) = +\infty$.
\end{itemize}
\end{theorem}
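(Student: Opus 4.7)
The plan is to handle the two parts with different techniques: finite-dimensional results for (i), and a direct Cantor-set construction exploiting non-strong-discreteness for (ii).

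For part (i), I would reduce to Theorems \ref{theoremdirichlet} and \ref{theoremjarnik} by exhibiting, for each $d \in \N$, a $d$-dimensional sublattice of $\Lambda$. Since $X$ is infinite-dimensional and $\cl{\R\Lambda} = X$, $\Lambda$ must span an infinite-dimensional subspace, so it contains linearly independent elements $\pp_1,\ldots,\pp_d$; let $V = \mathrm{span}(\pp_1,\ldots,\pp_d)$ and $\Lambda' = \Z\pp_1 \oplus \cdots \oplus \Z\pp_d$, which is a lattice in $V$. The key observation is that $\Lambda' \subseteq \Lambda$ implies the $\Lambda$-height of any $\rr \in \Q\Lambda'$ is at most its $\Lambda'$-height; combined with $\psi_s$ being nonincreasing, this shows that every point of $V$ which is $\psi_s$-well-approximable in $(V,\Q\Lambda',H_\std)$ is also $\psi_s$-well-approximable in $(X,\Q\Lambda,H_\std)$. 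For $s \leq 1$, Dirichlet's theorem in $V$ produces, for every $\xx \in V$, a sequence $\rr_n \to \xx$ with $\|\xx - \rr_n\| \leq C\psi_{1+1/d}(H_\std(\rr_n))$; since $\psi_{1+1/d}/\psi_s \to 0$, every point of $V$ is $\psi_s$-well-approximable, and hence $\HD(\WA_{\psi_s}) \geq d$. For $s > 1$, once $d \geq 1/(s-1)$, Theorem \ref{theoremjarnik} in $V$ yields $\HD(\WA_{\psi_s}) \geq (d+1)/s$. In both cases, letting $d \to \infty$ gives the conclusion.

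For part (ii), the plan is to construct inside $\WA_\psi$ a Cantor set of arbitrarily large Hausdorff $f$-measure. Using non-strong-discreteness, fix $\pp_1, \pp_2, \ldots \in \Lambda \setminus \{\0\}$ all lying in some ball $B(\0, C)$; by discreteness, $\|\pp_i - \pp_j\| \geq \varepsilon_\Lambda$ for $i \neq j$. Choose sequences $1 = q_0 \mid q_1 \mid q_2 \mid \cdots$ tending to infinity and branching numbers $(k_n)_1^\infty$, and for each finite word $\omega = (i_1,\ldots,i_n)$ with $i_k \in \{1,\ldots,k_k\}$ set $\rr_\omega = \sum_{k=1}^n \pp_{i_k}/q_k$. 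Since $q_k \mid q_n$ for $k \leq n$, $H_\std(\rr_\omega) \leq q_n$, and for each infinite word the limit $\xx_\omega = \sum_k \pp_{i_k}/q_k$ exists (given sufficiently fast growth of $q_n$) and satisfies $\|\xx_\omega - \rr_{\omega|_n}\| \leq 2C/q_{n+1}$. Requiring $q_{n+1}\psi(q_n) \to \infty$ then forces $\xx_\omega \in \WA_\psi$; let $K$ be the resulting compact set.

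For the measure estimate, equip $K$ with the uniform probability measure $\mu$ assigning mass $(k_1 \cdots k_n)^{-1}$ to each level-$n$ cylinder, and apply the mass distribution principle. Sibling cylinders at level $n$ have centers separated by at least $\varepsilon_\Lambda/q_n$; once we impose $q_{n+1} \geq 16 C q_n/\varepsilon_\Lambda$, any two distinct level-$n$ cylinders stay at distance at least $\varepsilon_\Lambda/(2q_n)$ from each other, so a ball of radius $r \leq \varepsilon_\Lambda/(4q_n)$ meets at most one such cylinder and has $\mu$-mass at most $(k_1 \cdots k_n)^{-1}$. Given any $M \in \N$, choose $(k_n)$ large enough that $k_1 \cdots k_n \geq M/f(\varepsilon_\Lambda/(4q_{n+1}))$ for every $n$; then $\mu(B(\xx,r)) \leq f(r)/M$ on all relevant scales, and the mass distribution principle yields $\HH^f(K) \geq M$. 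Since $M$ is arbitrary and $K \subseteq \WA_\psi$, this proves $\HH^f(\WA_\psi) = +\infty$. The main obstacle will be the simultaneous juggling of three competing growth conditions on $(q_n)$ and $(k_n)$ — rapid enough growth of $q_n$ to enforce $K \subseteq \WA_\psi$, still faster relative growth $q_{n+1}/q_n$ to keep the Cantor cylinders geometrically separated, and unbounded branching $k_n$ to saturate $f$ — but since these are all qualitative conditions and we have complete freedom in both sequences together with infinitely many $\pp_i$ available, all three can be met simultaneously.
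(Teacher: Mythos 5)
Your proposal is correct, and it splits naturally into a part that matches the paper and a part that diverges. For (i) you follow essentially the paper's route: the paper also picks, for each $d$, a $d$-dimensional subspace whose intersection with $\Lambda$ is a lattice, invokes the finite-dimensional Jarn\'ik--Besicovitch theorem, and lets $d\to\infty$; your version is in fact a bit more careful, since you record the height comparison between the sublattice and $\Lambda$ and you treat separately the range where Theorem \ref{theoremjarnik} does not literally apply (the paper quotes the formula $(d+1)/s$ for all $s$, which is only accurate for $s\geq 1+1/d$, though the conclusion is unaffected). For (ii) you build exactly the same underlying set as the paper --- sums $\sum_k \pp_{i_k}/q_k$ with the $\pp_{i_k}$ drawn from the infinite set $S=\Lambda\cap B(\0,C)$, with $q_k\mid q_{k+1}$ growing fast enough both to force membership in $\WA_\psi$ and to keep distinct cylinders separated by $\asymp \varepsilon_\Lambda/q_n$ --- but you finish differently. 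The paper keeps the full infinite branching $S^{\N}$ at every level and shows that any cover by sets of diameter less than $\varepsilon_\Lambda/3$ must contain infinitely many sets of diameter bounded below (only finitely many covering sets live at each scale by assumption, and they can block only finitely many of the infinitely many well-separated children, so one dodges the cover inductively); this gives $\HH^f=+\infty$ simultaneously for every nondecreasing $f$ from a single set. You instead prune to finite branching $k_n\to\infty$ and apply the mass distribution principle, choosing $k_1\cdots k_n\geq M/f\bigl(\varepsilon_\Lambda/(4q_{n+1})\bigr)$ to get $\HH^f\geq M$ for each $M$. Both arguments are sound; your concern about circularity among the growth conditions is unfounded, since the conditions on $(q_n)$ do not involve $(k_n)$ and the $k_n$ can be chosen afterwards, using that $S$ is infinite. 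The trade-off is that your construction depends on $f$ and $M$, whereas the paper's single covering argument is $f$-uniform; on the other hand, yours is the more standard and more readily quantifiable technique.
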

\begin{remark}
In this theorem, the hypothesis that $\Lambda$ is not cobounded is not used; cf. Theorem \ref{theoremjarnikcobounded}.
\end{remark}
~
\begin{proof}[Proof of Theorem \ref{theoremjarniknoncobounded}]
~
\begin{itemize}
\item[(i)]
For each $d\in\N$, let $X_d\subset X$ be a subspace of dimension $d$ such that $X_d\cap\Lambda$ is a lattice in $X_d$. Then by Theorem \ref{theoremjarnik},
\[
\HD(\WA_{\psi_s}(X_d,\Q\Lambda\cap X_d,H_\std)) = \frac{d + 1}{s}\cdot
\]
But clearly $\WA_{\psi_s}(X,\Q\Lambda,H_\std)\supset\WA_{\psi_s}(X_d,\Q\Lambda\cap X_d,H_\std)$, whence
\[
\HD(\WA_{\psi_s}(X,\Q\Lambda,H_\std)) \geq \frac{d + 1}{s} \tendsto d +\infty.
\]
\item[(ii)]
Let $\psi\to 0$ be a nonincreasing function, and let $f:(0,+\infty)\to(0,+\infty)$ be a nondecreasing function. Let $\varepsilon_\Lambda = \min_{\pp\in\Lambda\butnot\{\0\}}\|\pp\| > 0$, and let $C_\Lambda > 0$ be large enough so that $\#(\Lambda\cap B(\0,C_\Lambda)) = +\infty$. Choose a sequence $(q_n)_1^\infty$ by induction as follows: Let $q_0 = 1$, and if $q_n$ has been chosen, let $q_{n + 1} \in q_n\N\butnot\{q_n\}$ be large enough so that $2C_\Lambda/q_{n + 1} \leq \min(\psi(q_n)/n,\varepsilon_\Lambda/(3q_n))$. Let $S = \Lambda\cap B(\0,C_\Lambda)$, and define $\pi:S^\N\to X$ by
\[
\pi((\pp_n)_1^\infty) = \sum_{n\in\N}\frac{\pp_n}{q_n}\cdot
\]
\begin{claim}
$\pi(S^\N)\subset\WA_\psi$.
\end{claim}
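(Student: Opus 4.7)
The plan is to exhibit, for each $\xx = \pi((\pp_n)_1^\infty)$, an explicit sequence of rational approximants $\rr_N := \sum_{n=1}^N \pp_n/q_n$ and show they converge to $\xx$ fast enough that, at every tolerance $\varepsilon > 0$, some $\rr_N$ violates the badly approximable condition for $\xx$. Everything reduces to two estimates on $\rr_N$: a height bound and a distance bound.

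For the height bound, since $q_{n+1}\in q_n\N$ for all $n$, induction gives $q_n\mid q_N$ whenever $n\leq N$; hence
\[
q_N\rr_N = \sum_{n=1}^N \frac{q_N}{q_n}\pp_n \in \Lambda,
\]
so $\rr_N\in\Q\Lambda$ with $H_\std(\rr_N)\leq q_N$.

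For the distance bound, I would first extract geometric growth of $(q_n)$ from the recursive definition. Since $B(\0,C_\Lambda)\cap\Lambda$ is infinite it contains at least one nonzero element, forcing $C_\Lambda\geq\varepsilon_\Lambda$; feeding this into $2C_\Lambda/q_{n+1}\leq\varepsilon_\Lambda/(3q_n)$ yields $q_{n+1}\geq 6q_n$. The tail $\xx - \rr_N = \sum_{n>N}\pp_n/q_n$ is then a geometric series, and the second half of the recursion together with the monotonicity of $\psi$ and the bound $H_\std(\rr_N)\leq q_N$ give
\[
\|\xx - \rr_N\| \leq \sum_{n>N}\frac{C_\Lambda}{q_n} \leq \frac{2C_\Lambda}{q_{N+1}} \leq \frac{\psi(q_N)}{N} \leq \frac{\psi(H_\std(\rr_N))}{N}.
\]

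To conclude, fix $\varepsilon>0$ and choose $N>1/\varepsilon$. If $\rr_N = \xx$, then $\xx\in\Q\Lambda$ and the BA condition already fails at the choice $\rr = \xx$; otherwise $0 < \dist(\rr_N,\xx) < \varepsilon\,\psi(H_\std(\rr_N))$, which also falsifies the BA condition. Either way $\xx\in\WA_\psi$. The only step that requires any care is extracting the geometric growth $q_{n+1}\geq 6q_n$ from the recursion via the comparison $C_\Lambda\geq\varepsilon_\Lambda$; apart from that, the argument is a direct telescoping calculation.
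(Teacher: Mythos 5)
Your proof is correct and follows essentially the same route as the paper: the same approximants $\rr_N=\sum_{n\leq N}\pp_n/q_n$, the same height bound via divisibility $q_n\mid q_N$, and the same tail estimate $\|\xx-\rr_N\|\leq 2C_\Lambda/q_{N+1}\leq\psi(q_N)/N$ combined with monotonicity of $\psi$. You merely supply two details the paper leaves implicit (the geometric growth $q_{n+1}\geq 6q_n$ justifying the factor $2$ in the tail bound, and the harmless $\rr_N=\xx$ case), both of which check out.
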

\begin{subproof}
Fix $(\pp_n)_1^\infty\in S^\N$, and let $\xx = \pi((\pp_n)_1^\infty) = \sum_{n\in\N}\pp_n/q_n$. Then for each $N\in\N$
\[
\left\|\xx - \sum_{n\leq N}\frac{\pp_n}{q_n}\right\| \leq \sum_{n > N}\frac{\|\pp_n\|}{q_n} \leq C_\Lambda\sum_{n > N}\frac{1}{q_n} \leq \frac{2C_\Lambda}{q_{N + 1}} \leq \psi(q_N)/N.
\]
On the other hand, since $q_1\divides q_2\divides \cdots \divides q_N$, we have
\[
H_\std\left(\sum_{n\leq N}\frac{\pp_n}{q_n}\right) \leq q_N,
\]
and so since $\psi$ is nonincreasing, we have
\[
\left\|\xx - \sum_{n\leq N}\frac{\pp_n}{q_n}\right\| \leq \frac{1}{n}\psi\circ H_\std\left(\sum_{n\leq N}\frac{\pp_n}{q_n}\right)
\]
and thus $\xx\in\WA_\psi$.
\end{subproof}
\ignore{
\begin{claim}
$\pi$ is bi-Lipschitz if $S^\N$ is given the metric
\[
\dist\left((\pp_n^{(1)})_1^\infty,(\pp_n^{(2)})_1^\infty\right) = 1/q_{\min\{n:\pp_n^{(1)}\neq\pp_n^{(2)}\}}.
\]
\end{claim}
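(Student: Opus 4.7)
The plan is a direct triangle-inequality argument that extracts the leading term in the series defining $\pi$ and uses the explicit growth conditions on $(q_n)_1^\infty$ to dominate the remaining tail. Fix two distinct sequences $(\pp_n^{(1)})_1^\infty$ and $(\pp_n^{(2)})_1^\infty$ in $S^\N$ and let $N = \min\{n:\pp_n^{(1)}\neq\pp_n^{(2)}\}$, so that their distance equals $1/q_N$. Decomposing
\[
\pi((\pp_n^{(1)})_1^\infty) - \pi((\pp_n^{(2)})_1^\infty) = \frac{\pp_N^{(1)} - \pp_N^{(2)}}{q_N} + \sum_{n > N}\frac{\pp_n^{(1)} - \pp_n^{(2)}}{q_n}
\]
reduces the bi-Lipschitz claim to a comparison of the norm of the leading term with the norm of the tail.

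For the Lipschitz (upper) bound I would use $S\subset B(\0,C_\Lambda)$, so each difference has norm at most $2C_\Lambda$, and the condition $q_{n+1}\in q_n\N\butnot\{q_n\}$, which forces $q_{n+1}\geq 2q_n$; summing the resulting geometric series with first term $2C_\Lambda/q_N$ yields $\|\pi((\pp_n^{(1)})_1^\infty) - \pi((\pp_n^{(2)})_1^\infty)\|\leq 4C_\Lambda/q_N$, i.e.\ an upper Lipschitz constant of $4C_\Lambda$. For the lower bound I would note that the leading term has norm at least $\varepsilon_\Lambda/q_N$ since $\pp_N^{(1)} - \pp_N^{(2)}$ is a nonzero element of $\Lambda$, and then invoke the inductive condition $2C_\Lambda/q_{n+1}\leq\varepsilon_\Lambda/(3q_n)$: this bounds each tail term $2C_\Lambda/q_n$ by $\varepsilon_\Lambda/(3q_{n-1})$, and summing the resulting geometric series gives a tail bound of $2\varepsilon_\Lambda/(3q_N)$. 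The reverse triangle inequality then produces
\[
\|\pi((\pp_n^{(1)})_1^\infty) - \pi((\pp_n^{(2)})_1^\infty)\|\geq \frac{\varepsilon_\Lambda}{q_N} - \frac{2\varepsilon_\Lambda}{3q_N} = \frac{\varepsilon_\Lambda}{3q_N},
\]
a lower Lipschitz constant of $\varepsilon_\Lambda/3$.

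The main obstacle is bookkeeping rather than ideas: one must confirm that the explicit constant $1/3$ in the inductive definition of $(q_n)$ really leaves a positive margin after subtracting the tail bound from the leading-term bound, not merely enough margin to make $\pi$ well-defined. Everything else is a routine triangle-inequality computation. Once the bi-Lipschitz claim is in hand, it promotes the already-established inclusion $\pi(S^\N)\subset\WA_\psi$ into an $\HH^f$-measure lower bound; and since $S$ is infinite (by the failure of strong discreteness), the ultrametric space $S^\N$ contains Cantor-type subsets of arbitrarily high branching multiplicity, so $\HH^f(S^\N) = +\infty$ for every nondecreasing $f$, completing the proof of Theorem~\ref{theoremjarniknoncobounded}(ii).
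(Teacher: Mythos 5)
Your argument is correct and is essentially the paper's own proof: the same decomposition into the leading term $(\pp_N^{(1)}-\pp_N^{(2)})/q_N$ (of norm at least $\varepsilon_\Lambda/q_N$ since the difference is a nonzero lattice vector) plus a tail controlled by the geometric growth $q_{n+1}\geq 2q_n$ and the inductive inequality $2C_\Lambda/q_{n+1}\leq\varepsilon_\Lambda/(3q_n)$, yielding the identical constants $\varepsilon_\Lambda/(3q_N)$ below and $4C_\Lambda/q_N$ above. The paper only writes out the lower bound and dismisses the upper bound as ``similar,'' so your explicit treatment of both directions is, if anything, slightly more complete.
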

\begin{proof}
Let $N = \min\{n:\pp_n^{(1)}\neq\pp_n^{(2)}\}$. Then
\begin{align*}
\left\|\pi((\pp_n^{(1)})_1^\infty) - \pi((\pp_n^{(1)})_1^\infty)\right\|
&= \left\|\sum_{n\geq N}\frac{\pp_n^{(1)} - \pp_n^{(2)}}{q_n}\right\|\\
&\geq \frac{\|\pp_N^{(1)} - \pp_N^{(2)}\|}{q_N} - \sum_{n > N} \frac{\|\pp_n^{(1)} - \pp_n^{(2)}\|}{q_n}\\
&\geq \frac{\varepsilon_\Lambda}{q_N} - \sum_{n > N} \frac{2C_\Lambda}{q_n}\\
&\geq \frac{\varepsilon_\Lambda}{q_N} - \frac{4C_\Lambda}{q_{N + 1}}\\
&\geq \frac{\varepsilon_\Lambda}{q_N} - \frac{2\varepsilon_\Lambda}{3q_N}\\
&= \frac{\varepsilon_\Lambda}{3q_N} \asymp_\times \frac{1}{q_N} = \dist\left((\pp_n^{(1)})_1^\infty,(\pp_n^{(2)})_1^\infty\right);
\end{align*}
the other direction is similar.
\end{proof}
}

\begin{claim}
If $\CC$ is any collection of subsets of $X$ of diameter less than $\varepsilon_\Lambda/3$ which covers $\pi(S^\N)$, then $\CC$ contains an infinite collection of sets whose diameters are bounded from below.
\end{claim}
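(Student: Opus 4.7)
The plan is to argue by contradiction: suppose that for every $\delta>0$ the subcollection $\{C\in\CC:\operatorname{diam}(C)\geq\delta\}$ is finite, and since the relevant application is to $\HH^f$ whose covers are countable by definition, take $\CC$ itself to be countable. The goal will then be to produce uncountably many distinct singletons in $\CC$, contradicting countability.

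The key structural input is the bi-Lipschitz estimate implicit in the commented-out calculation earlier in the proof: if $\ww,\ww'\in S^\N$ first differ at position $N$, then
\[
\frac{\varepsilon_\Lambda}{3q_N}\;\leq\;\|\pi(\ww)-\pi(\ww')\|\;\leq\;\frac{4C_\Lambda}{q_N}.
\]
For each $k\geq 1$ and $\vv\in S^k$ write $V_\vv=\{\ww\in S^\N:\ww_i=\vv_i\text{ for }i\leq k\}$ and $B_\vv=\pi(V_\vv)$. The left inequality shows that any $A\subset\pi(S^\N)$ with $\operatorname{diam}(A)<\varepsilon_\Lambda/(3q_N)$ lies in a unique $B_\vv$ with $|\vv|=N$, while the right inequality shows $\operatorname{diam}(B_\vv)\leq 4C_\Lambda/q_{|\vv|+1}$.

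Next, stratify $\CC$ by depth. After replacing each $C$ by $C\cap\pi(S^\N)$ (which only decreases diameters) assume $C\subset\pi(S^\N)$, and set
\[
N(C)=\sup\{N\in\N:\,C\subset B_\vv\text{ for some }\vv\in S^N\}\in\N\cup\{\infty\},
\]
writing $\vv(C)$ for the witnessing tuple when $N(C)$ is finite. The two estimates force
\[
\frac{\varepsilon_\Lambda}{3q_{N(C)+1}}\;\leq\;\operatorname{diam}(C)\;\leq\;\frac{4C_\Lambda}{q_{N(C)+1}}
\]
whenever $N(C)<\infty$, and $N(C)=\infty$ only when $C$ is a singleton. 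Thus under the contradiction hypothesis every layer $\mathcal{L}_N:=\{C\in\CC:N(C)=N\}$ is finite, since its elements have diameter at least $\varepsilon_\Lambda/(3q_{N+1})>0$.

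The final step is a K\"onig-lemma style pruning. Build $(\pp_n)\in S^\N$ recursively: with $\pp_1,\ldots,\pp_{N-1}$ chosen, the set
\[
F_N=\{p\in S:\text{some }C\in\mathcal{L}_N\text{ has }\vv(C)=(\pp_1,\ldots,\pp_{N-1},p)\}
\]
is finite, so since $S$ is infinite we may pick $\pp_N\in S\setminus F_N$ with at least two admissible choices at each stage, producing $2^{\aleph_0}$ sequences. For any such $(\pp_n)$ and any $C\in\CC$ containing $\pi((\pp_n))$: if $N(C)=N_0<\infty$ then $C\subset B_{\vv(C)}$ forces $\vv(C)=(\pp_1,\ldots,\pp_{N_0})$, contradicting $\pp_{N_0}\notin F_{N_0}$. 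Therefore $\{\pi((\pp_n))\}\in\CC$, and the injectivity of $\pi$ supplies uncountably many distinct singletons in $\CC$, contradicting countability. The main obstacle is the initial structural observation that ``diameters bounded from below'' is equivalent to some layer $\mathcal{L}_N$ being infinite; once this is identified, the pruning is forced by $|S|=\infty$.
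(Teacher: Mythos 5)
Your argument is correct and shares its engine with the paper's: both stratify $\CC$ into layers indexed by scale (yours via the depth function $N(C)$ and the cylinder sets $B_\vv$, the paper's directly by diameter, with $\CC_n$ the sets of diameter in $[\varepsilon_\Lambda/(3q_{n+1}),\varepsilon_\Lambda/(3q_n))$), observe that the contradiction hypothesis makes each layer finite, and then recursively choose $\pp_N\in S$ avoiding the finitely many elements of $S$ blocked by the $N$-th layer, which is possible since $S$ is infinite. The divergence is in the endgame. The paper produces a single point $\pi((\pp_n)_1^\infty)$ lying in no $\bigcup(\CC_N)$ and calls that a contradiction with covering; this tacitly assumes every member of $\CC$ has positive diameter, since a diameter-zero set lies in no layer and could still cover the constructed point. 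You instead produce continuum many such points, note that each must then be covered by a singleton of $\CC$, and contradict countability. This is the cleaner treatment of diameter-zero sets, but it costs you the hypothesis that $\CC$ is countable, which is not in the statement. That restriction is in fact forced: the cover of $\pi(S^\N)$ by all of its singletons satisfies the hypotheses of the claim as written and violates its conclusion, so the claim is only true for countable covers (or for covers by sets of positive diameter) --- and countable covers are all that the application to $\HH^f$ requires. In short, your proof and the paper's are complementary patches of the same small hole, wrapped around the same combinatorial core; your version of that core (the two-sided separation estimates for $\pi$ and the resulting unique-cylinder containment for small sets) is the same geometry the paper encodes via the disjoint balls $C_{N,\pp}$.
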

\begin{subproof}
By contradiction, suppose not; then for each $n$, the set
\[
\CC_n := \{A\in\CC: \varepsilon_\Lambda/(3q_{n + 1}) \leq \diam(A) < \varepsilon_\Lambda/(3q_n)\}
\]
is finite. We now choose a sequence $(\pp_n)_1^\infty$ in $S^\N$ by induction. If $\pp_1,\ldots,\pp_{N - 1}$ have been chosen, then for each $\pp\in S$ let
\[
C_{N,\pp} = \sum_{n < N}\frac{\pp_n}{q_n} + \frac{\pp}{q_N} + B(\0,\varepsilon_\Lambda/(3q_N)).
\]
The sets $(C_{N,\pp})_{\pp\in S}$ are disjoint; in fact, the distance between $C_{N,\pp}$ and $C_{N,\w\pp}$ for $\pp\neq\w\pp$ is always at least $\varepsilon_\Lambda/(3q_N)$. Thus each $A\in\CC_N$ can intersect at most one of the sets $C_{N,\pp}$, so since $\#(S) = +\infty$ there exists $\pp_N\in S$ such that $C_{N,\pp_N}$ is disjoint from $\bigcup(\CC_N)$. This completes the inductive step.

Calculation (based on the inequality $2C_\Lambda/q_{N + 1} \leq \varepsilon_\Lambda/(3q_N)$) shows that the point $\xx = \pi((\pp_n)_1^\infty)$ is in each of the sets $C_{N,\pp_N}$, and so it is not in any of the sets $\bigcup(\CC_N)$. This contradicts that $\CC$ covers $\pi(S^\N)$.
\end{subproof}
Now if $f:(0,+\infty)\to(0,+\infty)$ is nondecreasing, then the equation $\HH^f(\pi(S^\N)) = +\infty$ is evident from the claim. Finally, setting $f(t) = t^s$ with $s$ arbitrary shows that $\HD(\WA_\psi) = +\infty$.
\end{itemize}
\end{proof}

\draftnewpage
\section{Infinite-dimensional cobounded case}
\label{sectioncobounded}
In this section, we assume that $\Lambda$ is a cobounded lattice in an infinite-dimensional Banach space $X$, and we consider the Diophantine space $(X,\Q\Lambda,H_\std)$. We begin by proving the following:

\begin{theorem}[Khinchin-type theorem, Jarn\'ik--Schmidt type theorem]
\label{theoremkhinchincobounded}
The set $\BA_{\psi_1}$ is prevalent. In particular, $\HD(\BA_{\psi_1}) = +\infty$.
\end{theorem}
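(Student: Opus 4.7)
The plan is to construct a compactly supported Borel probability measure $\mu$ on $X$ that is transverse to $\WA_{\psi_1}$; together with Proposition \ref{propositionHDinfinity}, this gives both conclusions of the theorem. Using the reformulation $\xx \in \BA_{\psi_1} \iff \inf_{q \in \N}\dist(q\xx,\Lambda) > 0$, the task reduces to showing that for every fixed $\vv \in X$, $\mu$-a.e.\ $\xx$ satisfies $\inf_q \dist(q(\xx-\vv),\Lambda) > 0$; by the easy direction of Borel--Cantelli, it suffices to exhibit some $\varepsilon_0 > 0$ with $\sum_{q=1}^\infty \mu\bigl\{\xx : \dist(q(\xx-\vv),\Lambda) < \varepsilon_0\bigr\} < +\infty$ for every $\vv \in X$.

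Following the scheme of Theorem \ref{theoremkhinchinnoncobounded}, I would build $\mu$ as an infinite convolution $\mu = \Sigma_*\bigl(\prod_{n=1}^\infty \mu_n\bigr)$, where $\Sigma((\xx_n)) = \sum_n \xx_n$ and each $\mu_n$ is a discrete uniform measure $\mu_n = \frac{1}{k_n}\sum_{i=0}^{k_n-1}\delta_{i\vv_n}$ on an arithmetic progression with step $\vv_n \in X$ of norm $\|\vv_n\| = \radius_n$ decaying geometrically (so that $\sum_n k_n\radius_n < +\infty$ and $\mu$ is compactly supported). The crucial obstruction relative to the non-cobounded case is that Lemma \ref{lemmaw} is unavailable---every vector of $X$ lies within $\codiam(\Lambda)<+\infty$ of $\Lambda$, so we cannot force $N_n!\vv_n$ to be far from $\Lambda$. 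The argument must therefore control $\dist(q\xx,\Lambda)$ at a fixed coarse scale $\varepsilon_0$ (uniform in $q$), rather than at a shrinking scale $\radius_n/4$ relative to an auxiliary lattice $\Lambda/N_n!$.

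The central estimate, replacing Claim \ref{claimatmostone}, is a probabilistic spread-out bound: for an appropriate choice of $\vv_n$ and $k_n$, for each $n$, each $q$ in the dyadic ``active'' range where $q\radius_n \gtrsim \varepsilon_0$, and each $\xx \in X$, $\mu_n\bigl\{\xx_n : \dist(q(\xx + \xx_n),\Lambda) < \varepsilon_0\bigr\} \leq C\varepsilon_0$ for a universal constant $C$. Combined with Fubini over the remaining coordinates $\xx_m$ ($m \neq n$) and multiplied across the $\Theta(\log_2 q)$ active scales, this yields $\mu\bigl\{\xx : \dist(q(\xx-\vv),\Lambda) < \varepsilon_0\bigr\} \leq (C\varepsilon_0)^{\Theta(\log_2 q)}$, which exhibits polynomial decay in $q$ and is summable for all sufficiently small $\varepsilon_0$. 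Borel--Cantelli then yields $\dist(q(\xx-\vv),\Lambda) \geq \varepsilon_0$ for all but finitely many $q$, $\mu$-almost surely; the finite exceptional set is handled by noting that $\mu$ is non-atomic and the countable set $\vv + \Q\Lambda$ is $\mu$-null, so the infimum is strictly positive almost surely.

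The main obstacle I anticipate is the abstract construction of the direction vectors $\vv_n$ achieving the spread-out estimate. In the concrete Examples \ref{examplec0} and \ref{examplelinfty}, standard basis vectors $\ee_n$ (or scaled versions $\vv_n = \radius_n\ee_n$) suffice, with the per-scale estimate reducing to the elementary one-dimensional fact that for $t$ uniform on $[0,1]$, the set $\{t : \dist(qt/2^n,\Z) < \varepsilon_0\}$ has Lebesgue measure $O(\varepsilon_0)$ whenever $q/2^n \geq 1$. For a general cobounded lattice $\Lambda$ in an abstract Banach space $X$, suitable $\vv_n$ must be produced by an inductive selection procedure that exploits the infinite-dimensionality of $X$ to find new directions ``sufficiently generic'' with respect to $\Lambda$ and to the previously chosen $\vv_1,\dots,\vv_{n-1}$; the positivity of $\codiam(\Lambda)$ together with discreteness of $\Lambda$ then secures the uniform spread-out property required above.
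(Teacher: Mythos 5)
Your high-level strategy --- a compactly supported infinite-convolution measure $\mu = \Sigma[\prod_n\mu_n]$ transverse to $\WA_{\psi_1}$, followed by Borel--Cantelli and Proposition \ref{propositionHDinfinity} --- is exactly the paper's, and your reduction of $\BA_{\psi_1}$ to $\inf_q\dist(q(\xx-\vv),\Lambda)>0$ is correct. The gap is in the central estimate, and it is the one you flag yourself: you never construct the supports of the $\mu_n$, and the structure you do propose does not work in general. First, taking $\mu_n$ uniform on an arithmetic progression $\{i\vv_n\}_{0\leq i<k_n}$ along a single direction is the non-cobounded design; there the progression is forced away from $\Lambda$ by Lemma \ref{lemmaw}, which is precisely what coboundedness destroys, and for a general cobounded $\Lambda$ there is no reason a single-direction progression should satisfy $\mu_n\{\xx_n:\dist(q(\xx+\xx_n),\Lambda)<\varepsilon_0\}\leq C\varepsilon_0$ (if $q\vv_n$ happens to lie in or near $\Lambda$, every index $i$ is bad). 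Second, even granting such a per-scale bound, your multiplication across the $\Theta(\log_2 q)$ active scales presupposes that the event $\dist(q(\xx-\vv),\Lambda)<\varepsilon_0$ decomposes as an intersection of events depending on separate coordinates $\xx_n$. That factorization is genuine for $\Z^\infty\subset c_0(\N)$ and $\Z^\N\subset\ell^\infty(\N)$, where the sup-norm distance to the product lattice is a supremum of independent one-dimensional distances (which is why your concrete examples go through), but for an abstract cobounded lattice the distance to $\Lambda$ does not split over the scales, and conditioning on the remaining coordinates via Fubini yields only the single-scale bound $C\varepsilon_0$, which is not summable in $q$.

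The paper closes the gap by changing the geometry of the supports, which removes both problems at once. A quotient-space (Riesz-lemma) argument, using only $\dim X=\infty$, produces unit vectors $(\ee_i)_1^\infty$ with $\|\ee_i-\ee_j\|\geq 1$ for $i\neq j$, and $\mu_n$ is taken uniform on the $\lambda^{2n}$ points $\vv_{i,n}=\varepsilon_\Lambda\ee_i/(4\lambda^n)$ --- many mutually separated directions at each scale, rather than a progression along one direction. Discreteness of $\Lambda$ alone then gives the counting claim: for each fixed denominator $q\leq\lambda^n$, at most one of the balls $B(\xx+\vv_{i,n},\varepsilon_\Lambda/(16\lambda^n))$ can meet $\Lambda/q$, since two of them would force a nonzero lattice vector of norm strictly less than $\varepsilon_\Lambda$. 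Summing over $q\leq\lambda^n$ bounds the single-scale probability by $\lambda^{n}/\lambda^{2n}=\lambda^{-n}$, which is already summable over $n$, so no product over scales is needed; Borel--Cantelli is applied over the scale index $n$ (each scale handling all $q\leq\lambda^n$ at once) rather than over $q$, and the conclusion follows as in the non-cobounded case. Replacing your arithmetic progression by such a large separated family of directions is the missing construction.
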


The proof will follow the same lines as the proof of Theorem \ref{theoremkhinchinnoncobounded}, but with some modifications.

\ignore{

\begin{theorem}
Then the Dirichlet function $\psi(q) = 1/q$ is optimal; indeed, $\BA_\psi$ is hyperplane winning.
\end{theorem}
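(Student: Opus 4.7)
The plan is to adapt the strategy of Theorem \ref{theoremkhinchinnoncobounded}: construct a compactly supported probability measure $\mu$ on $X$ transverse to $\WA_{\psi_1}$ and conclude by Borel--Cantelli over $q$. The simplifying feature is that only the single function $\psi_1(q)=1/q$ need be beaten; the complicating feature is that Lemma \ref{lemmaw} is unavailable, since every $\ww\in X$ has $\dist(\ww,\Lambda)\le D:=\codiam(\Lambda)$. Consequently a single perturbation per ``stage'' halves the bad probability instead of dividing by $2^n$, so we compensate with infinitely many independent perturbations, mimicking the coordinate-wise independence that makes the model case $(\ell^\infty(\N),\Z^\N)$ nearly trivial: there $\mu=\bigotimes_n\text{Unif}[0,\rho_n]$ yields $\dist(q\xx,\Lambda)=\sup_n\|qx_n\|_{\R/\Z}$, and independence plus uniformity give $\mu(\{\dist(q(\xx-\vv),\Lambda)<\varepsilon\})\le(2\varepsilon)^{N(q)}$ with $N(q):=\#\{n:q\rho_n\ge 1\}$.

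To import this structure into a general cobounded $(X,\Lambda)$, the key geometric input is the existence of sequences $\chi_n\in X^*$ and $\ww_n\in X$ satisfying $\|\chi_n\|\le 1$, $\chi_n(\Lambda)\subset\Z$, $\chi_n(\ww_n)=\tfrac12$, $\chi_m(\ww_n)=0$ for $m\ne n$, and $\sup_n\|\ww_n\|<\infty$. In the coordinate example these are the coordinate projections and $\ww_n=\tfrac12\ee_n$; in general, one builds them inductively. At step $n+1$, pick $\chi_{n+1}\in X^*$ nonzero, integer-valued on $\Lambda$, and vanishing on $\operatorname{span}(\ww_1,\ldots,\ww_n)$---this exists because the Pontryagin dual of the infinite-dimensional compact abelian group $X/\Lambda$ is infinite, so imposing finitely many linear constraints leaves nonzero elements---then pick $\ww_{n+1}\in\bigcap_{m\le n}\ker\chi_m$ with $\chi_{n+1}(\ww_{n+1})=\tfrac12$ and bounded norm.

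Given the lemma, let $(\rho_n)$ be summable (e.g.\ $\rho_n=1/n^2$) and let $\mu$ be the pushforward of $\bigotimes_n\text{Unif}[0,\rho_n]$ under $(t_n)\mapsto\sum_n t_n\ww_n$; this is a compactly supported probability measure on $X$ since $\sum\rho_n\|\ww_n\|<\infty$. The elementary inequality $\dist(\yy,\Lambda)\ge\|\chi(\yy)\|_{\R/\Z}$ (valid whenever $\|\chi\|\le 1$ and $\chi(\Lambda)\subset\Z$), combined with the biorthogonality $\chi_n\bigl(\sum_m t_m\ww_m\bigr)=t_n/2$, reduces the $\mu$-estimate for $\{\dist(q(\xx-\vv),\Lambda)<\varepsilon\}$ to a product of estimates for $\|qt_n/2-q\chi_n(\vv)\|_{\R/\Z}<\varepsilon$, each of which is at most $2\varepsilon$ whenever $q\rho_n\ge 2$. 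Hence $\mu(\{\dist(q(\xx-\vv),\Lambda)<\varepsilon\})\le(2\varepsilon)^{N(q)}$ with $N(q)\gtrsim\sqrt q$ for $\rho_n=1/n^2$, so $\sum_q(2\varepsilon)^{N(q)}<\infty$ for $\varepsilon<\tfrac12$. Borel--Cantelli then gives $\inf_q\dist(q(\xx-\vv),\Lambda)>0$ $\mu$-a.s., i.e.\ $\xx-\vv\in\BA_{\psi_1}$; so $\mu$ is transverse to $\WA_{\psi_1}$, $\BA_{\psi_1}$ is prevalent, and $\HD(\BA_{\psi_1})=+\infty$ by Proposition \ref{propositionHDinfinity}.

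The main obstacle is the existence lemma for the biorthogonal system $(\chi_n,\ww_n)$: in particular, showing that each inductive step admits a choice of $\ww_{n+1}$ of controlled norm (which uses the quotient-norm estimate for $\chi_{n+1}$ on the closed subspace $\bigcap_{m\le n}\ker\chi_m$) and that the Pontryagin-dual extraction of each $\chi_{n+1}$ can always be arranged; both points rest on infinite-dimensionality of $X$. Once the lemma is in place, the rest of the argument is a routine transfer of the coordinate-case computation.
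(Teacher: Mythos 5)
There are two genuine gaps here. First, the statement asserts that $\BA_{\psi_1}$ is \emph{hyperplane winning}, and the paper's (suppressed) argument for this is a generalization of the finite-dimensional Schmidt-game proof via an infinite-dimensional Simplex Lemma: a ball of radius $\varepsilon/Q$ contains at most $Q$ rationals of height $\leq Q$, so at each stage of the hyperplane game the dangerous rationals can be absorbed into a hyperplane neighborhood. Your argument instead establishes that $\BA_{\psi_1}$ is \emph{prevalent}. Prevalence and winning are incomparable genericity notions; neither implies the other, so even a complete version of your argument would prove a different theorem (it would still yield optimality of $\psi_1$, since either property gives $\BA_{\psi_1}\neq\emptyset$ and hence optimality via Theorem \ref{theoremBAimpliesoptimality}, but the ``hyperplane winning'' clause would remain unproved).

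Second, the key lemma on which your prevalence argument rests --- the existence of infinitely many functionals $\chi_n\in X^*$ with $\|\chi_n\|\leq 1$, $\chi_n(\Lambda)\subset\Z$, and a bounded biorthogonal system $(\ww_n)$ --- is not established, and the justification offered is based on a false premise. The quotient $X/\Lambda$ is \emph{never} compact when $X$ is infinite-dimensional and $\Lambda$ is discrete: the quotient map is an isometry on $B(\0,\varepsilon_\Lambda/3)$, and that ball is not totally bounded by Riesz's lemma. (Coboundedness only says $X/\Lambda$ has finite diameter.) So the appeal to Pontryagin duality for a ``compact abelian group $X/\Lambda$'' does not apply, and it is not clear that an arbitrary cobounded lattice admits even one nonzero $\chi\in X^*$ with $\chi(\Lambda)\subset\Z$, let alone a sequence of them of infinite rank (note also that the dual lattice $\{\chi:\chi(\Lambda)\subset\Z\}$ is only a group, so being infinite would not by itself survive the finitely many linear constraints $\chi(\ww_1)=\dots=\chi(\ww_n)=0$ you impose at each inductive step). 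It is telling that the paper's actual prevalence proof in the cobounded case (Theorem \ref{theoremkhinchincobounded}) avoids dual functionals entirely: it uses only a $1$-separated sequence of unit vectors $(\ee_i)$, the discreteness constant $\varepsilon_\Lambda$, and the counting estimate of Claim \ref{claimq2n} to replace the independence you would get from a biorthogonal system. Your probabilistic computation (the product bound, Borel--Cantelli over $q$, and the conclusion $\inf_q\dist(q(\xx-\vv),\Lambda)>0$) is fine conditional on the lemma, but the lemma itself is the heart of the matter and is unsupported.
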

The proof is a straightforward generalization of the finite-dimensional proof, based on the
\begin{lemma}[Simplex Lemma for Infinite-Dimensional Banach Spaces]
\label{lemmabanachsimplex}
There exists $\varepsilon > 0$ so that for all $Q\in\N$ and for every ball $B = B(\xx,\varepsilon/Q)\subset X$, the set of rational points $\rr\in B(\xx,\varepsilon/Q)$ satisfying $H_\std(\rr)\leq Q$ has cardinality at most $Q$, and in particular is contained in an affine subspace of $X$ of codimension one.
\end{lemma}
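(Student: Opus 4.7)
The plan is to prove the cardinality bound directly via the discreteness of $\Lambda$, and then derive the codimension-one statement as an immediate consequence of finite-dimensionality through Hahn--Banach. Conceptually this argument is simpler than the classical finite-dimensional simplex lemma, because in place of a volume/determinant comparison we may simply partition the rational points by their height and count them one height at a time.

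Concretely, I would set $\varepsilon := \varepsilon_\Lambda/3$, where $\varepsilon_\Lambda>0$ is the discreteness constant of $\Lambda$. Fix $\xx\in X$ and $Q\in\N$, and let $S$ denote the set of $\rr\in\Q\Lambda\cap B(\xx,\varepsilon/Q)$ with $H_\std(\rr)\leq Q$. The key step is to show that, for each $q\in\{1,\ldots,Q\}$, the set $S$ contains at most one rational with $H_\std(\rr)=q$: given two such $\rr_1,\rr_2$, the element $q(\rr_1-\rr_2)=q\rr_1-q\rr_2$ lies in $\Lambda$ and satisfies
\[
\|q(\rr_1-\rr_2)\|\;\leq\; q\cdot\tfrac{2\varepsilon}{Q}\;\leq\;2\varepsilon\;<\;\varepsilon_\Lambda,
\]
so it must equal $\0$, forcing $\rr_1=\rr_2$. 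Summing over $q=1,\ldots,Q$ yields $|S|\leq Q$.

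For the codimension-one statement, the finiteness of $S$ guarantees that its affine hull $A$ is a finite-dimensional affine subset of $X$. Translating by some $\rr_0\in A$ yields a finite-dimensional, hence norm-closed, proper subspace $V\subset X$, and Hahn--Banach produces a nonzero continuous linear functional $f\in X^*$ that vanishes on $V$. Then $S\subseteq A\subseteq \rr_0+\ker(f)$, which is a codimension-one affine subspace of $X$.

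There is no substantial obstacle to this plan. The only subtlety is calibrating $\varepsilon$ so that the inequality $2\varepsilon q/Q<\varepsilon_\Lambda$ holds uniformly for all $q\leq Q$, which holds for any $\varepsilon<\varepsilon_\Lambda/2$. It is worth noting that coboundedness of $\Lambda$, the standing hypothesis of this section, is not actually used in the argument; the lemma depends only on the discreteness of $\Lambda$ together with the fact that $X$ is infinite-dimensional.
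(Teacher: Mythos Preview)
Your argument is correct and follows essentially the same route as the paper: for each $q\leq Q$, use discreteness of $\Lambda$ to see that at most one point of $\Lambda/q$ can lie in the ball, then sum over $q$. The paper's version counts $\#(B\cap\Lambda/q)$ rather than partitioning by exact height and omits the explicit Hahn--Banach step for the codimension-one conclusion, which you correctly supply.
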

\begin{proof}
Let $\varepsilon > 0$ be small enough so that $B(\0,\varepsilon)\subset\Delta$ (this is possible since $\Lambda$ is discrete). For each $q\leq Q$, we have
\begin{align*}
\#\left(B\cap \frac{\Lambda}{q}\right)
&= \#(B(q\xx,\varepsilon q/Q)\cap\Lambda)\\
&\leq \#(B(q\xx,\varepsilon)\cap \Lambda \leq 1.
\end{align*}
Thus the set of rational points $\rr\in B(\xx,\varepsilon/Q)$ satisfying $H_\std(\rr)\leq Q$ has cardinality at most $Q$.
\end{proof}

}
\begin{proof}[Proof of Theorem \ref{theoremkhinchincobounded}]
\begin{lemma}
There exists a sequence of unit vectors $(\ee_i)_1^\infty$ satisfying
\begin{equation}
\label{separated}
\|\ee_j - \ee_i\|\geq 1 \text{ whenever } i\neq j.
\end{equation}
\end{lemma}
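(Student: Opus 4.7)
The plan is to construct the sequence inductively, at each stage invoking the following sharpening of F.~Riesz's lemma: \emph{if $Y$ is a finite-dimensional proper closed subspace of a Banach space $X$, then there exists a unit vector $\ee\in X$ with $\dist(\ee,Y)\geq 1$}. Granting this, I begin with an arbitrary unit vector $\ee_1\in X$. Given $\ee_1,\ldots,\ee_n$, let $Y_n = \R\ee_1 + \cdots + \R\ee_n$; this is a finite-dimensional (hence closed) proper subspace of the infinite-dimensional space $X$. Apply the sharpening to produce a unit vector $\ee_{n+1}$ with $\dist(\ee_{n+1},Y_n)\geq 1$, which yields $\|\ee_{n+1}-\ee_i\|\geq 1$ for every $i\leq n$, as required.

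To establish the sharpening, I would pick $\xx\in X\butnot Y$. Since $Y$ is finite-dimensional, the continuous function $\yy\mapsto\|\xx-\yy\|$ attains its infimum on $Y$, say at some $\yy_0\in Y$, so that $\|\xx-\yy_0\| = \dist(\xx,Y)$. Set $\ee = (\xx-\yy_0)/\|\xx-\yy_0\|$. For any $\yy\in Y$, the vector $\yy_0 + \|\xx-\yy_0\|\yy$ still lies in $Y$, so
\[
\|\ee-\yy\| \;=\; \frac{\bigl\|\xx - \bigl(\yy_0 + \|\xx-\yy_0\|\yy\bigr)\bigr\|}{\|\xx-\yy_0\|} \;\geq\; \frac{\dist(\xx,Y)}{\|\xx-\yy_0\|} \;=\; 1,
\]
proving the sharpened statement.

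The subtlety — and the reason the conclusion achieves separation $\geq 1$ rather than the usual $<1$ provided by Riesz's lemma in a general Banach space — is that one needs the infimum defining $\dist(\xx,Y)$ to actually be attained. This is the one step where finite-dimensionality of $Y$ is indispensable, since it gives compactness of closed bounded subsets of $Y$. The inductive scheme is carefully arranged so that at every stage we are separating only from the span of finitely many vectors, keeping this property available throughout.
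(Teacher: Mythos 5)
Your proposal is correct and is essentially the paper's argument: the paper also inducts, separating each new unit vector from the finite-dimensional span $V$ of its predecessors, phrasing the key step as lifting a unit vector of the quotient $X/V$ to a unit-norm representative, which is precisely your exact (constant $1$) Riesz lemma and likewise rests on the distance to a finite-dimensional subspace being attained. Your write-up simply makes that attainment step explicit where the paper leaves it implicit.
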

\begin{subproof}
We construct the sequence $(\ee_i)_1^\infty$ by induction. Suppose that $(\ee_i)_1^{n - 1}$ have been defined, and let $V = \sum_{i = 1}^{n - 1} \R\ee_i$. Let $\ww$ be a unit vector in $X/V$, and let $\ee_n\in X$ be a unit vector representing $\ww$. Then for all $i < n$,
\[
\|\ee_n - \ee_i\| \geq \dist(\ee_n,V) = \|\ww\| = 1.
\]
This demonstrates \eqref{separated}.
\end{subproof}
Let $\varepsilon_\Lambda = \min_{\pp\in\Lambda}\|\pp\| > 0$, let $\lambda = 16$, and for each $n\in\N$ and $i = 1,\ldots,\lambda^{2n}$ let
\[
\vv_{i,n} = \frac{\varepsilon_\Lambda\ee_i}{4\lambda^n}\cdot
\]
\begin{claim}
\label{claimq2n}
For any point $\xx\in X$,
\begin{equation}
\label{q2n}
\#\left\{i = 1,\ldots,\lambda^{2n}: B\left(\xx + \vv_{i,n}, \frac{\varepsilon_\Lambda}{16\lambda^n}\right)\cap \left(\bigcup_{q = 1}^{\lambda^n}\frac{\Lambda}{q}\right) \neq \emptyset\right\} \leq \lambda^n.
\end{equation}
\end{claim}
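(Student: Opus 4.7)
The plan is to fix a denominator $q \in \{1,\ldots,\lambda^n\}$ and argue that at most one index $i$ can satisfy $B_i \cap (\Lambda/q) \neq \emptyset$, where I write $B_i := B(\xx + \vv_{i,n},\varepsilon_\Lambda/(16\lambda^n))$; summing over the $\lambda^n$ admissible values of $q$ then yields the claimed bound. The two ingredients the argument will trade off against one another are the lattice separation $\|\pp - \pp'\| \geq \varepsilon_\Lambda$ for distinct $\pp,\pp' \in \Lambda$, and the separation $\|\vv_{i,n} - \vv_{j,n}\| \geq \varepsilon_\Lambda/(4\lambda^n)$ (which follows immediately from $\|\ee_i - \ee_j\| \geq 1$ and the definition of $\vv_{i,n}$).

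So fix $q$ and suppose $i \neq j$ both work, witnessed by $\pp_i/q \in B_i$ and $\pp_j/q \in B_j$. First I would bound $\|\pp_i/q - \pp_j/q\|$ above by the triangle inequality: since $\|\vv_{i,n}\| = \|\vv_{j,n}\| = \varepsilon_\Lambda/(4\lambda^n)$, this gives at most $\varepsilon_\Lambda/(2\lambda^n) + 2\cdot \varepsilon_\Lambda/(16\lambda^n) = 5\varepsilon_\Lambda/(8\lambda^n)$. Were $\pp_i \neq \pp_j$, the lattice separation would force $\|\pp_i/q - \pp_j/q\| \geq \varepsilon_\Lambda/q \geq \varepsilon_\Lambda/\lambda^n$, and $5/8 < 1$ yields a contradiction. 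Hence $\pp_i = \pp_j =: \pp$, but then both $\vv_{i,n}$ and $\vv_{j,n}$ sit within distance $\varepsilon_\Lambda/(16\lambda^n)$ of $\pp/q - \xx$, hence within $\varepsilon_\Lambda/(8\lambda^n)$ of each other, contradicting the $\varepsilon_\Lambda/(4\lambda^n)$ separation of the $\vv_{i,n}$.

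Combining these two contradictions, for each $q$ there is at most one index $i \in \{1,\ldots,\lambda^{2n}\}$ satisfying $B_i \cap (\Lambda/q)\neq\emptyset$, and since any $i$ counted by \eqref{q2n} witnesses at least one such $q$, the total count is bounded by the number of available $q$, namely $\lambda^n$.

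There is no serious obstacle here; the entire content of the claim is the constant-chasing needed to ensure both the ``distinct rationals are impossible'' step and the ``coincident rationals force coincident $\vv_{i,n}$'' step are strict. The choice $\lambda = 16$ and the two scales $\varepsilon_\Lambda/(4\lambda^n)$ (for $\|\vv_{i,n}\|$) and $\varepsilon_\Lambda/(16\lambda^n)$ (for the ball radius) were evidently tuned exactly to make both inequalities go through with room to spare, and the only thing worth double-checking is that the pair of strict inequalities $5/8 < 1$ and $1/8 < 1/4$ are indeed what makes the bookkeeping balance.
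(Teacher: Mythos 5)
Your proof is correct and follows essentially the same route as the paper's: fix $q$, show at most one index $i$ can have $B_i\cap(\Lambda/q)\neq\emptyset$ by playing the lattice separation $\varepsilon_\Lambda$ off against the separation $\varepsilon_\Lambda/(4\lambda^n)$ of the $\vv_{i,n}$, then sum over the $\lambda^n$ values of $q$. The only cosmetic difference is that you case-split on $\pp_i=\pp_j$ versus $\pp_i\neq\pp_j$, whereas the paper sandwiches $\tfrac1q\|\pp_1-\pp_2\|$ between $\varepsilon_\Lambda/(8\lambda^n)$ and $5\varepsilon_\Lambda/(8\lambda^n)$ and lets the lower bound rule out $\pp_1=\pp_2$ — the same constants and the same contradiction, just reorganized.
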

\begin{subproof}
\ignore{
\begin{align*}
&\mu_n\left(\bigcup_{q = 1}^{\lambda^n}B\left(\xx + \frac{\Lambda}{q},\frac{\varepsilon_\Lambda}{16\lambda^n}\right)\right)\\
&= \frac{1}{\lambda^{2n}}\#\left\{i = 1,\ldots,\lambda^{2n}: \vv_{i,n} \in \bigcup_{q = 1}^{\lambda^n} B\left(\xx + \frac{\Lambda}{q},\frac{\varepsilon_\Lambda}{16\lambda^n}\right)\right\}\noreason\\
&\leq \frac{1}{\lambda^{2n}} \sum_{q = 1}^{\lambda^n}\#\left\{i = 1,\ldots,\lambda^{2n}: \vv_{i,n} \in B\left(\xx + \frac{\Lambda}{q},\frac{\varepsilon_\Lambda}{16\lambda^n}\right)\right\}.
\end{align*}}
Fix $q = 1,\ldots,\lambda^n$, and by contradiction suppose there exist $1\leq i_1 < i_2 \leq \lambda^{2n}$ such that $B\left(\xx + \vv_{i,n}, \frac{\varepsilon_\Lambda}{16\lambda^n}\right)\cap \frac{\Lambda}{q} \neq \emptyset$. Then there exist $\pp_1,\pp_2\in\Lambda$ so that
\[
\left\|\vv_{i_j,n} + \xx - \frac{\pp_j}{q}\right\| \leq \frac{\varepsilon_\Lambda}{16\lambda^n},
\]
which implies that
\[
\|\vv_{i_1,n} - \vv_{i_2,n}\| - \frac{\varepsilon_\Lambda}{8\lambda^n} \leq \frac{1}{q}\|\pp_1 - \pp_2\| \leq \|\vv_{i_1,n} - \vv_{i_2,n}\| + \frac{\varepsilon_\Lambda}{8\lambda^n}\cdot
\]
By \eqref{separated} we have
\[
\frac{\varepsilon_\Lambda}{4\lambda^n} \leq \|\vv_{i_1,n} - \vv_{i_2,n}\| \leq \frac{\varepsilon_\Lambda}{2\lambda^n},
\]
and so
\[
\frac{\varepsilon_\Lambda}{8\lambda^n} \leq \frac{1}{q}\|\pp_1 - \pp_2\| \leq \frac{5\varepsilon_\Lambda}{8\lambda^n}.
\]
The lower bound implies that $\pp_1 - \pp_2 \neq \0$, so since $\pp_1 - \pp_2 \in \Lambda$ we have $\|\pp_1 - \pp_2\| \geq \varepsilon_\Lambda$; thus
\[
\frac{5\varepsilon_\Lambda}{8\lambda^n} \geq \frac{\varepsilon_\Lambda}{q} \geq \frac{\varepsilon_\Lambda}{\lambda^n},
\]
a contradiction. Thus
\[
\#\left\{i = 1,\ldots,\lambda^{2n}: B\left(\vv_{i,n} + \xx,\frac{\varepsilon_\Lambda}{16\lambda^n}\right)\cap\frac{\Lambda}{q}\neq\emptyset\right\} \leq 1,
\]
and summing over $q = 1,\ldots,\lambda^n$ yields \eqref{q2n}.
\end{subproof}
At this point, the proof follows much the same structure as the proof of Theorem \ref{theoremkhinchinnoncobounded}. For each $n\in\N$, let
\[
\mu_n = \frac{1}{\lambda^{2n}}\sum_{i = 1}^{\lambda^{2n}} \delta_{\vv_{i,n}};
\]
then $\mu_n$ is a compactly supported probability measure on $B(\0,\varepsilon_\Lambda/(4\lambda^n))$. Let
\[
\Sigma:\prod_{n = 1}^\infty B(\0,\varepsilon_\Lambda/(4\lambda^n)) \to X
\]
be defined by \eqref{Sigmadef}, and let $\mu = \Sigma\left[\prod_{n = 1}^\infty \mu_n\right]$. As in the proof of Theorem \ref{theoremkhinchinnoncobounded}, $\mu$ is compactly supported; fix $\vv\in X$, and we will show that $\mu(\WA_\psi + \vv) = 0$.

Fix $n\in\N$; for each sequence $(\xx_j)_1^{n - 1}$, applying Claim \ref{claimatmostone} with $\xx = \sum_{j = 1}^{n - 1}\xx_j - \vv$ shows that
\[
\mu_n\left\{\xx_n:  B\left(\sum_{j = 0}^n \xx_j - \vv,\varepsilon_\Lambda/16\lambda^n\right)\cap \left(\bigcup_{q = 1}^{\lambda^n}\frac{\Lambda}{q}\right)\neq \emptyset\right\} \leq \frac{1}{\lambda^n},
\]
and Fubini and Borel--Cantelli imply that
\[
N = \left\{(\xx_j)_1^\infty:\exists^\infty n\in\N \;\; B\left(\sum_{j = 0}^n \xx_j - \vv,\varepsilon_\Lambda/16\lambda^n\right)\cap \left(\bigcup_{q = 1}^{\lambda^n}\frac{\Lambda}{q}\right)\neq \emptyset\right\}
\]
is a $\prod_{j = 1}^\infty\mu_j$-nullset. So to complete the proof, it suffices to show the following:
\begin{claim}
\label{claimNnfactorialmod}
$\Sigma^{-1}(\WA_\psi + \vv) \subset N$, i.e. if a sequence $(\xx_n)_1^\infty\in\prod_{n = 1}^\infty B(\0,\varepsilon_\Lambda/(4\lambda^n))$ satisfies
\[
\xx = \sum_{n = 0}^\infty \xx_n \in \WA_\psi + \vv,
\]
then there exist infinitely many $n\in\N$ for which
\[
B\left(\sum_{j = 0}^n \xx_j - \vv,\frac{\varepsilon_\Lambda}{16\lambda^n}\right)\cap \left(\bigcup_{q = 1}^{\lambda^n}\frac{\Lambda}{q}\right)\neq \emptyset.
\]
\end{claim}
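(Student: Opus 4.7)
The plan is to mirror the proof of Claim \ref{claimNnfactorial} from the non-cobounded setting, with $\lambda^n$ playing the role of $N_n!$ and the explicit form $\psi_1(q) = 1/q$ replacing the auxiliary bound \eqref{Nndef}. Unpacking $\xx - \vv \in \WA_{\psi_1}$ via Definition \ref{definitionbadlyapproximable}, for each $k \in \N$ there is $\rr_k \in \Q\Lambda$ with $q_k := H_\std(\rr_k)$ satisfying $\|\rr_k - (\xx - \vv)\| < 1/(k q_k)$. After passing to a subsequence I would assume $q_k \to \infty$; the only alternative, $(q_k)$ bounded by some $Q$, would force $(\rr_k)$ to be eventually constant in the discrete set $\tfrac{1}{Q!}\Lambda$ and hence $\xx - \vv \in \Q\Lambda$, in which case the conclusion follows for every sufficiently large $n$ from the tail bound below alone.

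For each $k$ let $n_k$ be the minimal positive integer with $q_k \leq \lambda^{n_k}$, so that $n_k \to \infty$ and $\rr_k \in \bigcup_{q = 1}^{\lambda^{n_k}} \Lambda/q$. The triangle inequality gives
\[
\Bigl\|\rr_k - \sum_{j = 0}^{n_k}\xx_j + \vv\Bigr\| \leq \|\rr_k - (\xx - \vv)\| + \Bigl\|\sum_{j > n_k}\xx_j\Bigr\|.
\]
The second summand is bounded by the geometric tail $\sum_{j > n_k}\varepsilon_\Lambda/(4\lambda^j) = \varepsilon_\Lambda/(4(\lambda - 1)\lambda^{n_k})$, which with $\lambda = 16$ equals $\varepsilon_\Lambda/(60\lambda^{n_k})$, strictly less than the target radius $\varepsilon_\Lambda/(16\lambda^{n_k})$. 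The first summand is at most $\lambda/(k\lambda^{n_k})$ since $q_k > \lambda^{n_k - 1}$ by minimality, and this can be absorbed into the residual gap $\varepsilon_\Lambda/(16\lambda^{n_k}) - \varepsilon_\Lambda/(60\lambda^{n_k})$ once $k$ is large enough. Thus $n := n_k$ witnesses the desired intersection for all large $k$, and since $n_k \to \infty$, infinitely many distinct $n$ arise.

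The only mildly delicate point is the boundedness dichotomy for $(q_k)$, dispatched above by noting that boundedness forces $\xx - \vv \in \Q\Lambda$ and reduces the conclusion to the tail bound. Everything else is a direct geometric computation exploiting precisely the constant $\lambda = 16$ chosen in the setup, so I do not anticipate any further obstacle.
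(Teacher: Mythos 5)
Your proof is correct and follows essentially the same route as the paper's: pick $n_k$ minimal with $q_k\leq\lambda^{n_k}$, split $\bigl\|\sum_{j\leq n_k}\xx_j-\vv-\rr_k\bigr\|$ into the approximation error and the geometric tail, and check both fit inside the radius $\varepsilon_\Lambda/(16\lambda^{n_k})$ for large $k$. Your explicit treatment of the bounded-$(q_k)$ dichotomy is a welcome touch of care where the paper merely asserts that $(n_k)$ is ``clearly unbounded.''
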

\begin{subproof}
Let $(\rr_k)_1^\infty$ be a sequence of rational points whose limit is $\xx - \vv$ and which satisfy
\[
\|\xx - \vv - \rr_k\| \leq \psi(q_k)/k,
\]
where $q_k = H_\std(\rr_k)$.

Fix $k\in\N$, and let $n = n_k$ be minimal so that $\lambda^n\geq q_k$. Then $\lambda^{n - 1} < q_k$, and so,
\[
\psi(q_k) = \frac{1}{q_k} \leq \frac{1}{\lambda^{n - 1}}\cdot
\]
On the other hand, by \eqref{rhodef},
\[
\left\|\sum_{j = n + 1}^\infty\xx_j\right\| \leq \sum_{j = n + 1}^\infty \frac{\varepsilon_\Lambda}{4\lambda^j} \leq \frac{\varepsilon_\Lambda}{2\lambda^{n + 1}}\cdot
\]
Combining the three preceding equations gives
\[
\left\|\sum_{j = 0}^n\xx_j - \vv - \rr_k\right\| \leq \frac{1}{\lambda^{n + 1}} \max(\varepsilon_\Lambda,2\lambda^2/k),
\]
and if $k \geq 2\lambda^2/\varepsilon_\Lambda$, then
\[
\left\|\sum_{j = 0}^n\xx_j - \vv - \rr_k\right\| \leq \frac{1}{\lambda^{n + 1}} \varepsilon_\Lambda = \frac{\varepsilon_\Lambda}{16\lambda}.
\]
i.e.
\[
\rr_k\in B\left(\sum_{j = 0}^n \xx_j - \vv,\frac{\varepsilon_\Lambda}{16\lambda^n}\right)\cap \left(\bigcup_{q = 1}^{\lambda^n}\frac{\Lambda}{q}\right)\cdot
\]
Since the sequence $(n_k)_1^\infty$ is clearly unbounded, this demonstrates that \eqref{rn4LambdaNn} holds for infinitely many $n$.
\end{subproof}
\end{proof}

Next, we deduce Theorem \ref{theoremdirichletcobounded} as a consequence of Theorem \ref{theoremkhinchincobounded}.

\begin{theorem}[Dirichlet-type theorem]
\label{theoremdirichletcobounded}
Fix $\varepsilon > 0$. For every $\xx\in X$ and for every $q\in\N$, there exists $\pp\in\Lambda$ such that
\begin{equation}
\label{dirichletcobounded}
\left\|\xx - \frac{\pp}{q}\right\| \leq \frac{\codiam(\Lambda) + \varepsilon}{q}\cdot
\end{equation}
In particular, the function $\psi_1(q) = 1/q$ is uniformly Dirichlet, and in fact, $\psi_1$ is optimal.
\end{theorem}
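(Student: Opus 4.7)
The plan is to split the theorem into three parts: the inequality \eqref{dirichletcobounded}, the fact that $\psi_1$ is uniformly Dirichlet, and the optimality of $\psi_1$. The first two parts are essentially immediate from the definitions; the optimality will be reduced to Theorem \ref{theoremkhinchincobounded} via Theorem \ref{theoremBAimpliesoptimality}.

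For the inequality, fix $\xx\in X$ and $q\in\N$. By definition of the codiameter, $\dist(q\xx,\Lambda)\leq \codiam(\Lambda)$, so by definition of infimum there exists $\pp\in\Lambda$ with $\|q\xx - \pp\|\leq \codiam(\Lambda) + \varepsilon$. Dividing by $q$ gives \eqref{dirichletcobounded}. The $\varepsilon$ is needed only because we do not know that the infimum in the definition of $\dist(q\xx,\Lambda)$ is attained.

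To see that $\psi_1$ is uniformly Dirichlet, fix $\xx\in X$ and for each $n\in\N$ apply the inequality with $q = n$ to produce $\pp_n\in\Lambda$ with $\rr_n := \pp_n/n\in\Q\Lambda$ satisfying $\|\xx - \rr_n\|\leq (\codiam(\Lambda)+\varepsilon)/n$. Since $n\rr_n = \pp_n\in\Lambda$, we have $H_\std(\rr_n)\leq n$, so
\[
\|\xx-\rr_n\|\leq \frac{\codiam(\Lambda)+\varepsilon}{n}\leq \bigl(\codiam(\Lambda)+\varepsilon\bigr)\psi_1\bigl(H_\std(\rr_n)\bigr),
\]
and the right side of the first inequality tends to $0$, so $\rr_n\tendsto n \xx$. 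The constant $C_\xx = \codiam(\Lambda) + \varepsilon$ is independent of $\xx$, so $\psi_1$ is uniformly Dirichlet.

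Finally, for optimality, we mirror the argument of Theorem \ref{theoremdirichletnoncobounded}, but using Theorem \ref{theoremkhinchincobounded} in place of Theorem \ref{theoremkhinchinnoncobounded}. By Theorem \ref{theoremkhinchincobounded}, $\BA_{\psi_1}$ is prevalent and in particular nonempty. Since $\psi_1$ is nonincreasing, Theorem \ref{theoremBAimpliesoptimality} applies: for any $\phi:(0,+\infty)\to(0,+\infty)$ with $\phi/\psi_1\to 0$, $\phi$ cannot be a Dirichlet function. Hence $\psi_1$ is optimal. The only substantive step here is the appeal to Theorem \ref{theoremkhinchincobounded}, whose proof is the real work; everything else in this theorem is a short packaging exercise.
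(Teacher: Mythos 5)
Your proposal is correct and follows essentially the same route as the paper: the inequality comes directly from $\dist(q\xx,\Lambda)\leq\codiam(\Lambda)$ divided by $q$, uniform Dirichletness follows by taking $q=n$ and noting $H_\std(\pp_n/n)\leq n$, and optimality is deduced from Theorem \ref{theoremkhinchincobounded} (which gives $\BA_{\psi_1}\neq\emptyset$) via Theorem \ref{theoremBAimpliesoptimality}. The paper's proof is merely a terser version of the same argument.
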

\begin{proof}
Note that
\[
\dist(\xx,\Lambda/q) = \frac{1}{q}\dist(q\xx,\Lambda) \leq \frac{\codiam(\Lambda)}{q}\cdot
\]
Thus for every $\varepsilon > 0$, there exists $\pp/q = \pp_q/q \in\Lambda/q$ such that \eqref{dirichletcobounded} holds. Clearly $\pp_q/q\tendsto q \xx$, which demonstrates that $\psi_1$ is uniformly Dirichlet (with $C = \codiam(\Lambda) + \varepsilon$). Finally, Theorem \ref{theoremkhinchincobounded} implies that $\psi_1$ is optimal.
\end{proof}

We conclude by deducing Theorem \ref{theoremjarnikcobounded} as a corollary of Theorem \ref{theoremjarniknoncobounded}.

\begin{theorem}[Jarn\'ik--Besicovitch type theorem]
\label{theoremjarnikcobounded}
For any nonincreasing function $\psi\to 0$, $\HD(\WA_\psi) = +\infty$. In fact, for any nondecreasing function $f:(0,+\infty)\to(0,+\infty)$, $\HH^f(\WA_\psi) = +\infty$.
\end{theorem}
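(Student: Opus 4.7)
The plan is to deduce this theorem from Theorem \ref{theoremjarniknoncobounded}(ii) by showing that under the standing assumptions of this section---infinite-dimensional ambient space and cobounded lattice---the lattice $\Lambda$ is automatically not strongly discrete. Once this is established, part (ii) of Theorem \ref{theoremjarniknoncobounded} applies directly (using the remark that its proof does not require non-coboundedness) and gives both $\HD(\WA_\psi) = +\infty$ and $\HH^f(\WA_\psi) = +\infty$ for any nondecreasing $f$.

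To show that $\Lambda$ is not strongly discrete, I would reuse the Riesz-lemma-style construction from the proof of Theorem \ref{theoremkhinchincobounded}: namely, the existence in the infinite-dimensional Banach space $X$ of an infinite sequence of unit vectors $(\ee_i)_1^\infty$ with $\|\ee_i - \ee_j\|\geq 1$ for $i\neq j$. Let $D = \codiam(\Lambda) < +\infty$ and pick any scale $R > 2D + 1$. For each $i$, coboundedness provides $\pp_i\in\Lambda$ with $\|R\ee_i - \pp_i\|\leq D$. Then by the triangle inequality $\|\pp_i\|\leq R + D$ for every $i$, and
\[
\|\pp_i - \pp_j\| \geq \|R\ee_i - R\ee_j\| - 2D \geq R - 2D > 1 \quad (i\neq j),
\]
so the $\pp_i$ are distinct. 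Hence $\Lambda \cap B(\0, R + D)$ is infinite, contradicting strong discreteness.

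With $\Lambda$ shown to be not strongly discrete, Theorem \ref{theoremjarniknoncobounded}(ii) is directly applicable and yields the desired conclusion. The only delicate point in the plan is verifying that the construction in the proof of (ii) (the choice of $(q_n)$, the bi-Lipschitz-type coding $\pi:S^\N\to X$, and the covering argument) genuinely does not use non-coboundedness anywhere; but the remark preceding that theorem already asserts this, so no extra work is needed.
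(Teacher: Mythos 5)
Your proposal is correct and follows essentially the same route as the paper: the paper likewise reduces the theorem to showing that an infinite-dimensional cobounded lattice cannot be strongly discrete, using the $1$-separated sequence of unit vectors $(\ee_i)_1^\infty$, and then invokes Theorem \ref{theoremjarniknoncobounded}(ii). (The only cosmetic differences are that the paper derives a contradiction via the disjoint balls $B(3C\ee_i/4,C/4)\subset B(\0,C)$ and the codiameter rather than producing the lattice points directly, and that coboundedness only gives $\pp_i$ with $\|R\ee_i-\pp_i\|\le D+\varepsilon$ since the infimum need not be attained --- a harmless adjustment given your slack $R>2D+1$.)
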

\begin{proof}
Since the proof of Theorem \ref{theoremjarniknoncobounded} did not assume that $\Lambda$ was not cobounded, to complete the proof it suffices to show that every infinite-dimensional cobounded lattice is not strongly discrete. Suppose that $\Lambda$ is a cobounded lattice, and let $C = 5\codiam(\Lambda)$. Letting $(\ee_i)_1^\infty$ be a sequence of unit vectors with the property \eqref{separated}, we see that the collection $\big(B(3C\ee_i/4,C/4)\big)_{i = 1}^\infty$ is a disjoint collection of subsets of $B(\0,C)$, and thus if $\Lambda$ is strongly discrete then there exists $i\in\N$ such that $\Lambda\cap B(3C\ee_i/4,C/4) = \emptyset$, which implies that $\codiam(\Lambda)\geq C/4$, a contradiction.
\end{proof}

\ignore{

\draftnewpage\section{The Jarn\'ik--Besicovitch Theorem}

For all $\xx\in\WA_\psi$, there exist infinitely many $q\in\N$ such that
\[
\xx \in \bigcup_{\rr\in\Lambda/q} B(\rr,\varepsilon\psi(q)).
\]
Thus by the Hausdorff-Cantelli lemma
\[
\sum_{q = 1}^\infty f(\varepsilon\psi(q))\#(B(\0,Cq)\cap\Lambda) < +\infty \all C > 0 \;\; \Rightarrow \;\; \HH^f(\WA_\psi) = 0.
\]
Note that by regular Jarn\'ik--Besicovitch, we have $\HH^s(W_c) = +\infty$ for all $s,c > 0$. This is consistent with the above since $\#(B(0,C)\cap\Lambda)\gtrsim_{\times,N} C^N$ for all $N\in\N$. (However, this is essentially the only restriction on $C\mapsto \#(B(0,C)\cap\Lambda)$.)

\begin{theorem}
Suppose that $\Lambda$ is not strongly discrete, i.e. that there exists $C > 0$ for which $\#\{B(\0,C)\cap\Lambda\} = +\infty$. If $\psi:\Rplus\to(0,+\infty)$ is nonincreasing and $f:(0,+\infty)\to(0,+\infty)$ is nondecreasing, then
\[
\HH^f(\WA_\psi) = +\infty.
\]
\end{theorem}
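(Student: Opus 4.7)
The plan is to build a Cantor-like subset of $\WA_\psi$ inside $X$ whose branching at each level is infinite, drawing the branches from $S := \Lambda\cap B(\0,C)$, which is infinite by hypothesis. Infinite branching at every level will force $\HH^f$ to be infinite for any nondecreasing gauge $f$, which is strictly stronger than full Hausdorff dimension. Because Theorem \ref{theoremjarniknoncobounded}(ii) already contains essentially this construction under exactly the same hypothesis (with no cobounded/non-cobounded assumption), the argument transports directly; I would follow its blueprint.

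Set $\varepsilon_\Lambda := \min_{\pp\in\Lambda\butnot\{\0\}}\|\pp\| > 0$, and pick $C > 0$ so that $S := \Lambda \cap B(\0,C)$ is infinite (given). I would inductively choose a rapidly increasing sequence $q_0 = 1 < q_1 < q_2 < \cdots$ with $q_n \mid q_{n+1}$ and with $q_{n+1}$ large enough that
\[
\frac{2C}{q_{n+1}} \leq \min\!\left(\frac{\psi(q_n)}{n},\ \frac{\varepsilon_\Lambda}{3q_n}\right).
\]
The first bound will drive the approximation claim, the second the separation claim. Define
\[
\pi:S^\N\to X,\qquad \pi((\pp_n)_1^\infty) = \sum_{n=1}^\infty \frac{\pp_n}{q_n}.
\]
The series converges since $\|\pp_n/q_n\|\leq C/q_n$ with $q_n$ growing faster than geometrically.

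The first step is to verify $\pi(S^\N)\subset \WA_\psi$. For any $N$, the partial sum $\sum_{n\leq N}\pp_n/q_n$ has standard height at most $q_N$ (since $q_n\mid q_N$ for $n\leq N$), while the tail is bounded by $\sum_{n>N}C/q_n \leq 2C/q_{N+1}\leq \psi(q_N)/N$. Since $\psi$ is nonincreasing, this bounds the tail by $\tfrac{1}{N}\,\psi\circ H_\std$ of the partial sum, so $\pi((\pp_n))\in\WA_\psi$.

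The main step, and the principal obstacle, is the Hausdorff-measure lower bound. For each $n\in\N$ and each initial sequence $(\pp_j)_1^{n-1}\in S^{n-1}$, consider the family of ``cylinders''
\[
C_{n,\pp} := \sum_{j<n}\frac{\pp_j}{q_j} + \frac{\pp}{q_n} + B\!\left(\0,\frac{\varepsilon_\Lambda}{3q_n}\right),\qquad \pp\in S.
\]
Any two distinct $C_{n,\pp}, C_{n,\w\pp}$ differ in the $n$-th summand by a nonzero element of $\Lambda/q_n$ of norm at least $\varepsilon_\Lambda/q_n$, which exceeds the sum of the two radii; hence the $C_{n,\pp}$ are pairwise disjoint. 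Moreover, the bound $2C/q_{n+1}\leq \varepsilon_\Lambda/(3q_n)$ ensures that every point $\pi((\pp_j))$ lies in $C_{n,\pp_n}$ for every $n$. Now suppose for contradiction that $\CC$ is a cover of $\pi(S^\N)$ by sets of diameter $<\varepsilon_\Lambda/3$ that is ``diameter-bounded away from $0$'' by only finitely many sets in each scale band
\[
\CC_n := \{A\in\CC : \varepsilon_\Lambda/(3q_{n+1})\leq \diam(A) < \varepsilon_\Lambda/(3q_n)\}.
\]
Any $A\in\CC_n$ can meet at most one $C_{n,\pp}$ (by disjointness and diameter-comparison), so because $S$ is infinite and $\CC_n$ is finite, I can inductively choose $\pp_n\in S$ with $C_{n,\pp_n}$ disjoint from $\bigcup\CC_n$. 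The resulting $\xx := \pi((\pp_n))$ then belongs to every $C_{n,\pp_n}$ and so is covered by no element of $\CC$, a contradiction. Hence $\CC$ must contain infinitely many sets of uniformly bounded diameter, forcing $\HH^f(\pi(S^\N)) = +\infty$ for every nondecreasing $f$. Taking $f(t)=t^s$ for arbitrary $s$ also gives $\HD(\WA_\psi)=+\infty$.

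The delicate bookkeeping is in the combinatorial step: organizing $\CC$ by dyadic-like scale bands tied to the $q_n$, confirming that the diameter constraint really does limit each $A$ to at most one cylinder, and running the diagonalization against the infinitude of $S$. Everything else is induction and geometric-series bounds.
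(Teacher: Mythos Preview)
Your proposal is correct and follows essentially the same approach as the paper's proof of Theorem~\ref{theoremjarniknoncobounded}(ii): the same inductive choice of $(q_n)$, the same coding map $\pi:S^\N\to X$, the same verification that $\pi(S^\N)\subset\WA_\psi$, and the same scale-band diagonalization to force every cover by small sets to contain infinitely many sets of diameter bounded below. The only slip is the phrase ``infinitely many sets of uniformly bounded diameter''---you mean diameters bounded \emph{below}, since the upper bound is automatic from the $\varepsilon_\Lambda/3$ constraint.
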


}

\ignore{
\draftnewpage\section{Examples}
\label{sectionexamples}

We end this paper with a list of examples and non-examples of lattices in Banach spaces.

\begin{nonexample}
For $1\leq p < +\infty$, $L^p([0,1],\Z)$ is not a lattice in $L^p([0,1],\R)$. (It is not topologically discrete.)
\end{nonexample}

\begin{example}
$L^\infty([0,1],\Z)$ is a cobounded lattice in $L^\infty([0,1],\R)$.
\end{example}

\begin{example}
For $1\leq p < \infty$, $\lb n\ee_n\rb_{n\in\N}$ is a strongly discrete lattice in $\ell^p(\R)$.
\end{example}

\begin{example}
Let $X$ be a Banach space. Let $(\ee_i)_1^\infty$ be a sequence in $X$ whose linear span is dense in $X$. Suppose that there exists a bounded sequence $(\ee_j^*)_1^\infty$ in $X^*$ with the following property:
\[
\ee_j^*[\ee_i] = \delta_{i,j}.
\]
Then $\Lambda := \lb \ee_i\rb_{i = 1}^\infty$ is a lattice in $X$.
\end{example}
\begin{proof}
Condition (II) follows by assumption; to demonstrate (I), suppose that $\pp = \sum_{i\in\N}n_i\ee_i\in\Lambda$, with $n_i\in\Z$, and $n_i = 0$ for all but finitely many $i$. If $\pp\neq 0$, then there exists $i\in\N$ with $n_i\neq 0$; then
\[
1 \leq |n_i| = |\ee_i^*[\pp]| \leq \|\ee_i^*\|\cdot \|\pp\| \leq C\|\pp\|,
\]
where $C = \sup_i \|\ee_i^*\| < +\infty$. Thus $\varepsilon_\Lambda \geq 1/C > 0$, and so $\Lambda$ is discrete.
\end{proof}

\begin{itemize}
\item There exists a cobounded lattice in $\ell^2(\R)$ (see

http://mathoverflow.net/questions/43110/cobounded-cocompact). I believe the argument presented there should work for any uniformly convex Banach space.
\end{itemize}
}

\draftnewpage

\bibliographystyle{amsplain}

\bibliography{bibliography}

\end{document}